\newtheorem{theorem}{Theorem}[section]
\newtheorem{cor}[theorem]{Corollary}
\newtheorem{lem}[theorem]{Lemma}
 \newtheorem{rmk}[theorem]{Remark}
  \newtheorem{example}[theorem]{Example}
\newcommand{\R}{\mathbb{R}}
\newcommand{\C}{\mathbb{C}}
\newcommand{\N}{\mathbb{N}}
\newcommand{\subscript}[2]{$#1 _ #2$}
\title{Ground state solutions for quasilinear Schr\"{o}dinger type equation involving anisotropic p-laplacian}
\author{ Kaushik Bal and Sanjit Biswas }
\date{\today}
\begin{document}
\maketitle
\section*{Abstract}
 This paper is concerned with the existence of a nonnegative ground state solution of the following quasilinear Schr\"{o}dinger equation
\begin{align*}
\begin{cases}
     -\Delta_{H,p}u+V(x)|u|^{p-2}u-\Delta_{H,p}(|u|^{2\alpha}) |u|^{2\alpha-2}u=\lambda |u|^{q-1}u \text{ in }\R^n\\
     u\in W^{1,p}(\R^n)\cap L^\infty(\R^N)
\end{cases}
\end{align*}
where $N\geq2$; $(\alpha,p)\in D_N=\{(x,y)\in \R^2 : 2xy\geq y+1,\; y\geq2x,\; y<N\}$ and $\lambda>0$ is a parameter. The operator $\Delta_{H,p}$ is the reversible Finsler p-Laplacian operator with the function $H$ being the Minkowski norm on $\R^N$. Under certain conditions on $V$, we establish the existence of a non-trivial non-negative bounded ground state solution of the above equation.

\section*{Introduction and main results}\label{int}
In this paper, we are concerned with the following problem
\begin{align}\label{maineq}
   \begin{cases}
     -\Delta_{H,p}u+V(x)|u|^{p-2}u-\Delta_{H,p}(|u|^{2\alpha}) |u|^{2\alpha-2}u=\lambda |u|^{q-1}u \text{ in } \R^n\\
     u\in W^{1,p}(\R^n)\cap L^\infty(\R^N)
   \end{cases}
\end{align}
where $\N\geq 2$, $(\alpha,p)\in D_N=\{(x,y)\in \R^2 : 2xy\geq y+1,\; y\geq2x,\; y<N\}$ and  $\lambda>0$ is a parameter. The operator $\Delta_{H,p}$ is defined as $$\Delta_{H,p}u :=\mbox{div}(H(Du)^{p-1}\nabla_\eta H(Du))$$
known as anisotropic p-Laplacian, where $\nabla_\eta $ denotes the gradient operator with respect to $\eta$ variable. The function $H:\R^N\to [0, \infty)$ is a Minkowski norm satisfying the following properties:
\begin{enumerate}[label=(H\arabic*)]
    \item $H$ is a norm on $\R^N$;
    \item $H\in C^4(\R^N\setminus\{0\})$;
    \item The Hessian matrix $\nabla_\eta^2(\frac{H^2}{2})$ is positive definite in $R^N\setminus\{0\}$;
    \item $H$ is uniformly elliptic, that means the set $$\mathscr{B}_1^H:=\{ \xi\in\R^N : H(\xi)<1 \}$$ is uniformly convex, i.e. there exists $\Lambda>0$ such that $$\langle D^2H(\xi)\eta, \eta\rangle\geq \Lambda|\eta|^2, \text{ } \forall\xi\in\partial\mathscr{B}_1^H, \forall \eta\in\nabla H(\xi)^\perp.$$
    \item There exists a positive constant $M=M(N,H)$ such that for $1\leq i,j\leq N$,
    $$HH_{x_ix_j}+H_{x_i}H_{x_j}\leq M$$
\end{enumerate}
For more details about $H$, one may consult \cite{KB, FC, CX} and the references therein. A few examples of $H$ are as follows:
\begin{example}
    For $k=2$ or $k\geq 4$, we define $H_k:\R^N\to \R$ as
    \begin{align}
        H_k(x_1,x_2,...x_N):=(\sum_{i=1}^{N} |x_i|^k)^\frac{1}{k}.
    \end{align}
\end{example}
\begin{example}(\text{Mezei-Vas} \cite[\text{ Remark 2.2}]{VM})
    For $\rho,\mu>0$, we define $H_{\rho,\mu}:\R^N\to\R$ as 
    \begin{align}
        H_{\rho,\mu}(x_1,x_2,...,x_N):=\sqrt{\rho\sqrt{\sum_{i=1}^{N}x_i^4}+\mu\sum_{i=1}^{N}x_i^2}
    \end{align}
\end{example}
\begin{rmk}\label{equiv}
    There exists two constants $A,B>0$ such that
   \begin{align}
       A\;||x||\leq H(x)\leq B\;||x||, \mbox{ for all} \;x\in\R^N.
   \end{align}
\end{rmk}
\begin{rmk}
    If $H=H_k$ then $$\Delta_{H,p}u=\begin{cases}
        \Delta_pu, \text{ if } k=2\;\mbox{and} \;1<p<\infty\\
        \sum_{i=1}^{N}\frac{\partial}{\partial x_i}(|\frac{\partial u}{\partial x_i}|^{p-2}\frac{\partial u}{\partial x_i}), \text{ if } k=p\in (1,\infty)
    \end{cases}$$
\end{rmk}
 Throughout this paper, we will assume that the potential $V\in C^3(\R^N,\R)$ satisfies the following conditions:
\begin{enumerate}[label=(\subscript{v}{{\alph*}})]
    \item  $0\leq V(x)\leq V(\infty):=\liminf_{|x|\to\infty} V(x)\infty$ and $V$ is not identically equal to $V(\infty)$.
    \item $\langle \nabla V(x), x\rangle \in L^\infty(\R^N)\cup L^{\frac{N}{p}}(\R^N)$ and $NV(x)+\langle \nabla V(x), x\rangle\geq 0$.
\end{enumerate}

The equation (\ref{maineq}) arises in various branches of mathematical physics. For instance, when $H=H_2,\; p=2$ and $\alpha=1$, solutions of (\ref{maineq}) are standing wave solution of the quasilinear Schr\"{o}dinger equation of the form
\begin{align}\label{phy}
    \iota\partial_t\Phi+\Delta\Phi+k\Delta h(|\Phi|^2)h'(|\Phi|^2)\Phi+\rho(|\Phi|^2)\Phi-V\Phi=0
\end{align}
where $V:\R^N\to\R$ is a given potential, $\Phi:\R\times\R^N\to \C$; $h,\rho :\R^+\to \R$ are functions and $k$ is real constant. It is worth mentioning that the semilinear case corresponding to $k=0$ has been extensively studied by many authors ( see \cite{HP, AK, KJ} and the references therein).\\
The general equation (\ref{phy}) with various forms of $h$ has been derived as models of several physical phenomena. 
\begin{enumerate}[label=(\alph*)]
    \item The superfluid film equation in plasma physics has the structure (\ref{phy}) for $h(s)=s$, see \cite{Kur}.
\item For $h(s)=(1+s)^\frac{1}{2}$, equation (\ref{phy}) models the self-channeling of a high-power ultrashort laser in matter (see \cite{BG, BR}).
\end{enumerate}
In recent years, extensive studies have focused on the existence of solutions for quasilinear Schr\"{o}dinger equation of the form  
\begin{align}\label{phy1}
    -\Delta u+V(x)u-k\Delta(u^2)u=g(u)\; \text{in}\; \R^N
\end{align}
where $k>0$ is a constant. The existence of a nonnegative solution for (\ref{phy1}) was proved for $N=1$ and $g(u)=|u|^{p-1}u$ by Poppenberg et al. \cite{PS} and for $N\geq 2$ by Wang et al. \cite{Wang1}. In \cite{Wang2}, Wang and Liu have proved that the equation (\ref{phy1}) for $k=\frac{1}{2}$ and $g(u)=\lambda |u|^{p-1}u$ has a positive ground state solution when $3\leq p<2.2^*-1$ and the potential $V\in C(\R^N,\R)$ satisfies one of the following conditions:
\begin{enumerate}[label=$(v_\alph*)$,start=3]
\item $\lim_{|x|\to\infty} V(x)=\infty.$
\item $V(x)=V(|x|)$ and $N\geq 2.$
\item $V$ is periodic in each variable.
\item $V_\infty:=\lim_{|x|\to\infty}=||V||_{L^\infty(\R^N)}<\infty.$
\end{enumerate}
They also have proved in \cite{Liu} the existence of both one-sign and nodal ground state of soliton type solutions for (\ref{phy1}) when $3\leq p<22^*-1$ and the potential $V\in C(\R^N,\R)$ satisfies 
\begin{enumerate}[label=$(v_\alph*)$,start=7]
\item $0<\inf_{\R^N} V(x)\leq V(\infty):=\lim_{|x|\to\infty} V(x)<\infty.$
\item There are positive constants $M,\;A$ and $m$ such that for $|x|\geq M$, $V(x)\leq V_\infty-\frac{A}{1+|x|^m}.$
\end{enumerate}
Similar work with critical growth has been done in \cite{QW}. Ruiz and Siciliano have proved the existence of a ground-state solution for (\ref{phy1}) with $g(u)=|u|^{p-1}u$, $N\geq 3$, $3\leq p<22^*-1$ under the following assumptions:
\begin{enumerate}[label=$(v_\alph*)$,start=9]
\item $0<V_0\leq V(x)\leq V(\infty):=\lim_{|x|\to\infty} V(x)>\infty$ and $\langle \nabla V(x), x\rangle\in L^\infty(\R^N).$
\item For every $x\in \R^N$, the following map is concave $$s\to s^\frac{N+2}{N+p+1}V(s^\frac{1}{N+p+1}x)$$
\end{enumerate}
In \cite{Chen}, Chen and Xu have proved that the equation (\ref{phy1}) for $g(u)=\lambda |u|^{p-1}u$ has a positive ground state solution for large $\lambda>0$ under the condition $N\geq 3$, $3\leq p<22^*-1$, and the following assumptions on $V\in C(\R^N,\R)$,
\begin{enumerate}[label=$(v_\alph*)$,start=11]
\item $0\leq V(x)\leq V(\infty):=\liminf_{|x|\to\infty} V(x)\infty$ and $V$ is not identically equal to $V(\infty)$.
\item $\langle \nabla V(x), x\rangle \in L^\infty(\R^N)\cup L^{\frac{N}{2}}(\R^N)$ and $NV(x)+\langle \nabla V(x), x\rangle\geq 0$.
\end{enumerate}
We end the literature review by mentioning Chen and Zhang \cite{ZJ}, who proved the existence of a positive ground-state solution of 
\begin{align}
    -\Delta u+V(x)u-k\Delta(u^2)u= A(x)|u|^{p-1}u+\lambda B(x)|u|^{22^*-1}
\end{align}
when $N\geq 3, 2\leq p<22^*-1$ and under the following assumptions:
\begin{enumerate}[label=$(v_\alph*)$,start=13]

\item $V\in C^1(\R^N,\R^+), 0<V_0:=\inf_{x\in\R^N}V(x)\leq V(x)\leq V_\infty:=\lim_{|x|\to\infty}<\infty$ and $V(x)\nequiv V_\infty$;
\item $\langle \nabla V(x),x\rangle\in L^\infty(\R^N)$, $\langle \nabla V(x),x\rangle\leq 0$; 
\begin{itemize}
    \item $A\in C^1(\R^N,\R^+),\; \lim_{|x|\to\infty}A(x)=A_\infty\in (0,\infty), A(x)\geq A_\infty, 0\leq \langle \nabla A(x),\;x\rangle\in L^\infty(\R^N)$
    \item $B\in C^1(\R^N,\R^+), \;\lim_{|x|\to\infty} B(x)=B_\infty\in (0,\infty), B(x)\geq B_\infty, 0\leq \langle \nabla B(x),\;x\rangle\in L^\infty(\R^N)$
\end{itemize}
\end{enumerate}
Before stating our main results we define two sets
\begin{equation}
    \begin{split}
        \Pi=(p-1, 2\alpha p^*-2\alpha q+p-1)\cup (2\alpha p-2\alpha, 2\alpha p^*-2\alpha)\cup (p+2\alpha-2, 2\alpha p^*-2\alpha q+p+2\alpha-2)\\
\cup (\frac{p-1}{2\alpha}+2\alpha-1, p^*-p+\frac{p-1}{2\alpha}+2\alpha-1)\cup (2\alpha p-1, 2\alpha p^*-1).
    \end{split}
\end{equation}
and, $$D^N:=\{(x,y)\in\R^2: 2xy\geq y+1, y\geq 2x+1, y<N\}.$$

Our main results are as follows: 
\begin{theorem}\label{TC}
   Let $V$ be a constant potential, $N\geq 2$, $(\alpha, p)\in D_N$ and $q\in \Pi\cap (p-1, 2\alpha p^*-1)$. Then for large $\lambda>0$, equation (\ref{maineq}) admits a  non-trivial non-negative bounded ground state solution $u\in C^1(\R^N)$.
\end{theorem}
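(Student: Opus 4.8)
The plan is to remove the quasilinear term by a change of variables, run a minimax argument on the resulting semilinear functional, and then bootstrap regularity. With $V$ constant, $(\ref{maineq})$ is formally the Euler--Lagrange equation of
\[
J_\lambda(u)=\frac1p\int_{\R^N}\bigl(1+(2\alpha)^{p-1}|u|^{(2\alpha-1)p}\bigr)H(Du)^{p}+\frac{V}{p}\int_{\R^N}|u|^{p}-\frac{\lambda}{q+1}\int_{\R^N}|u|^{q+1},
\]
which is neither finite nor $C^{1}$ on $W^{1,p}(\R^N)$ because of the weight $|u|^{(2\alpha-1)p}$. Following the dual method of Colin--Jeanjean and Liu--Wang, adapted to $\Delta_{H,p}$, let $f\colon\R\to\R$ be the odd $C^{3}$ solution of
\[
f'(t)=\bigl(1+(2\alpha)^{p-1}|f(t)|^{(2\alpha-1)p}\bigr)^{-1/p},\qquad f(0)=0,
\]
and set $u=f(v)$; the $1$-homogeneity of $H$ and the identity $|f'(v)|^{p}\bigl(1+(2\alpha)^{p-1}|f(v)|^{(2\alpha-1)p}\bigr)=1$ give $J_\lambda(f(v))=I_\lambda(v)$ with
\[
I_\lambda(v)=\frac1p\int_{\R^N}H(Dv)^{p}+\frac{V}{p}\int_{\R^N}|f(v)|^{p}-\frac{\lambda}{q+1}\int_{\R^N}|f(v)|^{q+1}.
\]
First I would record the standard properties of $f$: it is an increasing $C^{3}$-diffeomorphism of $\R$ with $0<f'\le1$, $|f(t)|\le|t|$, $f(t)/t\to1$ as $t\to0$, $f(t)\sim C_{\alpha,p}|t|^{1/2\alpha}\,\mathrm{sgn}\,t$ as $|t|\to\infty$, and $|f(t)|^{(2\alpha-1)p}|f'(t)|^{p}\le1$. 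Using these together with Remark \ref{equiv} and the embedding $W^{1,p}(\R^N)\hookrightarrow L^{r}(\R^N)$ for $p\le r\le p^{*}$ (recall $p<N$ for $(\alpha,p)\in D_N$), one shows that, for $q\in\Pi\cap(p-1,2\alpha p^{*}-1)$, the functional $I_\lambda$ is well defined and of class $C^{1}$ on $W^{1,p}(\R^N)$; the five intervals making up $\Pi$ are exactly the conditions needed for every term produced by the substitution, and every term of $I_\lambda'$, to be integrable over $\R^N$. One also checks that $w$ is a critical point of $I_\lambda$ iff $u=f(w)$ is a weak solution of $(\ref{maineq})$, with matching energies, so it suffices to produce a ground state of $I_\lambda$.

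Next I would set up a minimax scheme for $I_\lambda$. Since $q+1>p$, the nonlinear term is of higher order at the origin, so $I_\lambda$ has the mountain--pass geometry: $I_\lambda(0)=0$, $I_\lambda\ge\rho_0>0$ on a small sphere of $W^{1,p}(\R^N)$, and $I_\lambda(t_0\varphi)<0$ for a fixed $\varphi\neq0$ and $t_0$ large (here the range $\Pi$ and $\lambda$ large are used to ensure, after the substitution, that the nonlinearity dominates along rays and that the mountain--pass level $c_\lambda$ equals the ground--state level $m_\lambda:=\inf\{I_\lambda(w):w\neq0,\ I_\lambda'(w)=0\}$; moreover $c_\lambda\to0$ as $\lambda\to\infty$). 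The mountain--pass theorem then yields a Palais--Smale sequence $(v_n)$ at level $c_\lambda$, which is bounded in $W^{1,p}(\R^N)$ by the growth conditions built into $\Pi$.

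The crux is compactness: because $V$ is constant, $I_\lambda$ is translation invariant and $(v_n)$ need not converge. I would rule out vanishing via Lions' lemma: if $\sup_{y\in\R^N}\int_{B_1(y)}|v_n|^{r}\to0$ for some (hence every) $r\in(p,p^{*})$, then — splitting the integrals over $\{|v_n|\le1\}$ and $\{|v_n|>1\}$ and using $q+1>p$ and $q<2\alpha p^{*}-1$ (so that $(q+1)/2\alpha<p^{*}$) — one gets $\int|f(v_n)|^{q+1}\to0$ and that the $|f(v_n)|^{p}$ term vanishes too, forcing $c_\lambda=0$, a contradiction. Hence $v_n(\cdot+y_n)\rightharpoonup v\neq0$ in $W^{1,p}(\R^N)$ for suitable $y_n\in\R^N$; a Brezis--Lieb / weak-continuity argument yields $I_\lambda'(v)=0$, while weak lower semicontinuity of $v\mapsto\int H(Dv)^{p}$ combined with $I_\lambda'(v)=0$ gives $c_\lambda\le m_\lambda\le I_\lambda(v)\le c_\lambda$. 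Thus $v$ is a ground state of $I_\lambda$ and, through the energy-preserving bijection $u\mapsto f^{-1}(u)$, $u=f(v)$ is a ground-state solution of $(\ref{maineq})$. Running the whole scheme with $\lambda|u|^{q-1}u$ replaced by $\lambda(u^{+})^{q}$ and then testing the $v$-equation with $v^{-}$ — using that $f$ is odd and increasing and that $H(D|v|)=H(Dv)$ a.e. by the reversibility of $H$ — gives $v\ge0$, hence $u=f(v)\ge0$ and $u\not\equiv0$.

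Finally, $q\in\Pi$ keeps the growth of the right-hand side of the $v$-equation strictly subcritical, so a De Giorgi--Nash--Moser iteration for the anisotropic $p$-Laplacian with a subcritical lower-order term gives $v\in L^{\infty}(\R^N)$; therefore $u=f(v)\in W^{1,p}(\R^N)\cap L^{\infty}(\R^N)$, and the local $C^{1,\beta}$-regularity theory for $\Delta_{H,p}$ under (H1)--(H5) upgrades this to $u\in C^{1}(\R^N)$. I expect the compactness step to be the main obstacle: the problem being autonomous, one must at once rule out vanishing, track the translations $y_n$, and show that the weak limit is a \emph{nonzero} critical point of \emph{least} energy. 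The next hardest point is the bookkeeping that turns $(\alpha,p)\in D_N$ and $q\in\Pi\cap(p-1,2\alpha p^{*}-1)$ into the $C^{1}$-regularity of $I_\lambda$ and into a closing Moser iteration; $\lambda$ is taken large precisely so that $c_\lambda$ is small enough for these estimates, and the no-vanishing alternative, to be effective.
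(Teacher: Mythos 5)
Your change of variables, the claimed properties of $f$, and the final regularity bootstrap all match what the paper does, and modulo details they are correct. The gap is in the compactness/variational core. You assert that the mountain-pass theorem yields a Palais--Smale sequence that is ``bounded in $W^{1,p}(\R^N)$ by the growth conditions built into $\Pi$.'' This is not justified and in general false in the range of $q$ you allow: boundedness of PS sequences for $I_\lambda$ via Ambrosetti--Rabinowitz would require $\theta F(v)\le f(v)^q f'(v) v$ for some $\theta>p$, and by property (vi) of $f$ (namely $f(t)\le 2\alpha t f'(t)$) the natural exponent one gets is $\theta=\tfrac{q+1}{2\alpha}$, which exceeds $p$ only when $q>2\alpha p-1$. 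But $\Pi\cap(p-1,2\alpha p^*-1)$ contains values of $q$ well below $2\alpha p-1$ (for instance the first interval of $\Pi$ begins at $p-1$), so AR fails and the unconstrained mountain-pass PS sequence need not be bounded. The related claim that the mountain-pass level equals the ground-state level $m_\lambda$ is also asserted without argument; it is precisely what requires the Poho\u{z}aev manifold machinery.

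The paper circumvents both issues at once, and for the constant-$V$ case does not invoke the mountain-pass theorem at all. It works in the \emph{radial} space $X_r$, introduces the Poho\u{z}aev manifold
\[
M=\Bigl\{v\in X_r\setminus\{0\}:\ \frac{N-p}{p}\!\int H(Dv)^p+\frac{N}{p}\!\int V|f(v)|^p-\frac{\lambda N}{q+1}\!\int |f(v)|^{q+1}=0\Bigr\},
\]
shows $M\neq\emptyset$ and $0\notin\partial M$ (Lemma \ref{NA}), and minimizes $J$ over $M$. On $M$ one has $NJ(v)=\int H(Dv)^p$, so the gradient norm of a minimizing sequence is automatically controlled, and an interpolation inequality together with the Poho\u{z}aev constraint bounds $\int V|f(v_n)|^p$; this gives boundedness of the minimizing sequence without any AR-type condition. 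Compactness is then obtained from the compact embedding of radial functions (Theorem \ref{ST}), not from Lions' vanishing lemma and translation tracking. That the constrained minimizer is a free critical point is exactly the natural-constraint property (Lemma \ref{NA}(iv)), proved by combining the Lagrange multiplier equation with a second Poho\u{z}aev identity. Finally, the paper's strong-convergence step is a scaling/contradiction argument (if the gradient or potential term drops in the limit, one dilates the limit to land in $M$ with strictly smaller $J$), which you do not reproduce. In short: to make your mountain-pass route work you would need either Jeanjean's monotonicity trick or a Poho\u{z}aev-type constraint to get bounded PS sequences, plus an argument that the mountain-pass level is the ground-state level, plus a non-vanishing argument; the paper's choice to minimize on $M$ in $X_r$ delivers all three in one stroke.
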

\begin{theorem}\label{TV}
    Suppose that the potential $V$ satisfies $(v_1)-(v_2)$ and $\N\geq 3$. We also assume $p,\alpha$ satisfy one of the following
    \begin{itemize}
        \item $p=2$, $(\alpha, p)\in D_N$;
        \item $(\alpha, p)\in D^N$
    \end{itemize}
    and $2\alpha p-1\leq q< 2\alpha p^*-1$ (one has to take strict inequality if $2\alpha p-1\notin \Pi$). Then for large $\lambda>0$, equation (\ref{maineq}) admits a non-trivial non-negative bounded ground state solution $u\in C^1(\R^N)$.
\end{theorem}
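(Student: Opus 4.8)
The plan is to remove the quasilinear term by a dual change of variables $u=g(v)$, to work with the resulting $C^{1}$ energy $I$ on $W^{1,p}(\R^{N})$, and to recover a ground state by a concentration--compactness argument whose only failure --- an energy bubble escaping to spatial infinity --- is controlled by the constant--potential equation of Theorem~\ref{TC}. Throughout, $p^{*}:=\tfrac{Np}{N-p}$. \emph{Step~1 (dual reformulation).} Since $H$ is reversible and positively $1$-homogeneous, a direct computation gives $-\Delta_{H,p}(|u|^{2\alpha})\,|u|^{2\alpha-2}u=\tfrac{(2\alpha)^{p-1}}{p}\,\Phi'(u)$ with $\Phi(u)=\int_{\R^{N}}|u|^{(2\alpha-1)p}H(Du)^{p}$, so that (\ref{maineq}) is the Euler--Lagrange equation of
\[
J(u)=\frac1p\int_{\R^{N}}H(Du)^{p}+\frac1p\int_{\R^{N}}V(x)|u|^{p}+\frac{(2\alpha)^{p-1}}{p}\int_{\R^{N}}|u|^{(2\alpha-1)p}H(Du)^{p}-\frac{\lambda}{q+1}\int_{\R^{N}}|u|^{q+1}.
\]
Let $g\in C^{2}(\R)$ be the odd increasing solution of $g'(t)=\bigl(1+(2\alpha)^{p-1}|g(t)|^{(2\alpha-1)p}\bigr)^{-1/p}$, $g(0)=0$; an elementary ODE analysis provides the standard bounds $0<g'\le1$, $g(t)\le t$, $g(t)\le Ct^{1/(2\alpha)}$, $tg'(t)\le g(t)\le C\,tg'(t)$, and $g(\R)=\R$. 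By $1$-homogeneity of $H$, the substitution $u=g(v)$ turns the first and third integrals of $J$ into $\tfrac1p\int H(Dv)^{p}$, so that $J(g(v))=I(v)$ with
\[
I(v)=\frac1p\int_{\R^{N}}H(Dv)^{p}+\frac1p\int_{\R^{N}}V(x)|g(v)|^{p}-\frac{\lambda}{q+1}\int_{\R^{N}}|g(v)|^{q+1}.
\]
The assumptions $2\alpha p-1\le q<2\alpha p^{*}-1$ (i.e.\ $p\le\tfrac{q+1}{2\alpha}<p^{*}$), $q>p-1$, and the stated restrictions on $(\alpha,p)$, together with the bounds on $g$, guarantee $I\in C^{1}(W^{1,p}(\R^{N}))$, and each critical point $v$ of $I$ solves $-\Delta_{H,p}v+V(x)|g(v)|^{p-2}g(v)g'(v)=\lambda|g(v)|^{q-1}g(v)g'(v)$ weakly; since $w\mapsto g(w)$ is a diffeomorphism of $W^{1,p}(\R^{N})$ onto itself, $u=g(v)$ is then a weak solution of (\ref{maineq}). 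Write $I_{\infty}$ for the analogue with $V$ replaced by the constant $V(\infty)$; note $V(\infty)>0$, since otherwise $0\le V\le V(\infty)=0$ would contradict $V\not\equiv V(\infty)$.

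\emph{Step~2 (variational set-up).} Using $g(t)\sim t$ near the origin and the subcriticality $\tfrac{q+1}{2\alpha}<p^{*}$, I would verify that $I$ and $I_{\infty}$ have the mountain--pass geometry (the borderline $q=2\alpha p-1$ being precisely the case requiring the endpoint condition $2\alpha p-1\in\Pi$), with minimax levels $c$ and $c_{\infty}$; since $0\le V\le V(\infty)$ gives $I\le I_{\infty}$ pointwise, $c\le c_{\infty}$. Exploiting the elementary inequalities on $g$ (and $q+1>p$), the fibering map $t\mapsto I(tv)$ has, for each $v\ne0$, a single maximum, so that $c\le\inf\{I(v):v\ne0,\ I'(v)=0\}$ --- every nontrivial critical point of $I$ has energy at least $c$, and likewise for $I_{\infty}$ and $c_{\infty}$. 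The Pohozaev identity for solutions, needed below, is obtained from the rescaling $v\mapsto v(\cdot/t)$, whose differentiation uses $\langle\nabla V(x),x\rangle\in L^{\infty}\cup L^{N/p}$ and whose monotone behaviour is governed by $NV(x)+\langle\nabla V(x),x\rangle\ge0$, i.e.\ $(v_{2})$.

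\emph{Step~3 (compactness) and Step~4 (strict inequality).} Let $(v_{n})$ be a Palais--Smale sequence for $I$ at level $c$. Combining $I(v_{n})\to c$ with the Nehari--type identity obtained by testing the equation for $v$ with $g(v)/g'(v)$ (equivalently, the original equation with $u$) and the inequalities on $g$, one bounds $\|v_{n}\|_{W^{1,p}}$ --- the alternative ``$p=2$'' versus ``$(\alpha,p)\in D^{N}$'' being used to keep the associated form coercive when $p\ne2$. Up to a subsequence $v_{n}\rightharpoonup v$ with $I'(v)=0$, and a Brezis--Lieb / concentration--compactness splitting for the anisotropic $p$-energy and for the nonlinearities $|g(\cdot)|^{p}$, $|g(\cdot)|^{q+1}$ --- the same analysis underpinning the constant--potential Theorem~\ref{TC} --- yields the dichotomy: either $v_{n}\to v$ strongly in $W^{1,p}(\R^{N})$, or there are $(y_{n})$ with $|y_{n}|\to\infty$ and a nontrivial critical point $\psi$ of $I_{\infty}$ with $c=\lim I(v_{n})\ge I(v)+I_{\infty}(\psi)\ge 0+c_{\infty}$ (using $I(v)\ge0$, since $v$ is $0$ or a nontrivial critical point of $I$, and $I_{\infty}(\psi)\ge c_{\infty}$). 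To exclude the escape alternative I would prove $c<c_{\infty}$: by Theorem~\ref{TC} applied with the constant potential $V(\infty)$, $I_{\infty}$ possesses a non-negative ground state $w$, which (strong maximum principle) we may take $>0$ a.e.; since $I\le I_{\infty}$ and the fibering map of $I_{\infty}$ along $w$ descends below $0$, the ray $t\mapsto tw$ is an admissible mountain--pass path for $I$, so $c\le\sup_{t\ge0}I(tw)=I(t_{0}w)$ for some $t_{0}>0$, and because $g$ is odd and injective ($|g(t_{0}w)|^{p}>0$ a.e.) with $V(\infty)-V\ge0$, $\not\equiv0$ from $(v_{1})$,
\[
c\le I(t_{0}w)=I_{\infty}(t_{0}w)-\frac1p\int_{\R^{N}}\bigl(V(\infty)-V(x)\bigr)|g(t_{0}w)|^{p}<I_{\infty}(t_{0}w)\le\sup_{t\ge0}I_{\infty}(tw)=c_{\infty},
\]
the last equality because $w$ is a ground state of $I_{\infty}$. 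Hence the escape alternative is impossible, $v_{n}\to v$ strongly, $v\ne0$ (since $I(v)=c>0$), $I'(v)=0$ and $I(v)=c$, so $v$ is a ground state of $I$. Replacing $v$ by $|v|$ --- legitimate because $H$ is reversible, whence $H(D|v|)^{p}=H(Dv)^{p}$ a.e., and $|g(|v|)|=|g(v)|$ --- we may assume $v\ge0$, and then $v>0$ by the anisotropic Harnack inequality. Therefore $u=g(v)$ is a non-trivial non-negative ground state of (\ref{maineq}); a Moser iteration on the transformed equation (licit since $\tfrac{q+1}{2\alpha}<p^{*}$) gives $v\in L^{\infty}(\R^{N})$, hence $u=g(v)\in L^{\infty}(\R^{N})$, and the $C^{1,\beta}_{\mathrm{loc}}$ regularity theory for $-\Delta_{H,p}$ with bounded right--hand side (see \cite{KB, FC, CX}) yields $u\in C^{1}(\R^{N})$.

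\emph{Main obstacle.} The hard part will be the splitting lemma of Step~3 in the quasilinear anisotropic framework --- tracking precisely how the mass and the energy of a Palais--Smale sequence are shared between a fixed weak limit and a single profile escaping to infinity through the $v$-dependent nonlinearities $|g(\cdot)|^{p}$ and $|g(\cdot)|^{q+1}$ --- hand in hand with the boundedness of Palais--Smale sequences and the fibering analysis of Step~2, which is where the endpoint condition $2\alpha p-1\in\Pi$ (in the case $q=2\alpha p-1$) and the case distinction on $(\alpha,p)$ are really used. Once those structural facts and Theorem~\ref{TC} are in hand, the strict inequality $c<c_{\infty}$, and with it the theorem, follows quickly.
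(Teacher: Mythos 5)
Your overall frame (dual change of variables, comparison with the constant--potential problem at infinity, splitting/concentration--compactness, strict inequality $c<c_\infty$ via Theorem~\ref{TC}) matches the paper's, but there is a genuine gap at the point you yourself flag only in passing: the boundedness of a Palais--Smale sequence for $I$ at the mountain--pass level. A PS sequence does not solve the equation, so it satisfies no Poho\v{z}aev identity, and the direct combination of $I(v_n)\to c$ with the Nehari-type quantity $\langle I'(v_n),g(v_n)/g'(v_n)\rangle$ degenerates exactly at the admissible endpoint $q=2\alpha p-1$ (all coefficients vanish), while for $q>2\alpha p-1$ it only controls $\int H(Dv_n)^p+\int V|g(v_n)|^p$; since $|g(v)|^p$ grows like $|v|^{p/(2\alpha)}$ at infinity and $V$ is merely $\ge 0$ (it may vanish), this does not bound the $W^{1,p}$-norm by a homogeneity argument. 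This is precisely why the paper does not take a PS sequence of $J$ directly: it runs Jeanjean's monotonicity trick (Theorem~\ref{MP}) on the perturbed family $J_\delta$, obtains bounded PS sequences for a.e.\ $\delta$, converts them into critical points $v_\delta$ with $J_\delta(v_\delta)=C_\delta$ using the global compactness lemma~\ref{GCL} together with the strict inequality $C_\delta<m_{\infty,\delta}$ (Lemmas~\ref{SL}, \ref{LM}, the latter resting on Theorem~\ref{TC}), and only then sends $\delta_n\nearrow 1$: because the $v_{\delta_n}$ are genuine critical points, they satisfy the Poho\v{z}aev identity, and it is the combination of that identity with $(v_2)$ (the term $NV+\langle\nabla V,x\rangle\ge 0$) and the left continuity of $\delta\mapsto C_\delta$ that yields a \emph{bounded} $(PS)_{C_1}$ sequence for $J$. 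In your write-up $(v_2)$ only appears as a technical hypothesis for differentiating the rescaling, which misses its actual role; without the monotonicity-trick detour (or an equally effective substitute) your Step~3 does not get off the ground.

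Two further points where your route diverges from what can actually be justified here. First, you use straight rays $t\mapsto tv$ both to claim $c\le\inf\{I(w):w\ne 0,\,I'(w)=0\}$ and to identify $\sup_{t}I_\infty(tw)=c_\infty$ in the strict-inequality argument; because $g$ is not homogeneous, uniqueness of the critical point of $t\mapsto I(tv)$ is unclear, and the paper instead works with the dilation path $v(\cdot/t)$ and the Poho\v{z}aev manifold (Lemmas~\ref{NA}, \ref{AL}, \ref{SL}), for which the single-maximum property is proved; the ground-state property itself is then obtained by a second minimization over the set of all nontrivial critical points (Step~2 of the paper's proof), again requiring the $(v_2)$--Poho\v{z}aev boundedness mechanism. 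Second, the hypothesis ``$p=2$, $(\alpha,p)\in D_N$ or $(\alpha,p)\in D^N$'' is not about coercivity: it is what makes the Brezis--Lieb-type splitting of the nonhomogeneous term $|f|^{p-2}ff'$ (Lemma~\ref{BL}, used inside the global compactness Lemma~\ref{GCL}) work, so your splitting step would also need this input made explicit.
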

\begin{rmk}
If $H=H_2$, $p=2$, and $\alpha=1$ then by using the strong maximum principle for Laplacian in Theorem \ref{TC} and Theorem \ref{TV}, we obtain a positive ground state solution of (\ref{maineq}), which generalize the main results of Chen et al. \cite{Chen}.
\end{rmk}
The paper is organized as follows. In section \ref{Pre}, we reformulate this problem in an appropriate Orlicz space and discuss a few useful lemmas. In section \ref{Aux}, we prove some auxiliary results. Section \ref{TCP} is devoted to the proof of Theorem \ref{TC}. Finally, in section \ref{TVP}, we will give the proof of Theorem \ref{TV}.  \\
\textbf{Notation:} In this work we will use the following notations:
\begin{itemize}
    \item $C$ represents a positive constant whose value may change from line to line.
    \item $W^{1,p}(\R^N):=\{ u\in L^p(\R^N) : \nabla u\in L^p(\R^N)\}$ with the usual norm $$||u||_{1,p,\R^N}^p=\int_{\R^N} [|u|^p+|\nabla u|^p] dx$$
    \item $D^{1,p}(\R^N):=\{ u\in L^{p^*}(\R^N) : \nabla u\in L^p(\R^N)\}$ with the norm $$||u||^p:=\int_{\R^N} |\nabla u|^p dx,$$ where $p^*=\frac{Np}{N-p}$.
    \item For a function $h\in L^1_{loc}(\R^N)$, we denote $$\int h(x) dx:=\int_{\R^N} h(x) dx.$$
    \item $C_c^\infty(\R^N):=\{ u\in C^\infty(\R^N) |\; u \text{ has compact support.}\}$
    \item $X'$ denotes the dual of $X$ and $\langle\cdot , \cdot \rangle$ denotes the duality relation.
    \item For $\Omega\subset\R^N$,\;$|\Omega|$ denotes the Lebesgue measure of $\Omega$.
    \item o(1) represents a quantity which tends to 0 as $n\to\infty$.
    \item The symbols $\rightharpoonup$ and $\rightarrow$ denote weak convergence and strong convergence respectively.
\end{itemize}
\section{ Variational Framework and Preliminaries}\label{Pre}
The variational form of the equation (\ref{maineq}) is 
 \begin{align*}
     I(u)=\frac{1}{p}\int [1+(2\alpha)^{p-1}|u|^{(2\alpha-1)p}]H(Du)^p+ V(x)|u|^p]dx-\frac{\lambda}{q+1}\int |u|^{q+1} dx
 \end{align*}
which is not well-defined on $W^{1,p}(\R^N)$. Inspired by  \cite{Wang2, Wang1, Chen}, we choose a transformation $u=f(v)$ where $f$ is defined as follows:
\begin{align}\label{DE}
  \begin{cases}
     f'(t)=[1+(2\alpha)^{p-1}|f(t)|^{(2\alpha-1)p}]^{-\frac{1}{p}},\; t>0\\
    f(0)=0 \text{ and }f(-t)=-f(t) \text{ for all $t\in\R$ }
\end{cases}
\end{align}
Under the transformation $u=f(v)$, the above functional becomes
\begin{align}\label{TF}
     I(f(v))=\frac{1}{p}\int [H(Dv)^p+ V(x)|f(v)|^p]dx-\frac{\lambda}{q+1}\int |f(v)|^{q+1} dx.
 \end{align}
 We give some important properties of $f$, which will be useful for establishing our main results.
\begin{lem}\label{P}
The function $f$ enjoys the following properties:
\begin{enumerate}[label=(\roman*)]
    \item $f$ is uniquely defined, a $C^2-$function, and invertible.
    \item $|f(t)|\leq |t|$ for all $t\in\R$.
    \item $\frac{f(t)}{t}\to 1$ as $t\to 0$.
    \item There exists $a>0$ such that $f(t)t^{-\frac{1}{2\alpha}}\to a$ as $t\to\infty$.
    \item $|f(t)|\leq (2\alpha)^\frac{1}{2\alpha p}|t|^\frac{1}{2\alpha}$ for all $t\in\R^.$
    \item $f(t)\leq 2\alpha t f'(t)\leq2\alpha f(t)$ for all $t\geq 0.$
    \item There exists $C>0$ such that
         $$f(t)\geq \begin{cases}
             C|t|,\; \text{ if }\; |t|\leq 1\\ C|t|^\frac{1}{2\alpha},\; \text{ if }\; |t|\geq 1
         \end{cases}$$
         \item  $|f|^p$ is convex if and only if $p\geq 2\alpha.$
    \item $|f^{2\alpha-1}(t)f'(t)|\leq (\frac{1}{2\alpha})^\frac{p-1}{p}$, for all $t\in\R.$
    \item  There exist two positive constants $M_1$ and $M_2$ such that $$|t|\leq M_1|f(t)|+M_2|f(t)|^{2\alpha}, \text{ for all $t\in \R$}.$$
\end{enumerate}
\end{lem}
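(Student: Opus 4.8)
The plan is to do everything on the half-line: by the parity $f(-t)=-f(t)$ it suffices to analyse $f$ on $[0,\infty)$, where $f\ge 0$ and the defining relation reads $f'(t)=\bigl(1+(2\alpha)^{p-1}f(t)^{(2\alpha-1)p}\bigr)^{-1/p}$. The key structural fact is that $(\alpha,p)\in D_N$ forces $2\alpha p\ge p+1$, hence $2\alpha>1$ and $(2\alpha-1)p\ge 1$; both are used throughout. Since the right-hand side of the ODE is locally Lipschitz in the $f$-variable and uniformly bounded (indeed $0<f'\le 1$, the bracket being $\ge 1$), the Picard--Lindel\"of theorem yields a unique, globally defined $C^1$ solution; $f'>0$ makes it strictly increasing, hence invertible, and differentiating $(f')^{-p}=1+(2\alpha)^{p-1}f^{(2\alpha-1)p}$ gives $f''=-(2\alpha-1)(2\alpha)^{p-1}f^{(2\alpha-1)p-1}(f')^{p+2}$, continuous since $(2\alpha-1)p-1\ge 0$; this is (i). From $f'\le 1$ and $f(0)=0$ we get $f(t)=\int_0^t f'\le t$, which is (ii), and $f'(0)=(1+0)^{-1/p}=1$ gives $f(t)/t\to f'(0)=1$, i.e.\ (iii).

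Next I would treat (v) and (iv) together through $g:=f^{2\alpha}$ on $[0,\infty)$. One computes
\[
g'=2\alpha f^{2\alpha-1}f'=2\alpha f^{2\alpha-1}\bigl(1+(2\alpha)^{p-1}f^{(2\alpha-1)p}\bigr)^{-1/p}\le 2\alpha f^{2\alpha-1}\bigl((2\alpha)^{p-1}f^{(2\alpha-1)p}\bigr)^{-1/p}=(2\alpha)^{1/p},
\]
so $g(t)\le (2\alpha)^{1/p}t$, i.e.\ $f(t)\le (2\alpha)^{1/(2\alpha p)}t^{1/(2\alpha)}$, which is (v). For (iv): if $f$ stayed bounded, $f'$ would be bounded below by a positive constant, forcing $f(t)\to\infty$, a contradiction; hence $f(t)\to\infty$, and the displayed identity then shows $g'(t)\to(2\alpha)^{1/p}$ as $t\to\infty$. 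By l'H\^opital, $g(t)/t\to(2\alpha)^{1/p}$, whence $f(t)t^{-1/(2\alpha)}=(g(t)/t)^{1/(2\alpha)}\to a:=(2\alpha)^{1/(2\alpha p)}>0$.

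The estimates in (vi) are the technical core. Since $f''<0$, $f$ is concave on $[0,\infty)$, and with $f(0)=0$ the tangent at $t$ lying above the graph at $0$ gives $tf'(t)\le f(t)$, i.e.\ the right-hand inequality $2\alpha tf'(t)\le 2\alpha f(t)$. For the left-hand inequality $f(t)\le 2\alpha tf'(t)$ I would pass to the inverse: with $\phi(u):=\bigl(1+(2\alpha)^{p-1}u^{(2\alpha-1)p}\bigr)^{1/p}$ one has $(f^{-1})'(u)=1/f'(f^{-1}(u))=\phi(u)$, hence $t=\int_0^{f(t)}\phi(u)\,du$, and the claim becomes $2\alpha\int_0^s\phi(u)\,du\ge s\,\phi(s)$ with $s=f(t)$. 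Setting $\Psi(s):=2\alpha\int_0^s\phi-s\phi(s)$, we have $\Psi(0)=0$ and $\Psi'(s)=(2\alpha-1)\phi(s)-s\phi'(s)$; since $s\phi'(s)=(2\alpha-1)(2\alpha)^{p-1}s^{(2\alpha-1)p}\phi(s)^{1-p}\le(2\alpha-1)\phi(s)^{p}\phi(s)^{1-p}=(2\alpha-1)\phi(s)$ (using $(2\alpha)^{p-1}s^{(2\alpha-1)p}\le\phi(s)^p$), we get $\Psi'\ge 0$, so $\Psi\ge 0$. This is precisely where $2\alpha-1>0$ is essential.

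With (vi) available the rest is short bookkeeping. For (vii): for $0<t\le 1$, concavity and $f(0)=0$ make $t\mapsto f(t)/t$ nonincreasing, so $f(t)\ge f(1)t$; for $t\ge 1$, the left inequality of (vi) gives $(\log f)'=f'/f\ge 1/(2\alpha t)$, and integrating from $1$ to $t$ gives $f(t)\ge f(1)t^{1/(2\alpha)}$; so $C=f(1)>0$ works. For (ix), $f^{2\alpha-1}f'=f^{2\alpha-1}\bigl(1+(2\alpha)^{p-1}f^{(2\alpha-1)p}\bigr)^{-1/p}\le f^{2\alpha-1}\bigl((2\alpha)^{p-1}f^{(2\alpha-1)p}\bigr)^{-1/p}=(2\alpha)^{-(p-1)/p}$. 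For (x), the right inequality of (vi) gives $t\le f(t)/f'(t)=f(t)\bigl(1+(2\alpha)^{p-1}f(t)^{(2\alpha-1)p}\bigr)^{1/p}\le f(t)+(2\alpha)^{(p-1)/p}f(t)^{2\alpha}$ by subadditivity of $x\mapsto x^{1/p}$ ($p\ge 1$), so $M_1=1$, $M_2=(2\alpha)^{(p-1)/p}$. Finally, for (viii) a direct computation yields on $(0,\infty)$
\[
(f^p)''=pf^{p-2}(f')^2\Bigl[(p-1)-(2\alpha-1)\frac{(2\alpha)^{p-1}f^{(2\alpha-1)p}}{1+(2\alpha)^{p-1}f^{(2\alpha-1)p}}\Bigr],
\]
and since the fraction ranges over $[0,1)$ with supremum $1$, the bracket is $\ge 0$ for every $f>0$ exactly when $p-1\ge 2\alpha-1$, i.e.\ $p\ge 2\alpha$ (for $p<2\alpha$ it is negative for large $f$); parity and continuity at $0$ promote convexity on $(0,\infty)$ to convexity on $\R$. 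The main obstacle is the left inequality in (vi) — identifying the right monotone quantity $\Psi$ — while a minor caveat concerns the $C^2$ claim in (i): $f$ is $C^2$ on $\R\setminus\{0\}$ in all cases and $C^2$ on all of $\R$ when $(2\alpha-1)p>1$; on the boundary $(2\alpha-1)p=1$ one has only $C^1$ with bounded one-sided second derivatives, which is all that is used in the sequel.
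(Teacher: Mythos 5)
Your proposal is correct, and for the items the paper actually proves it follows essentially the same route: for (viii) you derive the same expression for $(|f|^p)''$ (your bracketed factor is the paper's $(p-1)+(p-2\alpha)(2\alpha)^{p-1}|f|^{p(2\alpha-1)}$ divided by $1+(2\alpha)^{p-1}|f|^{p(2\alpha-1)}$) and settle the ``only if'' direction by the same limiting argument as $f\to\infty$, and your one-line bound for (ix) is the paper's remark that (ix) ``follows from the definition of $f'$'' made explicit. The genuine difference is in scope and in (x) and (i)--(vii): the paper cites \cite{FG} for (i)--(vii) and dispatches (x) as ``an immediate consequence of (iv) and (v),'' whereas you give a self-contained treatment --- Picard--Lindel\"of for (i), the substitution $g=f^{2\alpha}$ with $g'\le(2\alpha)^{1/p}$ for (v) and l'H\^opital for (iv) (which moreover identifies $a=(2\alpha)^{1/(2\alpha p)}$), the inverse-function/monotone-quantity argument via $\Psi(s)=2\alpha\int_0^s\phi-s\phi(s)$ for the left inequality in (vi), and a direct derivation of (x) from the right inequality in (vi) with explicit constants $M_1=1$, $M_2=(2\alpha)^{(p-1)/p}$, rather than the paper's appeal to the asymptotics (iv)--(v). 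All of these steps check out (I verified the formula for $f''$, the bound $s\phi'(s)\le(2\alpha-1)\phi(s)$, and the constants in (v), (ix), (x)). Your caveat on (i) is also a legitimate observation rather than a gap: on the boundary $2\alpha p=p+1$ of $D_N$ one has $(2\alpha-1)p=1$ and the one-sided limits of $f''$ at $0$ are $\mp(2\alpha-1)(2\alpha)^{p-1}$, so $f$ is only $C^1$ at the origin (and $C^2$ elsewhere), a refinement the paper glosses over by citation; since only $f'$, and $f''$ away from $0$, are used later, this does not affect the sequel.
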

\begin{proof}
    The proof of the first seven properties can be found in \cite{FG}. To prove property $(viii)$ we define $\phi:\R\to\R$ by $\phi(t)=|f(t)|^p$. It is easy to check that, 
    \begin{enumerate}[label=\roman*)]
        \item $\phi'(t)=p|f(t)|^{p-2}f(t)f'(t)$ 
        \item $\phi"=p|f|^{p-2}[ff"+(p-1)f'^2].$
    \end{enumerate}
Now, $\phi$ is convex if and only if 
\begin{align}
    ff"+(p-1)f'^2=\frac{(p-1)+(p-2\alpha)(2\alpha)^{p-1}|f|^{p(2\alpha-1)}}{[1+(2\alpha)^{p-1}|f|^{p(2\alpha-1)}]^{1+\frac{2}{p}}}\geq 0.
\end{align}
Moreover, $ff"+(p-1)f'^2\geq 0$ if and only if $(p-1)+(p-2\alpha)(2\alpha)^{p-1}|f|^{p(2\alpha-1)}\geq 0$. Hence, we can conclude that if $p\geq 2\alpha$ then $\phi$ is convex. Conversely, if $\phi$ is convex then $$(p-1)+(p-2\alpha)(2\alpha)^{p-1}|f(t)|^{p(2\alpha-1)}\geq 0 , \text{ for all } t>0.$$ Therefore,
\begin{align*}
\frac{(p-1)+(p-2\alpha)(2\alpha)^{p-1}|f|^{p(2\alpha-1)}}{t^\frac{p(2\alpha-1)}{2\alpha}}\geq 0.
\end{align*}
Take $t\to \infty$, we have
\begin{align*}
    \lim_{t\to \infty}[(p-1)t^{-\frac{p(2\alpha-1)}{2\alpha}}+(p-2\alpha)(2\alpha)^{p-1}(|f(t)|{t^{-\frac{1}{2\alpha}}})^{p(2\alpha-1)}]\geq 0
\end{align*}
Using the fact $2\alpha\geq \frac{p+1}{p}$ and the property (iv), we have 
$(p-2\alpha)(2\alpha)^{p-1}a^{p(2\alpha-1)}\geq 0$ that is, $p\geq 2\alpha$.\\
The definition of $f'$ follows the property (ix) and property (x) is an immediate consequence of properties (iv) and (v).  
\end{proof}
Now, we are going to define a suitable space so that RHS of (\ref{TF}) makes sense. We define a normed space $X:=\{v\in W^{1,p}(\R^N): \int V(x)|f(v)|^p\;dx< \infty\}$ equipped with the norm 
\begin{align}\label{n1}
    ||v||=||H(Dv)||_{L^p(\R^N)}+\inf_{\eta>0} \frac{1}{\eta}\{1+\int V(x)|f(\eta v)|\; dx\}.
\end{align}
The space $X_r=\{v\in X: \text{ v is radial }\}$ is a subspace of $X$.
\begin{rmk}  The following inequality holds true
\begin{align}\label{n2}
     ||v||\leq 1+||H(Dv)||_{L^p(\R^N)}+\int V(x)|f(v)|\; dx
\end{align}
\end{rmk}

\begin{lem}\label{LL}
\begin{enumerate}[label=\roman*)]
The following holds:
    \item There exists a positive constant $C$ such that for all $v\in X$ $$\frac{\int V(x)|f(v)|^p dx}{1+[\int V(x)|f(v)|^p]^\frac{p-1}{p}}\leq C[||\nabla v||_{L^p(\R^N)}^{p^*}+\inf_{\xi>0}\frac{1}{\xi}(1+\int V(x)|f(\xi v)|^p dx)].$$
    \item If $v_n\rightarrow v$ in $X$ then  $$\int V(x)|f(v_n)-f(v)|^p dx\rightarrow 0$$ and $$\int V(x)||f(v_n)|^p-|f(v)|^p| dx\rightarrow 0.$$
    \item If $\int V(x)|f(v_n-v)|^p dx\rightarrow 0$ then 
$$\inf_{\xi>0}\frac{1}{\xi}[1+\int V(x)|f(\xi(v_n-v))|^p dx]\rightarrow 0.$$
\end{enumerate}
\end{lem}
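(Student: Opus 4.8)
The plan is to read off all three statements from Lemma~\ref{P} together with the Sobolev embedding $W^{1,p}(\R^N)\hookrightarrow L^{p^*}(\R^N)$, available since $p<N$. Write $\Psi(w):=\int V(x)|f(w)|^p\,dx$ and $\rho(w):=\inf_{\xi>0}\frac1\xi\bigl(1+\Psi(\xi w)\bigr)$, so that the second summand of the norm $\|\cdot\|$ is $\rho(v)$ and, since $0\le V\le V(\infty)<\infty$, one has $\Psi(w)\le V(\infty)\int|f(w)|^p\,dx$. Two structural facts about $f$ do most of the work. First, $f$ is concave on $[0,\infty)$ with $f(0)=0$ (because $f'$ is nonincreasing, by \eqref{DE} and Lemma~\ref{P}$(i)$), hence $f$ is subadditive there and $|f(\xi t)|\le\xi|f(t)|$ for $\xi\ge1$. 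Second, $|f|^p$ is convex — Lemma~\ref{P}$(viii)$, which applies because $(\alpha,p)\in D_N$ (or $\in D^N$) forces $p\ge2\alpha$ — so that $|f(t)|^p\le\frac1\xi|f(\xi t)|^p$ for $\xi\ge1$. Finally, $\rho$ is positively homogeneous of degree one: $\rho(tw)=t\rho(w)$ for $t>0$, by the substitution $\mu=\xi t$ in the infimum.

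For $(i)$ I would prove the stronger bound $\tfrac{\Psi(v)}{1+\Psi(v)^{(p-1)/p}}\le\rho(v)$. If $\rho(v)<1$: given small $\varepsilon>0$ with $\rho(v)+\varepsilon<1$, choose $\xi$ with $\frac1\xi(1+\Psi(\xi v))<\rho(v)+\varepsilon$; since the left side is $\ge\frac1\xi$, this forces $\xi>1$, and then convexity of $|f|^p$ gives $\Psi(v)\le\frac1\xi\Psi(\xi v)\le\frac1\xi(1+\Psi(\xi v))<\rho(v)+\varepsilon$, whence $\Psi(v)\le\rho(v)$ and a fortiori $\tfrac{\Psi(v)}{1+\Psi(v)^{(p-1)/p}}\le\Psi(v)\le\rho(v)$. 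If $\rho(v)\ge1$: writing $v=\rho(v)u$ with $\rho(u)=1$, a limiting case of the previous step yields $\Psi(u)\le1$, and concavity of $f$ then gives $\Psi(v)=\Psi(\rho(v)u)\le\rho(v)^p\Psi(u)\le\rho(v)^p$, so $\tfrac{\Psi(v)}{1+\Psi(v)^{(p-1)/p}}\le\Psi(v)^{1/p}\le\rho(v)$. In either case the claimed inequality follows (the $\|\nabla v\|_{L^p}^{p^*}$ term is not even needed; if one wishes to keep it, the large-$v$ part may instead be absorbed there via $|f(v)|^p\le(2\alpha)^{\frac1{2\alpha}}|v|^{\frac p{2\alpha}}$, $\frac p{2\alpha}\le p^*$, and the Sobolev inequality).

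For $(iii)$, put $w_n=v_n-v$ and assume $\Psi(w_n)\to0$ (the case $\Psi(w_n)=0$ being trivial); set $\xi_n=\Psi(w_n)^{-1/p}\ge1$ for large $n$. Concavity of $f$ gives $\Psi(\xi_n w_n)\le\xi_n^p\Psi(w_n)=1$, hence $\rho(w_n)\le\frac1{\xi_n}\bigl(1+\Psi(\xi_n w_n)\bigr)\le2\Psi(w_n)^{1/p}\to0$. For $(ii)$, convergence $v_n\to v$ in $X$ gives $\|H(D(v_n-v))\|_{L^p}\to0$, hence $\nabla v_n\to\nabla v$ in $L^p$ by Remark~\ref{equiv}, and $\rho(v_n-v)\to0$; applying $(i)$ to $w_n=v_n-v$ yields $\Psi(w_n)\to0$, while the Sobolev inequality turns $\nabla w_n\to0$ in $L^p$ into $w_n\to0$ in $L^{p^*}(\R^N)$, so $|\{|w_n|>1\}|\to0$. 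Split the target integral: on $\{|w_n|\le1\}$, $|f(v_n)-f(v)|\le|w_n|\le C|f(w_n)|$ (by $|f'|\le1$ and Lemma~\ref{P}$(vii)$), so this piece is $\le C\Psi(w_n)\to0$; on $\{|w_n|>1\}$, monotonicity and subadditivity of $f$ give $|f(v_n)|\le|f(v)|+|f(w_n)|$, hence $\int_{\{|w_n|>1\}}V|f(v_n)-f(v)|^p\le C\int_{\{|w_n|>1\}}V|f(v)|^p+C\Psi(w_n)$, and the first term $\to0$ by absolute continuity of the integral, as $V|f(v)|^p\in L^1(\R^N)$ and $|\{|w_n|>1\}|\to0$. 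This gives $\int V|f(v_n)-f(v)|^p\to0$. The second convergence in $(ii)$ then follows from $\bigl||f(v_n)|^p-|f(v)|^p\bigr|\le p\,|f(v_n)-f(v)|\bigl(|f(v_n)|^{p-1}+|f(v)|^{p-1}\bigr)$, Hölder with exponents $p$ and $\frac p{p-1}$, the convergence just proved, and the uniform bound $\Psi(v_n)\le C(\Psi(v)+\Psi(w_n))$.

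I expect the only genuinely delicate point to be the region $\{|w_n|>1\}$ in $(ii)$: the norm of $X$ controls no $L^p$–norm of $v$ itself, so one has to notice that its Dirichlet part, through the Sobolev embedding, nonetheless forces $v_n\to v$ in $L^{p^*}$ and hence $|\{|w_n|>1\}|\to0$. Apart from this, everything is bookkeeping once one exploits the twin facts that $f$ is concave while $|f|^p$ is convex, together with the degree-one homogeneity of the Luxemburg-type functional $\rho$.
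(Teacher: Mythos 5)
Your proof is correct, and it takes a genuinely different route than the paper. For $(i)$ the paper partitions $\R^N$ into $A_\xi=\{\xi|v|\le 1\}$ and its complement, treats $A_\xi$ with H\"older and Lemma~\ref{P}(vii), and then handles $A_\xi^c$ separately for $\xi\ge1$ (a pointwise estimate) and $0<\xi<1$, where Sobolev/Chebyshev produce the $\|\nabla v\|_{L^p}^{p^*}$ term; only then is the infimum over $\xi$ taken. You instead exploit that $(\alpha,p)\in D_N$ (or $D^N$) forces $p\ge 2\alpha$, so $|f|^p$ is convex while $f$ is concave on $[0,\infty)$, and that $\rho$ is positively $1$-homogeneous; this makes the near-optimal $\xi$ automatically $>1$ when $\rho(v)<1$, so the region $0<\xi<1$ never has to be treated and the gradient term drops out, yielding the strictly sharper Luxemburg-type inequality $\Psi(v)/(1+\Psi(v)^{(p-1)/p})\le\rho(v)$ from which the paper's $(i)$ follows with $C=1$. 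For $(iii)$ the two arguments coincide (both rest on $|f(\xi t)|\le\xi|f(t)|$ for $\xi\ge1$). For $(ii)$ the paper passes from $\Psi(v_n-v)\to0$ to the conclusion via the $\Delta_2$ bound $|f(v_n)|^p\le C(|f(v_n-v)|^p+|f(v)|^p)$ and dominated convergence against an $L^1$ majorant extracted from the convergent sequence --- a step the authors themselves flag; you instead split on $\{|w_n|\le1\}$ (controlled by $\Psi(w_n)$ through $|f'|\le1$ and Lemma~\ref{P}(vii)) and $\{|w_n|>1\}$, whose measure tends to $0$ via the Dirichlet part of the $X$-norm and the Sobolev embedding, so that the $V|f(v)|^p$ contribution there vanishes by absolute continuity. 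Both approaches are valid: the paper's is more self-contained in its use of $f$'s growth bounds alone, while yours isolates the twin convexity/concavity structure more transparently, gives a slightly stronger $(i)$, and sidesteps the domination argument in $(ii)$.
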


\begin{proof}
    For $\xi>0$ and $v\in X$, we define $$A_\xi=\{x\in\R^N:\xi|v(x)|\leq 1\}.$$ Now, by using (ii) we can write
    \begin{align*}
        \int V(x)|f(v)|^p dx&=\int_{A_\xi} V(x)|f(v)|^p dx+\int_{A_\xi^c} V(x)|f(v)|^p dx\\
        &=\int_{A_\xi} V(x)|f(v)|^{p-1}|v(x)| dx+\int_{A_\xi^c} V(x)|f(v)|^p dx
    \end{align*}

Using H\"{o}lder inequality, (vii) in lemma \ref{P} and $s^\frac{1}{p}\leq 1+s$ for all $s\geq 0$ we have
\begin{align}\label{LL1}
   \int_{A_\xi} V(x)|f(v)|^{p-1}|v(x)| dx&\leq (\int_{A_\xi} V(x)|v|^p dx)^\frac{1}{p}(\int_{A_\xi} V(x)|f(v)|^p| dx)^\frac{p-1}{p}\nonumber\\
   &\leq (\frac{1}{\xi}\int_{A_\xi} V(x)|\xi v|^p dx)^\frac{1}{p}(\int V(x)|f(v)|^p| dx)^\frac{p-1}{p}\nonumber\\
   &\leq C(\frac{1}{\xi}\int_{A_\xi} V(x)|f(\xi v)|^p dx)^\frac{1}{p}(\int V(x)|f(v)|^p| dx)^\frac{p-1}{p}\nonumber\\
   &\leq C[||\nabla v||_{L^p(\R^N)}^{p^*}+\frac{1}{\xi}(1+\int V(x)|f(\xi v)|^p dx)](\int V(x)|f(v)|^p| dx)^\frac{p-1}{p}
\end{align}
If $\xi\geq 1$ then by using (iv) in lemma \ref{P} we get
\begin{align}\label{LL2}
    \int_{A_\xi^c} V(x)|f(v)|^p dx&\leq C\int_{A_\xi^c} V(x)|v|^\frac{p}{2\alpha} dx\leq C\frac{1}{\xi}\int_{A_\xi^c} V(x)|\xi v|^\frac{p}{2\alpha} dx\leq C\frac{1}{\xi}\int_{A_\xi^c} V(x)|f(\xi v)|^p dx\nonumber\\
    &\leq C[||\nabla v||_{L^p(\R^N)}^{p^*}+\frac{1}{\xi}(1+\int V(x)|f(\xi v)|^p dx)]
\end{align}
If $0<\xi<1$ then by using $(v_1)$, Sobolev inequality and Chebyshev inequality, we deduce
\begin{align}\label{LL3}
\int_{A_\xi^c} V(x)|f(v)|^p dx&\leq C\int_{A_\xi^c} V(x)|v|^p dx
    \leq C\int_{A_\xi^c}|v|^p dx
    \leq C[\int_{A_\xi^c}|v|^{p^*} dx]^\frac{p}{p^*}|A_\xi^c|^{1-\frac{p}{p^*}}\nonumber\\
    &\leq C[\int|v|^{p^*} dx]^\frac{p}{p^*}|A_\xi^c|^{1-\frac{p}{p^*}}\leq C[\int |\nabla v|^p dx][\xi^{p^*}\int_{A_\xi^c}|v|^{p^*}]^\frac{p}{N}\nonumber\\
    &\leq C[\int |\nabla v|^p dx][\int_{\R^N}|v|^{p^*}]^\frac{p}{N}
    \leq C[\int |\nabla v|^p dx]^{1+\frac{p^*}{N}}
    \leq C[\int |\nabla v|^p dx]^\frac{p^*}{p}\nonumber\\
    &\leq C[||\nabla v||_{L^p(\R^N)}^{p^*}+\frac{1}{\xi}(1+\int V(x)|f(\xi v)|^p dx)]
\end{align}
Thus, from (\ref{LL1})-(\ref{LL3}) we can conclude that for all $\xi>0$
$$ \int V(x)|f(v)|^p dx\leq C[||\nabla v||_{L^p(\R^N)}^p+\frac{1}{\xi}(1+\int V(x)|f(\xi v)|^p dx)][1+(\int V(x)|f(v)|^p dx)^\frac{p-1}{p}]$$
 which proves the first property. To prove the second property, if $v_n\rightarrow v$ in $X$ then from the first property we have, 
 $$\int V(x)|f(v_n-v)|^p dx\rightarrow 0 \text{ as } n\rightarrow\infty.$$ There exists a nonnegative function $h\in L^1(\R^N)$ such that up to a subsequence $v_n\rightarrow v$ a.e. in $\R^N$ and $\color{red}{V(x)|f(v_n-v)|^p\leq h}$. Since $|f|^p$ is convex and satisfies $\Delta_2$ condition (see M.M Rao\cite{MR}) so $V(x)|f(v_n)|^p\leq CV(x)[|f(v_n-v)|^p+|f(v)|^p]\leq C[h+V(x)|f(v)|^p]$. Moreover, Fatou's lemma ensures $\int V(x)|f(v)|^p dx<\infty$. Thus, by the Dominated Convergence Theorem, we can conclude 
$$\int V(x)|f(v_n)-f(v)|^p dx\rightarrow 0.$$
and $$\int V(x)||f(v_n)|^p-|f(v)|^p| dx\rightarrow 0.$$
To prove the third part, since $\frac{f(t)}{t}$ is nonincreasing in $(0, \infty)$ so for $\xi>1$ and $v\in X$, we obtain
\begin{align}\label{LL4}
    \frac{1}{\xi}(1+\int V(x)|f(\xi v)|^p dx)\leq \frac{1}{\xi}+\xi^{p-1}\int V(x)|f(v_n-v)|^p dx
\end{align}
For every $\epsilon>0$, we can choose $\xi_0>1$ such that $\frac{1}{\xi_0}<\frac{\epsilon}{2}$. There exists a positive integer $N_0$ such that
$\int V(x)|f(v_n-v)|^p dx<\frac{\epsilon}{2\xi_0^{p-1}}$, for all $n\geq N_0.$
Thus, (\ref{LL4}) yields 
\begin{align*}
   \inf_{\xi>0}\frac{1}{\xi}(1+\int V(x)|f(\xi v)|^p dx)\leq \epsilon, \text{ for all } n\geq N_0
\end{align*}
and the property (3) follows.
\end{proof}
\begin{cor}\label{BC}
    If $u_n\rightarrow 0$ in $X$ if and only if $\int [H(Du_n)^p+V|f(u_n)|^p] dx\rightarrow 0$ as $n\rightarrow \infty$.
\end{cor}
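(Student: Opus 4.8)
The plan is to read Corollary~\ref{BC} as essentially a repackaging of parts (i) and (iii) of Lemma~\ref{LL}, and to prove the two implications separately. Throughout I use that the norm $\|v\|=\|H(Dv)\|_{L^p(\R^N)}+m(v)$, with $m(v):=\inf_{\eta>0}\frac1\eta\bigl(1+\int V(x)|f(\eta v)|^p\,dx\bigr)$, is a sum of two nonnegative quantities, so $\|v_n\|\to0$ if and only if both summands tend to $0$; and I use Remark~\ref{equiv}, which gives $A\|\nabla v\|_{L^p(\R^N)}\le\|H(Dv)\|_{L^p(\R^N)}\le B\|\nabla v\|_{L^p(\R^N)}$.

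For the ``only if'' direction, suppose $u_n\to0$ in $X$. Then $\|H(Du_n)\|_{L^p(\R^N)}\to0$, hence $\int H(Du_n)^p\,dx\to0$ and, by Remark~\ref{equiv}, $\|\nabla u_n\|_{L^p(\R^N)}\to0$; also $m(u_n)\to0$. Evaluating Lemma~\ref{LL}(i) at $v=u_n$, the right-hand side is bounded by $C\bigl[\|\nabla u_n\|_{L^p(\R^N)}^{p^*}+m(u_n)\bigr]\to0$ (any fixed positive power of $\|\nabla u_n\|_{L^p(\R^N)}$ still tends to $0$). Writing $a_n:=\int V(x)|f(u_n)|^p\,dx\ge0$, Lemma~\ref{LL}(i) gives $g(a_n)\to0$, where $g(t):=t\bigl(1+t^{(p-1)/p}\bigr)^{-1}$. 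A direct computation gives $g'(t)=\bigl(1+t^{(p-1)/p}\bigr)^{-2}\bigl(1+\tfrac1p t^{(p-1)/p}\bigr)>0$, with $g(0)=0$ and $g(t)\sim t^{1/p}\to\infty$, so $g$ is a strictly increasing bijection of $[0,\infty)$ onto itself; therefore $a_n\to0$. Combining, $\int\bigl[H(Du_n)^p+V|f(u_n)|^p\bigr]\,dx\to0$.

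For the ``if'' direction, suppose $\int\bigl[H(Du_n)^p+V|f(u_n)|^p\bigr]\,dx\to0$. Since both terms are nonnegative, $\int H(Du_n)^p\,dx\to0$, i.e. $\|H(Du_n)\|_{L^p(\R^N)}\to0$, and $\int V(x)|f(u_n)|^p\,dx\to0$. Apply Lemma~\ref{LL}(iii) with $v_n=u_n$ and $v\equiv0$: this is legitimate because $f(0)=0$ (property in Lemma~\ref{P}), so its hypothesis reads exactly $\int V(x)|f(u_n)|^p\,dx\to0$, and its conclusion is $m(u_n)\to0$. Hence $\|u_n\|=\|H(Du_n)\|_{L^p(\R^N)}+m(u_n)\to0$.

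There is no deep obstacle here: the only point requiring a (short) argument of its own is that $g(a_n)\to0$ forces $a_n\to0$, which the monotonicity of $g$ settles; everything else is bookkeeping, namely identifying the infimum term in $\|\cdot\|$ with the quantity that Lemma~\ref{LL}(i),(iii) controls and produces, and noting that replacing $\|\nabla u_n\|_{L^p(\R^N)}$ by $\|\nabla u_n\|_{L^p(\R^N)}^{p^*}$ is harmless since the base tends to $0$.
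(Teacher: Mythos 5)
Your argument is correct and is essentially the paper's own route: the paper simply calls Corollary~\ref{BC} an immediate consequence of Lemma~\ref{LL}, and you have made that explicit by using part (i) (together with the strict monotonicity of $t\mapsto t\bigl(1+t^{(p-1)/p}\bigr)^{-1}$ to pass from $g(a_n)\to0$ to $a_n\to0$) for one implication and part (iii) applied with $v\equiv0$ for the other.
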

\begin{proof}
  The proof is an immediate consequence of the above lemma.   
\end{proof}

Define $E=\{u\in W^{1,p}(\R^N)| \int V(x)|u|^p\;dx<\infty\}$ equipped with the norm $$||u||^p=\int [|\nabla u|^p+V(x)|u|^p]\; dx$$

\begin{cor}
    The embedding $E\hookrightarrow X$ is continuous.
\end{cor}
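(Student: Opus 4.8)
The plan is to show that the identity map $\iota:E\to X$, $\iota(u)=u$, is a well-defined bounded linear operator; since it is linear, boundedness is the same as continuity. Well-definedness, i.e. $E\subseteq X$ as sets, is immediate from Lemma \ref{P}(ii): for $u\in E$ we have $u\in W^{1,p}(\R^N)$ and $|f(u)|\le |u|$ pointwise, hence $\int V(x)|f(u)|^p\,dx\le\int V(x)|u|^p\,dx<\infty$, so $u\in X$.

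For the quantitative bound I would estimate the two pieces of $\|\cdot\|$ (see (\ref{n1})) separately. The gradient piece is handled by Remark \ref{equiv}: $H(Du)\le B|\nabla u|$ pointwise, so $\|H(Du)\|_{L^p(\R^N)}\le B\|\nabla u\|_{L^p(\R^N)}\le B\|u\|_E$, the last step because $\|\nabla u\|_{L^p(\R^N)}^p\le \int\big(|\nabla u|^p+V|u|^p\big)\,dx=\|u\|_E^p$. For the Orlicz-type piece I would use the homogeneity of $\|\cdot\|$ together with the crude estimate (\ref{n2}): applying (\ref{n2}) to $tu$ for $t>0$, and using $\|H(D(tu))\|_{L^p(\R^N)}=t\|H(Du)\|_{L^p(\R^N)}$ and $\int V|f(tu)|^p\,dx\le t^p\int V|u|^p\,dx\le t^p\|u\|_E^p$ (again by Lemma \ref{P}(ii)), gives $t\|u\|=\|tu\|\le 1+tB\|u\|_E+t^p\|u\|_E^p$, i.e. $\|u\|\le \tfrac1t+B\|u\|_E+t^{p-1}\|u\|_E^p$ for every $t>0$. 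Choosing $t=1/\|u\|_E$ (the case $u=0$ being trivial) collapses this to $\|u\|\le(B+2)\|u\|_E$, which is the asserted continuous embedding.

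An essentially equivalent but slicker route avoids explicit constants: for a linear map between normed spaces, boundedness follows from sequential continuity at $0$. If $u_n\to0$ in $E$ then $\int|\nabla u_n|^p\,dx\to0$ and $\int V|u_n|^p\,dx\to0$, whence $\int\big(H(Du_n)^p+V|f(u_n)|^p\big)\,dx\le B^p\int|\nabla u_n|^p\,dx+\int V|u_n|^p\,dx\to0$, and Corollary \ref{BC} gives $u_n\to0$ in $X$. I do not anticipate a genuine obstacle; the only subtlety is the second term of the $X$-norm, since a function in $L^p(\R^N)$ need not be integrable and so $\int V(x)|f(\eta u)|\,dx$-type quantities cannot be controlled by a single linear multiple of $\|u\|_E$ — the rescaling in $t$ above (or, equivalently, the appeal to Corollary \ref{BC}) is exactly what circumvents this.
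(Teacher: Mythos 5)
Your proposal is correct, and your second (``slicker'') route is precisely the paper's argument: the paper uses Lemma \ref{P}(ii) to get $\int V(x)|f(v_n)|^p\,dx\le\int V(x)|v_n|^p\,dx\to0$ when $v_n\to0$ in $E$, notes $\|H(Dv_n)\|_{L^p}\le B\|\nabla v_n\|_{L^p}\to0$ via Remark \ref{equiv}, and invokes Lemma \ref{LL} (of which Corollary \ref{BC} is the packaged consequence) to conclude $v_n\to0$ in $X$. Your first route is a genuine quantitative sharpening the paper does not spell out: it exploits the positive homogeneity of $\|\cdot\|$ on $X$ (which follows from the substitution $\eta\mapsto\eta/t$ inside the infimum in (\ref{n1})) together with the crude bound (\ref{n2}) and Lemma \ref{P}(ii), and by optimizing over $t$ yields the explicit operator bound $\|u\|\le(B+2)\|u\|_E$. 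That version handles the only delicate point head-on — the Orlicz-type piece of the $X$-norm is not itself a power-law integral functional of $u$ and cannot be majorized by a single linear multiple of $\|u\|_E$ without the rescaling in $t$ — and buys you an explicit constant where the paper settles for a sequential continuity argument. Both are sound.
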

\begin{proof}
 Using the second property in lemma \ref{P} we have $$\int V(x)|f(v_n)|^p dx\leq \int V(x)|v_n|^pdx.$$
 Thus, if $v_n\rightarrow 0$ in $E$ then $$\int V(x)|f(v_n)|^p dx\rightarrow 0.$$ Hence, lemma \ref{LL} ensures $v_n\rightarrow 0$ in $X$.
\end{proof}

\begin{lem}\label{Comp}
\begin{enumerate}[label=(\roman*)]
\item  The map $v\rightarrow f(v)$ is continuous from $X$ to $L^s(\R^N)$ for $p\leq s\leq 2\alpha p^*$. Moreover, the map is locally compact for $p\leq s<2\alpha p^*$.
\item  The map $v\rightarrow f(v)$ from $X_r$ to $L^s(\R^N)$ is compact for $p<s<2\alpha p^*$.
\end{enumerate}
\end{lem}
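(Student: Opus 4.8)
The plan is to reduce both assertions to classical results --- continuity of superposition (Nemytskii) operators, the Rellich--Kondrachov theorem, and the compact radial embedding $W^{1,p}_{\mathrm{rad}}(\R^N)\hookrightarrow\hookrightarrow L^q(\R^N)$ for $p<q<p^*$ and $N\ge 2$ --- after identifying, along sequences, the $X$-topology with the $W^{1,p}(\R^N)$-topology. The pointwise facts I will use from Lemma \ref{P} are: $f$ is $1$-Lipschitz (immediate from $f'=[1+(2\alpha)^{p-1}|f|^{(2\alpha-1)p}]^{-1/p}\le 1$); the sublinear bound $(v)$, $|f(t)|\le(2\alpha)^{1/(2\alpha p)}|t|^{1/(2\alpha)}$, which upon raising to powers gives $|f(t)|^{2\alpha p}\le 2\alpha|t|^p$ and $|f(t)|^{2\alpha p^*}\le(2\alpha)^{p^*/p}|t|^{p^*}$; and the reverse bound $(x)$, $|t|\le M_1|f(t)|+M_2|f(t)|^{2\alpha}$.

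\emph{Step 1 (key reduction).} I claim that $v_n\to v$ in $X$ implies $v_n\to v$ in $W^{1,p}(\R^N)$, and that $X$-bounded sequences are bounded in $W^{1,p}(\R^N)$. The gradient part is easy: Remark \ref{equiv} and the form of the $X$-norm control $\|\nabla v_n\|_{L^p}$, hence $\|v_n\|_{L^{p^*}}$ by the Sobolev inequality. The delicate point is $\|v_n\|_{L^p(\R^N)}$, which the $X$-norm does not obviously dominate. Using $(v_1)$ fix $c>0$ and $R_0\ge 0$ with $V\ge c$ on $\R^N\setminus B_{R_0}$ (in the constant-potential case $R_0=0$, $c=V>0$). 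Since $s\mapsto s/(1+s^{(p-1)/p})$ is unbounded, Lemma \ref{LL}(i) bounds $\int V|f(v_n)|^p$; then splitting $\R^N$ into $B_{R_0}$ (finite measure, use the $L^{p^*}$-bound), the set $\{|v_n|>1\}$ (use $|v_n|^p\le|v_n|^{p^*}$ and Sobolev), and $(\R^N\setminus B_{R_0})\cap\{|v_n|\le 1\}$ (where $(x)$ together with $|f(v_n)|\le|v_n|\le 1$ gives $|v_n|^p\le C|f(v_n)|^p\le Cc^{-1}V|f(v_n)|^p$) bounds $\|v_n\|_{L^p}$. For the convergence statement, Corollary \ref{BC} applied to $v_n-v$ gives $\int[H(D(v_n-v))^p+V|f(v_n-v)|^p]\to 0$, hence $\nabla v_n\to\nabla v$ in $L^p$, $v_n\to v$ in $L^{p^*}$, and $\int_{\R^N\setminus B_{R_0}}|f(v_n-v)|^p\to 0$; the same decomposition --- now combined with $(x)$, the inequality $|f(v_n-v)|^{2\alpha p}\le 2\alpha|v_n-v|^p$, and $v_n\to v$ in $L^{p^*}$ --- yields $v_n\to v$ in $L^p(\R^N)$, hence in $W^{1,p}(\R^N)$.

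\emph{Step 2 (conclusion).} Granting Step 1, part (i) follows from superposition-operator theory: $f$ is continuous from $L^p(\R^N)$ to itself (being $1$-Lipschitz) and, by the growth $(v)$ with admissible exponent $p^*/(2\alpha p^*)=1/(2\alpha)$, continuous from $L^{p^*}(\R^N)$ to $L^{2\alpha p^*}(\R^N)$; since $v_n\to v$ in $L^p(\R^N)\cap L^{p^*}(\R^N)$, we obtain $f(v_n)\to f(v)$ in $L^p$ and in $L^{2\alpha p^*}$, and by interpolation in $L^s$ for every $p\le s\le 2\alpha p^*$. For the local compactness, an $X$-bounded sequence is bounded in $W^{1,p}(B_R)$ for each $R$, so Rellich--Kondrachov gives a subsequence with $v_n\to v$ a.e.\ and in $L^{p^*-\varepsilon}(B_R)$; then $f(v_n)\to f(v)$ in $L^{p^*-\varepsilon}(B_R)$ by the Lipschitz bound, while $|f(v_n)|^{2\alpha p^*}\le C|v_n|^{p^*}$ keeps $\{f(v_n)\}$ bounded in $L^{2\alpha p^*}(B_R)$, so interpolation gives $f(v_n)\to f(v)$ in $L^s(B_R)$ for $p\le s<2\alpha p^*$, and a diagonal argument over $R\to\infty$ concludes (i). For part (ii), an $X_r$-bounded sequence is bounded in $W^{1,p}_{\mathrm{rad}}(\R^N)$ by Step 1, so the compact radial embedding yields a subsequence with $v_n\to v$ a.e.\ and in $L^q(\R^N)$ for all $p<q<p^*$; hence $f(v_n)\to f(v)$ in $L^q(\R^N)$ for $p<q<p^*$ (Lipschitz), with $\{f(v_n)\}$ bounded in $L^{2\alpha p^*}(\R^N)$, and for any $s\in(p,2\alpha p^*)$ interpolation (immediate when $s<p^*$, and between some $L^q$, $q\in(p,p^*)$, and $L^{2\alpha p^*}$ when $p^*\le s<2\alpha p^*$) gives $f(v_n)\to f(v)$ in $L^s(\R^N)$.

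\emph{Main obstacle.} The only genuinely delicate point is Step 1. The norm of $X$ is of Orlicz--Sobolev type built on the Young function $|f(\cdot)|^p$, which satisfies $|f(\cdot)|^p\le|\cdot|^p$, so it embeds $L^p$ into $X$ rather than the reverse; consequently $X$-convergence does not by itself produce $L^p(\R^N)$-convergence on the region where $V$ may degenerate. Overcoming this requires the reverse bound $(x)$ of Lemma \ref{P}, the fact that $(v_1)$ confines the possible degeneracy of $V$ to a set of finite measure, the quantitative estimate of Lemma \ref{LL}(i), and the Sobolev inequality, combined via the decomposition of $\R^N$ above. Once Step 1 is in place, parts (i) and (ii) are routine.
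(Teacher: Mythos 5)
Your proposal is correct, but it follows a genuinely different route from the paper. The paper never tries to convert $X$-convergence of $v_n$ into $W^{1,p}(\R^N)$-convergence of $v_n$. Instead it works directly with $f(v_n)$: it introduces the weighted space $E=\{u\in W^{1,p}(\R^N):\int V|u|^p<\infty\}$ with norm $\|u\|_E^p=\int(|\nabla u|^p+V|u|^p)$, observes that $v\in X$ implies $f(v)\in E$ (since $|f'|\le 1$), proves $E\hookrightarrow W^{1,p}(\R^N)$ under $(v_1)$, and then shows $f(v_n)\to f(v)$ in $E$ by combining Lemma~\ref{LL}(ii) (for the potential term $\int V|f(v_n)-f(v)|^p\to 0$) with a dominated-convergence argument for $Df(v_n)=f'(v_n)Dv_n\to Df(v)$ in $L^p$. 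From there the chain $E\hookrightarrow W^{1,p}\hookrightarrow L^s$, interpolation with the a priori $L^{2\alpha p^*}$-bound coming from $\nabla f^{2\alpha}(v)$ and Sobolev, and Rellich/Strauss finishes the proof. You instead first establish, in your Step~1, that $X$-convergence and $X$-boundedness imply the analogous statements in $W^{1,p}(\R^N)$ --- this is essentially a direct proof of Remark~\ref{eqv}, which the paper records only \emph{after} Lemma~\ref{Comp} and obtains via the Inverse Mapping Theorem once $X$ is shown to be complete --- and then reduce everything to Nemytskii continuity of $f:L^p\to L^p$ and $f:L^{p^*}\to L^{2\alpha p^*}$ plus Rellich--Kondrachov and the compact radial embedding. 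Both arguments are sound. Your route is cleaner once Step~1 is granted, but Step~1 is where the real work sits: recovering $\|v_n-v\|_{L^p(\R^N)}\to 0$ from the Orlicz-type $X$-norm needs the reverse bound~(x), the three-way decomposition $B_{R_0}\cup\{|v_n-v|>1\}\cup(\R^N\setminus B_{R_0})\cap\{|v_n-v|\le1\}$, Sobolev, and the fact that $(v_1)$ forces $V\ge c>0$ at infinity. The paper's route sidesteps this delicacy entirely: by staying with $f(v_n)$ and the space $E$, the $L^p$ control is supplied by the (much easier) embedding $E\hookrightarrow W^{1,p}$ rather than by unwinding the nonlinear change of variable. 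The trade-off is that the paper's proof is more bespoke (dominated convergence along an extracted dominating family for each $\partial_i v_n$), whereas yours is more modular and makes the role of the classical compactness theorems transparent. One small inessential remark: the inequality $|f(t)|^{2\alpha p}\le 2\alpha|t|^p$ you invoke in the convergence half of Step~1 is not actually needed --- on the region $\{|v_n-v|\le 1\}$ property (x) alone already gives $|v_n-v|\le(M_1+M_2)|f(v_n-v)|$ since $|f(v_n-v)|\le1$ there.
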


\begin{proof}
 It is clear that under the condition $(v_1)$, the embedding $E\hookrightarrow W^{1,p}(\R^N)$ is continuous. Moreover, if $v\in X$ then $f(v)\in E$. There exists $C>0$ such that for every $v\in X$, 
\begin{align}\label{LL5}
    ||f(v)||_{L^p(\R^N)}\leq C ||f(v)||_E\leq C[\int (|\nabla v|^p+V(x)|f(v)|^p)dx]^\frac{1}{p}.
\end{align}
Using the property (v) and (ix) in lemma \ref{P}, we have
\begin{align}\label{LL6}
\int |f(v)|^{2\alpha p^*} dx\leq [\int |\nabla f^{2\alpha}(v)|^p dx]^\frac{p}{p^*}\leq C[\int |\nabla v|^p dx]^\frac{p}{p^*}
\end{align}
 Using (\ref{LL5}), (\ref{LL6}) and the interpolation inequality, we can conclude $f(v)\in L^s(\R^N)$ for all $s \in [p, 2\alpha p^*]$. Let $v_n\rightarrow v$ in $X$. The property (1) in lemma \ref{LL} ensures
 $$\int V(x)|f(v_n)-f(v)|^p dx\rightarrow 0.$$
Furthermore, $Dv_n\rightarrow Dv$ in $(L^p(\R^N))^N$. For every $1\leq i\leq N$, without loss of generality we can assume that there exists $h_i\in L^p(\R^N)$ such that for almost every $x\in\R^N$, 
\begin{equation}
\begin{split}
    v_n(x)\rightarrow v(x) \;\mbox{ as }\; n\to \infty\\
    \frac{\partial v_n}{\partial x_i}(x)\rightarrow \frac{\partial v_n}{\partial x_i}(x)\; \mbox{ as } \;n\to \infty\\
    |\frac{\partial v_n}{\partial x_i}|,\, |\frac{\partial v}{\partial x_i}|\leq h_i
\end{split}
\end{equation}
 By the Dominated Convergence Theorem, we have
$$\int |\frac{\partial}{\partial x_i} (f(v_n))-\frac{\partial}{\partial x_i} (f(v))|^p dx=\int |f'(v_n)\frac{\partial v_n}{\partial x_i}-f'(v)\frac{\partial v}{\partial x_i}|^p dx\rightarrow 0.$$
Therefore, $Df(v_n)\rightarrow Df(v)$ in $L^p(\R^N)$. Consequently, $f(v_n)\to f(v)$ in $E$. Since for all $s\in [p, p^*]$,  $$E\hookrightarrow W^{1,p}(\R^N)\hookrightarrow L^s(\R^N)$$ so $f(v_n)\to f(v)$ in $L^s(\R^N).$ Interpolation inequality and Rellich's lemma complete the first part. The second part is easily deduced from Theorem \ref{ST}. 
\end{proof}
\begin{lem}
    $(X, ||\cdot||)$ is a Banach space.
\end{lem}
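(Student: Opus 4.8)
$(X, \|\cdot\|)$ is a Banach space.

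The plan is to show that $\|\cdot\|$ is genuinely a norm and then that $X$ is complete by using the completeness of $W^{1,p}(\R^N)$ together with the results already established in Lemma \ref{LL} and Corollary \ref{BC}.

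First I would verify the norm axioms for $\|v\| = \|H(Dv)\|_{L^p(\R^N)} + \inf_{\eta>0}\tfrac1\eta\{1+\int V(x)|f(\eta v)|\,dx\}$. Positive homogeneity of the second (Luxemburg-type) term follows by the substitution $\eta \mapsto \eta/|\lambda|$, as in the standard theory of Orlicz norms, and the first term is homogeneous since $H$ is a norm (property (H1)). The triangle inequality for the gradient term is clear; for the infimum term one splits $\eta$ optimally for $v$ and $w$ and uses convexity of $|f|$ — note that $|f|$ itself is convex (it is increasing and concave on $[0,\infty)$... more carefully, one uses that $t\mapsto |f(t)|$ is subadditive, which follows since $f'$ is decreasing on $[0,\infty)$, hence $|f(a+b)| \le |f(a)| + |f(b)|$). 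Finally $\|v\| = 0$ forces $\|H(Dv)\|_{L^p} = 0$, hence $Dv = 0$ by Remark \ref{equiv}, and forces $\int V(x)|f(v)|\,dx = 0$ after taking $\eta \to \infty$; combined with $v \in W^{1,p}(\R^N)$ this gives $v \equiv 0$ when $V \not\equiv 0$ (and in the constant-potential case $V>0$ this is immediate; if $V \equiv 0$ one works in $W^{1,p}$ directly). Thus $\|\cdot\|$ is a norm.

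For completeness, let $(v_n)$ be Cauchy in $X$. Then $(H(Dv_n))$ is Cauchy in $L^p(\R^N)$, so by uniform convexity/ellipticity of $H$ (properties (H3)–(H4)) and Remark \ref{equiv} the sequence $(Dv_n)$ is Cauchy in $(L^p(\R^N))^N$, converging to some $G \in (L^p(\R^N))^N$. I also need $L^p$-control on $v_n$ itself: using property (x) of Lemma \ref{P}, $|t| \le M_1|f(t)| + M_2|f(t)|^{2\alpha}$, together with the first estimate of Lemma \ref{LL}, one bounds $\int V(x)|f(v_n)|^p\,dx$ and hence (invoking $V \not\equiv 0$, or working on the support of $V$, plus the Sobolev embedding $D^{1,p}\hookrightarrow L^{p^*}$ controlled by $\|Dv_n\|_{L^p}$) one shows $(v_n)$ is Cauchy in $L^{p}_{\mathrm{loc}}$, in fact in $W^{1,p}(\R^N)$; let $v$ be its $W^{1,p}$-limit, so $Dv = G$ and, along a subsequence, $v_n \to v$ a.e. Now apply the second and third parts of Lemma \ref{LL}: since $v_n \to v$ in $W^{1,p}$ and the first estimate of Lemma \ref{LL} bounds $\int V(x)|f(w)|^p\,dx$ by gradient norms plus the Luxemburg term, one deduces $\int V(x)|f(v_n - v)|^p\,dx \to 0$, whence by Lemma \ref{LL}(iii) the Luxemburg term $\inf_{\xi>0}\tfrac1\xi[1+\int V(x)|f(\xi(v_n-v))|^p\,dx] \to 0$, and since the $p$-th root / Corollary \ref{BC} relate this to the $|f|$ (not $|f|^p$) Luxemburg term, also $\inf_{\eta>0}\tfrac1\eta\{1+\int V(x)|f(\eta(v_n-v))|\,dx\} \to 0$. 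Combined with $\|H(Dv_n - Dv)\|_{L^p} \to 0$ this gives $v_n \to v$ in $X$, and $v \in X$ since $\int V(x)|f(v)|^p\,dx < \infty$ by Fatou applied to the a.e. convergent subsequence. Hence $X$ is complete.

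The main obstacle I anticipate is the bookkeeping that links the three slightly different quantities floating around: the Luxemburg norm built from $|f|$ appearing in the definition \eqref{n1}, the integral quantity $\int V(x)|f(\cdot)|^p\,dx$ appearing throughout Lemma \ref{LL}, and the homogeneous $D^{1,p}$-norm used to absorb error terms. Passing cleanly between $|f|$ and $|f|^p$ versions requires the elementary inequalities $s^{1/p} \le 1+s$ and the $\Delta_2$-property of $|f|^p$ already invoked in the proof of Lemma \ref{LL}, plus the estimate $|f(t)| \le |f(t)|^{1/p}$-type comparisons via property (ii) of Lemma \ref{P} on the sets $\{|v|\le 1\}$ and $\{|v|>1\}$. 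A secondary subtlety is recovering $W^{1,p}$-convergence of $v_n$ (not merely of the gradients) from $X$-convergence when $V$ may vanish on a large set; this is handled by combining the Sobolev inequality on $D^{1,p}$ with property (x) of Lemma \ref{P} exactly as in the proof of Lemma \ref{LL}, parts (i)–(ii). None of these steps is deep, but they must be assembled carefully, so I would present the norm verification briefly and spend the bulk of the argument on the completeness step, citing Lemma \ref{LL} and Corollary \ref{BC} at each transition.
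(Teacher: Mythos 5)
Your completeness strategy is essentially the paper's (gradient control straight from the definition of the norm, modular control via Lemma \ref{LL}, recovery of an $L^p$-limit of $v_n$ itself, then Lemma \ref{LL}(iii) and Fatou to upgrade to $X$-convergence and to get $v\in X$), but two of your steps would not go through as proposed. First, the triangle inequality: subadditivity of $f$ is not the right mechanism. If you choose $\tfrac1\eta=\tfrac1{\eta_v}+\tfrac1{\eta_w}$ and estimate $|f(\eta(v+w))|\le |f(\eta_v v)|+|f(\eta_w w)|$ using monotonicity and subadditivity, you are left with the uncontrolled cross terms $\tfrac1{\eta_w}\int V|f(\eta_v v)|+\tfrac1{\eta_v}\int V|f(\eta_w w)|$, and the inequality you need does not follow. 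The functional in (\ref{n1}) is an Amemiya-type norm, and the triangle inequality comes from convexity of the modular $v\mapsto\int V|f(v)|^p\,dx$, i.e.\ property (viii) of Lemma \ref{P} (available precisely because $p\ge 2\alpha$ on $D_N$), via $\rho(\eta(v+w))\le\tfrac{\eta}{\eta_v}\rho(\eta_v v)+\tfrac{\eta}{\eta_w}\rho(\eta_w w)$ — concavity/subadditivity of $f$ points in the wrong direction. Relatedly, the term must be read with $|f(\eta v)|^p$: with the first power as literally displayed in (\ref{n1}), property (v) gives $|f(\eta v)|\le(2\alpha)^{1/2\alpha p}\eta^{1/2\alpha}|v|^{1/2\alpha}$, so $\tfrac1\eta\{1+\int V|f(\eta v)|\,dx\}\to 0$ as $\eta\to\infty$ for a large class of $v$, and one cannot "relate the two versions via Corollary \ref{BC}" as you suggest.

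Second, the step where you claim $(v_n)$ is Cauchy "in $L^p_{\mathrm{loc}}$, in fact in $W^{1,p}(\R^N)$" by "working on the support of $V$ plus Sobolev" is the real work, and your sketch does not deliver it. Sobolev through (\ref{LL6}) controls $f(v_n-v_m)$ only in $L^{2\alpha p^*}$; to use property (x) of Lemma \ref{P} you also need $\int|f(v_n-v_m)|^{2\alpha p}\,dx$ small, which is obtained by interpolating between $L^p$ and $L^{2\alpha p^*}$ and therefore requires a \emph{global} smallness of $\int|f(v_n-v_m)|^p\,dx$ — off the support of $V$ (or where $V$ is small) your argument provides nothing. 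This is exactly the chain the paper runs: Lemma \ref{LL}(i) gives $\int V|f(u_n-u_m)|^p\to0$, then $(v_1)$ is invoked to drop $V$ (your instinct that this is delicate when $V$ may vanish is correct — but your proposed fix does not repair it), then (\ref{LL6}) plus interpolation gives (\ref{f3}), and property (x) yields Cauchyness in $L^p(\R^N)$; only then is the limit $w\in W^{1,p}(\R^N)$ identified and Fatou together with Lemma \ref{LL}(iii) used to conclude $u_n\to w$ in $X$. (A minor point: you do not need (H3)--(H4) to get $Dv_n$ Cauchy in $(L^p)^N$; the $X$-norm of $v_n-v_m$ controls $\|H(D(v_n-v_m))\|_{L^p}$, and Remark \ref{equiv} already converts that into $\|D(v_n-v_m)\|_{L^p}$.)
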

\begin{proof}
Let $\{u_n\}$ is a Cauchy sequence in $X$. Since $X\hookrightarrow D^{1,p}(\R^N)$ so there exists $u\in D^{1,p}(\R^N)$ such that $u_n\to u$ in $D^{1,p}(\R^N)$.
By the inequality (1) in lemma \ref{LL}, we observe 
$$ \int V|f(u_n-u_m)|^p dx\to 0 \text{ as } m,n\to\infty.$$
Under the assumption $(v_1)$, we have  
\begin{equation}\label{f2}
         \int |f(u_n-u_m)|^p dx\to 0 \text{ as } m,n\to\infty
\end{equation}
Using (\ref{LL6}), (\ref{f2}) and Interpolation inequality, we get
\begin{equation}\label{f3}
    \int |f(u_n-u_m)|^{2\alpha p} dx\to 0 \text{ as } m,n\to\infty.
\end{equation}
Using property (x) in lemma \ref{P}, we have 
\begin{equation}
    \int |u_n-u_m|^p dx\leq M_1\int |f(u_n-u_m)|^p dx + M_2\int |f(u_n-u_m)|^{2\alpha p} dx\to 0 \text{ as } m,n\to\infty
\end{equation}
which implies $\{u_n\}$ is Cauchy in $L^p(\R^N)$. Completeness property allows us to assume the existence of $w\in L^p(\R^N)$ such that 
\begin{equation*}
\begin{split}
    u_n\to w \text{ in } L^p(\R^N),\;
    u_n\to w \text{ a.e. in } \R^N.
    \end{split}
\end{equation*}
Since $Du_n\to Du$ in $L^p(\R^N)$ so, $Dw=Du$. Consequently, $w\in W^{1,p}(\R^N).$ \\ Our next claim is $$u_n\to w \text{ in } X.$$ 
For every $\epsilon>0$ there exists $N_0\in \N$ such that
$$\int V|f(u_n-u_m)|^p dx <\epsilon \text{ for all } m,n\geq N_0.$$
By Fatou's lemma, we have for $n\geq N_0$, 
\begin{align}\label{f4}
    \int V|f(u_n-w)|^p dx\leq\liminf\limits_{ m\to \infty}\int V|f(u_n-u_m)|^p dx<\epsilon
\end{align}
Using property 3 in lemma \ref{LL}, we can conclude $u_n\to w$ in $X$. Hence, $(X, ||\cdot||)$ is a Banach space. 
\end{proof}
\begin{rmk}\label{eqv}
    Under the condition $(v_1)$, $X=W^{1,p}(\R^N)$ and the $||\cdot||$ is equivalent to the usual norm $||\cdot||_{1,p,\R^N}$.
\end{rmk}
\begin{proof}
 Let $u\in W^{1,p}(\R^N)$. By property (ii) in lemma \ref{P}, we have 
 $$\int V(x)|f(u)|^p dx\leq V(\infty) \int |u|^p \;dx<\infty.$$
Hence, $X=W^{1,p}(\R^N)$. We claim that the identity map $Id: W^{1,p}(\R^N)\to X$ is a bounded linear map. 
Using property (ii) in lemma \ref{P} and $(v_1)$, we have 
\begin{align}
    \inf_{\xi>0}\frac{1}{\xi}[1+\int V(x)|f(\xi u)|^p\; dx]\leq \inf_{\xi>0}[\frac{1}{\xi}+(\xi)^{p-1}V(\infty)\int |u|^p\; dx]
\end{align}
Now, consider the function $$g(\xi)=\frac{1}{\xi}+L\xi^{p-1} \text{ for } \xi>0$$ where $L=V(\infty)\int |u|^p\; dx$. One can directly find the global minimum of $g$, which is equal to $[(p-1)^\frac{1}{p}+(p-1)^\frac{1-p}{p}]\;L^\frac{1}{p}.$
Thus, there exists a constant $C=[(p-1)^\frac{1}{p}+(p-1)^\frac{1-p}{p}]V(\infty)^\frac{1}{p}$ such that $$\inf_{\xi>0}\frac{1}{\xi}[1+\int V(x)|f(\xi u)|^p dx]\leq C||u||_{L^p(\R^N)}$$
which proves that the map $Id$ is bounded. The conclusion follows from the Inverse Mapping Theorem.
\end{proof}
The following compactness lemma is very useful whose proof is similar to that of lemma 2.2 \cite{YW}.
\begin{lem}\label{CP1}
    If $\{v_n\}$ is a bounded sequence in $X$ such that $$\sup_{x\in \R^N}\int_{B_1(x)}|f(v_n)|^p dx\rightarrow 0 \text{ as } n\rightarrow\infty.$$
    Then $f(v_n)\rightarrow 0$ in $L^s(\R^N)$ for every $s\in (p, 2\alpha p^*).$
\end{lem}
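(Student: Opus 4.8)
The plan is to follow the classical Lions-type concentration-compactness argument adapted to the function $f$. First I would reduce everything to statements about the sequence $w_n := f(v_n)$. Since $\{v_n\}$ is bounded in $X$, the estimates \eqref{LL5} and \eqref{LL6} together with the interpolation inequality show that $\{w_n\}$ is bounded in $L^p(\R^N)$ and in $L^{2\alpha p^*}(\R^N)$; by the continuity of the Nemytskii-type map in Lemma \ref{Comp} (or directly), $\{w_n\}$ is in fact bounded in $W^{1,p}(\R^N)$ via $Df(v_n) = f'(v_n)Dv_n$ and property (ix). So the whole problem becomes: a bounded sequence $\{w_n\}$ in $W^{1,p}(\R^N)$ with $\sup_{x}\int_{B_1(x)}|w_n|^p\,dx \to 0$ converges to $0$ in $L^s$ for $p < s < p^*$; but here I need the larger range $p < s < 2\alpha p^*$, which is why the $L^{2\alpha p^*}$ bound on $w_n$ is essential.

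The key steps, in order, are as follows. (1) Invoke the classical Lions vanishing lemma: from $\sup_x\int_{B_1(x)}|w_n|^p \to 0$ and boundedness in $W^{1,p}(\R^N)$, conclude $w_n \to 0$ strongly in $L^t(\R^N)$ for every $t \in (p, p^*)$. (2) Pick any $s \in (p, 2\alpha p^*)$; if $s \le p^*$ we may even enlarge, but in general choose $t \in (p, p^*)$ and use the interpolation inequality
\begin{align*}
\|w_n\|_{L^s(\R^N)} \le \|w_n\|_{L^t(\R^N)}^{1-\theta}\,\|w_n\|_{L^{2\alpha p^*}(\R^N)}^{\theta}
\end{align*}
with $\theta \in (0,1)$ determined by $\tfrac1s = \tfrac{1-\theta}{t} + \tfrac{\theta}{2\alpha p^*}$, which is solvable precisely because $p < t < s < 2\alpha p^*$. (3) Since $\|w_n\|_{L^{2\alpha p^*}}$ stays bounded and $\|w_n\|_{L^t}\to 0$, the right-hand side tends to $0$, giving $f(v_n) = w_n \to 0$ in $L^s(\R^N)$. (4) Finally translate back: $f(v_n)\to 0$ in $L^s(\R^N)$ is exactly the claimed conclusion.

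The main obstacle, and the only place requiring care, is Step (1): justifying that the hypothesis stated in terms of $|f(v_n)|^p$ really does feed the Lions lemma. One needs that $\{f(v_n)\}$ is bounded in $W^{1,p}(\R^N)$, not merely in $X$; this is where property (ix) of Lemma \ref{P} (giving $|Df(v_n)| = |f'(v_n)||Dv_n| \le |Dv_n|$ pointwise, hence $\|Df(v_n)\|_{L^p}\le \|Dv_n\|_{L^p}$, controlled by $\|v_n\|$ through Remark \ref{equiv}) combines with the $X$-bound on $\int V|f(v_n)|^p$ and assumption $(v_1)$ to bound $\|f(v_n)\|_{L^p}$. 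A secondary subtlety is that the Lions lemma is usually quoted over cubes of a fixed side length rather than unit balls, but this is a harmless equivalence. Once $\{f(v_n)\}$ is known bounded in $W^{1,p}(\R^N)$, the argument is the standard one cited from Lemma 2.2 of \cite{YW}, and the rest is routine interpolation as sketched above.
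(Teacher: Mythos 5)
Since the paper offers no written proof of Lemma \ref{CP1} (it only says the argument is similar to Lemma 2.2 of \cite{YW}), there is nothing in the text to compare against; your reconstruction is, however, correct and is almost certainly the one intended. The structure is right: set $w_n:=f(v_n)$, show $\{w_n\}$ is bounded both in $W^{1,p}(\R^N)$ (via $|f'|\le 1$ together with the $L^p$ estimate \eqref{LL5}) and in $L^{2\alpha p^*}(\R^N)$ (via \eqref{LL6}), apply the Lions vanishing lemma to the $W^{1,p}$ bound to obtain $w_n\to 0$ in $L^t(\R^N)$ for all $t\in(p,p^*)$, and then interpolate against the uniformly bounded $L^{2\alpha p^*}$ norm to reach every $s\in(p,2\alpha p^*)$.

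Two minor corrections of attribution rather than substance. The pointwise bound $|f'(t)|\le 1$ is read off directly from the defining ODE \eqref{DE}, not from property $(ix)$ of Lemma \ref{P}; property $(ix)$ controls $f^{2\alpha-1}f'$, which is exactly what enters the $L^{2\alpha p^*}$ estimate through $\nabla\bigl(f^{2\alpha}(v_n)\bigr)=2\alpha\, f^{2\alpha-1}(v_n)f'(v_n)\nabla v_n$. And the $L^p$ control of $f(v_n)$ does not follow from the $X$-bound and $(v_1)$ quite as quickly as you indicate, since $(v_1)$ only requires $V\ge 0$: one first observes that $(v_1)$ forces $V(\infty)>0$ (otherwise $V$ would be identically $0=V(\infty)$), hence $V\ge V(\infty)/2$ outside a bounded set, and the contribution over that bounded set is absorbed by Sobolev --- this is precisely what the embedding $E\hookrightarrow L^p(\R^N)$ in \eqref{LL5} packages. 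You correctly single this out as the one delicate step, so the omission is expository, not a gap in the argument. With these remarks the proof is sound.
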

\begin{theorem}(Lions \cite[\text{Theorem II.1}]{PLL} )\label{ST}
    The embedding $W^{1,p}_r(\R^N)\hookrightarrow L^q(\R^N)$ is compact for $p<q<p^*$.
\end{theorem}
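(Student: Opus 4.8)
The plan is to prove this (classical) compactness statement in the standard Strauss--Lions way: combine the pointwise decay enjoyed by radial Sobolev functions with the Rellich--Kondrachov theorem on balls. Note $p<N$ here, so that $p^{*}=\frac{Np}{N-p}$ is finite. Identify a radial function $u$ with the associated function $r\mapsto u(r)$, $r=|x|$. The first and crucial step is the \emph{radial decay estimate}: there is $C=C(N,p)>0$ such that
\[
|u(x)|\le C\,|x|^{-\frac{N-1}{p}}\,\|u\|_{1,p,\R^{N}}\qquad\text{for a.e. }x\in\R^{N},\ u\in W^{1,p}_{r}(\R^{N}).
\]
I would first prove this for radial $u\in C_{c}^{\infty}(\R^{N})$: since $s^{N-1}|u(s)|^{p}\to 0$ as $s\to\infty$,
\[
r^{N-1}|u(r)|^{p}=-\int_{r}^{\infty}\frac{d}{ds}\big(s^{N-1}|u(s)|^{p}\big)\,ds\le p\int_{r}^{\infty}s^{N-1}|u(s)|^{p-1}|u'(s)|\,ds,
\]
and Hölder's inequality with exponents $\frac{p}{p-1}$ and $p$ bounds the right-hand side by $C\big(\int_{\R^{N}}|u|^{p}\big)^{\frac{p-1}{p}}\big(\int_{\R^{N}}|\nabla u|^{p}\big)^{\frac1p}\le C\|u\|_{1,p,\R^{N}}^{p}$; the general case follows because radial functions in $C_{c}^{\infty}(\R^{N})$ are dense in $W^{1,p}_{r}(\R^{N})$, so the pointwise bound passes to the limit a.e.

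Next, take a bounded sequence $\{u_{n}\}\subset W^{1,p}_{r}(\R^{N})$, say $\|u_{n}\|_{1,p,\R^{N}}\le M$. By the Rellich--Kondrachov theorem, for each $k\in\N$ the embedding $W^{1,p}(B_{k})\hookrightarrow L^{q}(B_{k})$ is compact (recall $q<p^{*}$), so a diagonal extraction over $k$ yields a subsequence (not relabelled) and a radial $u\in L^{q}_{loc}(\R^{N})$ with $u_{n}\to u$ in $L^{q}(B_{k})$ for every $k$. For the \emph{tails}, the decay estimate gives, for $R\ge 1$,
\[
\int_{|x|\ge R}|u_{n}|^{q}\,dx=\int_{|x|\ge R}|u_{n}|^{q-p}\,|u_{n}|^{p}\,dx\le\big(CM R^{-\frac{N-1}{p}}\big)^{q-p}\int_{\R^{N}}|u_{n}|^{p}\,dx\le C'\,R^{-\frac{(N-1)(q-p)}{p}},
\]
which tends to $0$ as $R\to\infty$ uniformly in $n$ precisely because $q>p$ (this is where the strict inequality $q>p$ is used); by Fatou's lemma the same bound holds for $u$.

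Finally I would combine the two ingredients: given $\varepsilon>0$, fix $R$ so large that $\int_{|x|\ge R}|u_{n}|^{q}\,dx+\int_{|x|\ge R}|u|^{q}\,dx<\varepsilon$ for all $n$; then, since $u_{n}\to u$ in $L^{q}(B_{R})$, the triangle inequality yields $\limsup_{n}\|u_{n}-u\|_{L^{q}(\R^{N})}^{q}\le C\varepsilon$, and letting $\varepsilon\to 0$ gives $u_{n}\to u$ in $L^{q}(\R^{N})$. Hence every bounded sequence has an $L^{q}$-convergent subsequence, i.e. the embedding is compact. The main obstacle — essentially the only nontrivial point — is the radial decay estimate: the formal integration is immediate for smooth compactly supported radial functions, and the care lies in justifying the vanishing of the boundary term $s^{N-1}|u(s)|^{p}$ at infinity and then transferring the pointwise bound to all of $W^{1,p}_{r}(\R^{N})$ by density; once this is in hand, the Rellich-plus-tail argument is routine.
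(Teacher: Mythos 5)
Your argument is correct: it is the classical Strauss--Lions proof (radial decay estimate $|u(x)|\le C|x|^{-\frac{N-1}{p}}\|u\|_{1,p,\R^N}$ by density, Rellich--Kondrachov on balls, uniform tail control using $q>p$), and the paper itself offers no proof of this statement, simply citing Lions, whose argument is essentially the one you give. The only point worth flagging is that the decay estimate, and hence the theorem, requires $N\ge 2$ (so that $\frac{N-1}{p}>0$), which is satisfied in the paper's setting and should be stated as a hypothesis.
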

We will use the following slightly modified version of Jeanjean \cite[\text{Theorem 1.1}]{JJ}. The last part of the theorem follows from \cite[\text{Lemma 2.3}]{JJ}.
\begin{theorem}\label{MP}
      Let $(X,||\cdot||)$ be a Banach space and $J\subset\R^+$ be an interval. Consider the family of $C^1$ functional 
   \begin{align*}
        I_\delta(u)=Au-\delta Bu, \text{ }\delta\in J
   \end{align*}
    where $B$ is a non-negative functional and either $Au\to \infty$ or $Bu\to \infty$ as $||u||\to\infty$. If $$C_\delta:=\inf_{\gamma\in\Gamma_\delta}\max_{t\in[0,1]}I_\delta(\gamma(t))>0$$
    where $\Gamma_\delta=\{ \gamma\in C([0, 1];X) :\gamma(0)=0, I_\delta(\gamma(1))<0 \}$. Then for almost every $\delta\in J$, there exists a sequence $\{x_n\}$ such that 
    \begin{itemize}
        \item $\{x_n\}$ is bounded in $X$.
        \item $I_\delta(x_n)$ converges to $C_\delta$.
        \item $I'_\delta(x_n)$ converges to 0 in $X^*$.
    \end{itemize}
Moreover, the map: $\delta\to C_\delta$ is continuous from the left.
\end{theorem}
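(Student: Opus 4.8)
The plan is to follow Jeanjean's monotonicity trick essentially verbatim; the only new bookkeeping is that the terminal point of an admissible path is not fixed but only constrained by $I_\delta(\gamma(1))<0$. \emph{Step 1 (monotonicity and left-continuity of $\delta\mapsto C_\delta$).} First I would note that $\Gamma_{\delta_1}\subset\Gamma_{\delta_2}$ whenever $\delta_1<\delta_2$ in $J$: for $\gamma\in\Gamma_{\delta_1}$ one has $\gamma(0)=0$ and $I_{\delta_2}(\gamma(1))\leq I_{\delta_1}(\gamma(1))<0$ because $B\geq 0$ and $\delta_2>\delta_1$. Since also $I_{\delta_2}\leq I_{\delta_1}$ pointwise, taking the infimum of $\max_t I_{\delta_2}(\gamma(t))$ over $\Gamma_{\delta_1}\subset\Gamma_{\delta_2}$ yields $C_{\delta_2}\leq C_{\delta_1}$; hence $\delta\mapsto C_\delta$ is non-increasing on $J$ and thus differentiable at a.e.\ $\delta\in J$. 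For left-continuity at a given $\delta$: pick $\gamma\in\Gamma_\delta$ with $\max_t I_\delta(\gamma(t))<C_\delta+\epsilon$; since $I_\delta(\gamma(1))<0$ strictly and $B$ is bounded along the compact path, $\gamma\in\Gamma_{\delta'}$ for $\delta'$ close to $\delta$, and $\max_t I_{\delta'}(\gamma(t))\leq\max_t I_\delta(\gamma(t))+(\delta-\delta')\sup_{[0,1]}B(\gamma)$; letting $\delta'\uparrow\delta$ gives $\limsup C_{\delta'}\leq C_\delta+\epsilon$, and with $C_{\delta'}\geq C_\delta$ this proves $C_{\delta'}\to C_\delta$, which reproduces \cite[Lemma 2.3]{JJ}.

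\emph{Step 2 (localized almost-optimal paths).} Next, fix $\delta\in J$ at which $C'_\delta$ exists and choose $\delta_n\uparrow\delta$. For each $n$ take $\gamma_n\in\Gamma_{\delta_n}\subset\Gamma_\delta$ with $\max_t I_{\delta_n}(\gamma_n(t))\leq C_{\delta_n}+(\delta-\delta_n)$ and set $T_n:=\{\,t\in[0,1]: I_\delta(\gamma_n(t))\geq C_\delta-(\delta-\delta_n)\,\}$. For $t\in T_n$,
$$B(\gamma_n(t))=\frac{I_{\delta_n}(\gamma_n(t))-I_\delta(\gamma_n(t))}{\delta-\delta_n}\leq\frac{C_{\delta_n}-C_\delta}{\delta-\delta_n}+2,$$
whose right-hand side stays bounded as $n\to\infty$ because $C'_\delta$ exists; consequently $A(\gamma_n(t))=I_\delta(\gamma_n(t))+\delta B(\gamma_n(t))$ is bounded for $t\in T_n$ as well (note $I_\delta(\gamma_n(t))\leq C_{\delta_n}+(\delta-\delta_n)$ there). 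Since $Au\to\infty$ or $Bu\to\infty$ as $\|u\|\to\infty$, the set of $u$ on which both $A$ and $B$ remain below these bounds is norm-bounded; hence there is $R>0$, independent of $n$, with $\|\gamma_n(t)\|\leq R$ for every $t\in T_n$ and all large $n$. Moreover $C_{\delta_n}\to C_\delta$, so $\max_t I_\delta(\gamma_n(t))\to C_\delta$.

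\emph{Step 3 (extracting a bounded Palais--Smale sequence) and the main difficulty.} I would then argue by contradiction: if $I_\delta$ admitted no bounded sequence $\{x_n\}$ with $I_\delta(x_n)\to C_\delta$ and $I'_\delta(x_n)\to 0$ in $X^*$, there would be $\alpha,\epsilon_0>0$ with $\epsilon_0<\min\{C_\delta,\ C_\delta-I_\delta(0)\}$ (this last quantity being positive in all applications, where $A(0)=B(0)=0$) such that $\|I'_\delta(u)\|_{X^*}\geq\alpha$ whenever $\|u\|\leq R+2$ and $|I_\delta(u)-C_\delta|\leq\epsilon_0$. Since $I_\delta\in C^1(X)$ admits a pseudo-gradient field on that region, the quantitative deformation lemma provides $\epsilon\in(0,\epsilon_0)$ and a continuous $\eta\colon X\to X$ with $\eta=\mathrm{id}$ off $\{|I_\delta-C_\delta|<\epsilon_0\}\cap\{\|u\|\leq R+2\}$, $I_\delta\circ\eta\leq I_\delta$, and $I_\delta(\eta(u))\leq C_\delta-\epsilon$ whenever $\|u\|\leq R+1$ and $I_\delta(u)\leq C_\delta+\epsilon$. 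For $n$ large, $\max_t I_\delta(\gamma_n(t))\leq C_\delta+\epsilon$ and $\|\gamma_n(t)\|\leq R$ on $T_n$, so $I_\delta(\eta(\gamma_n(t)))\leq C_\delta-\epsilon$ for $t\in T_n$ while $I_\delta(\eta(\gamma_n(t)))\leq I_\delta(\gamma_n(t))<C_\delta-(\delta-\delta_n)$ for $t\notin T_n$; also $\eta(\gamma_n(0))=0$ (as $I_\delta(0)<C_\delta-\epsilon_0$) and $I_\delta(\eta(\gamma_n(1)))\leq I_\delta(\gamma_n(1))<0$, so $\eta\circ\gamma_n\in\Gamma_\delta$ with $\max_t I_\delta(\eta(\gamma_n(t)))<C_\delta$, contradicting the definition of $C_\delta$. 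Hence for a.e.\ $\delta\in J$ there is a bounded Palais--Smale sequence at level $C_\delta$. The one genuinely delicate point is Step 2 — converting the mere a.e.\ differentiability of the monotone function $\delta\mapsto C_\delta$ into a uniform-in-$n$ norm bound on the near-optimal paths over their high sublevel sets; this is precisely where the affine dependence $I_\delta=A-\delta B$ and the coercivity of $A$ or $B$ are used, while the deformation argument and the endpoint bookkeeping in Step 3 are routine.
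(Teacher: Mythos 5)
Your proposal is correct and follows essentially the same route as the paper, which does not prove this theorem itself but quotes Jeanjean's monotonicity trick (\cite[Theorem 1.1]{JJ} for the bounded Palais--Smale sequences and \cite[Lemma 2.3]{JJ} for left-continuity); your Steps 1--3 are a faithful reconstruction of that argument, with the only new bookkeeping being the free endpoint condition $I_\delta(\gamma(1))<0$, which you handle correctly via $\Gamma_{\delta_n}\subset\Gamma_\delta$ and $I_\delta(\eta(\gamma_n(1)))\leq I_\delta(\gamma_n(1))<0$. The one implicit extra hypothesis you use, $I_\delta(0)<C_\delta$ so that $\eta$ fixes the origin, is indeed needed and is automatic in the paper's setting since $I_\delta(0)=0<C_\delta$.
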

Now, we are ready to reformulate our problem. We define another functional $J:X\to \R$ by
\begin{align*}
     J(v)=\frac{1}{p}\int H(Dv)^p+ V(x)|f(v)|^p]dx-\frac{\lambda}{q+1}\int |f(v)|^{q+1} dx.
 \end{align*}
 which is a $C^1$ functional, whose derivative is given by
\begin{align}
    \langle J'(v),w\rangle=\int H(Dv)^{p-1}\nabla H(Dv).\nabla Dw dx +\int V|f(v)|^{p-2}f(v)f'(v)w dx - \lambda \int |f(v)|^{q-1}f(v)f'(v)w dx
\end{align}
If $v$ is a critical point of $J$ then $v$ satisfies the following equation in the weak sense
\begin{align}\label{maineq2}
    -\Delta_{H,p}v+V(x)|f(v)|^{p-2}f(v)f'(v)-\frac{\lambda}{q+1}|f(v)|^{q-1}f(v)f'(v)=0.
\end{align}
Thus $u=f(v)$ is a solution of (\ref{maineq}). Our aim is to find a critical point of $J$ in $X$. Firstly, we present Poho$\breve{z}$aev type identity corresponding to (\ref{maineq2}), and for that, we need the following lemma. 
\begin{lem}\label{Bdd}
Let $v\in W^{1,p}(\R^N)$ be a solution of (\ref{maineq2}) with $q\in \Pi$. Then $v\in L^\infty(\R^N)$.
\end{lem}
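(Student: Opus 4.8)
The plan is to run a Moser-type iteration on the weak formulation of \eqref{maineq2}, adapted to the anisotropic operator $\Delta_{H,p}$ and to the nonlinearity modulated by $f$. The first step is to use Remark \ref{equiv}, i.e. the equivalence $A\|x\|\le H(x)\le B\|x\|$, to pass freely between $H(Dv)^p$ and $|\nabla v|^p$; in particular the energy bound $\int |\nabla v|^p\,dx<\infty$ and $v\in W^{1,p}(\R^N)$ give us $f(v)\in L^{2\alpha p^*}(\R^N)$ by Lemma \ref{P}(v),(ix) together with \eqref{LL6}, while $v\in W^{1,p}$ gives $v\in L^{p^*}$. The exponent set $\Pi$ is precisely engineered so that each of the five terms appearing after testing lands in a summable range; the role of $\Pi$ is to guarantee the first (bootstrap-starting) integrability.

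Next, for $L>0$ and $\beta\ge 1$ I would test \eqref{maineq2} against $w = v\,v_L^{p(\beta-1)}$ where $v_L=\min\{|v|,L\}$ (the standard truncation that keeps $w\in W^{1,p}$ and lets us send $L\to\infty$ by Fatou at the end). On the left-hand side, the ellipticity/convexity of $H$ (properties (H3)–(H4)) produces a good term controlling $\int |\nabla(v\,v_L^{\beta-1})|^p$ from below, up to the usual $\beta$-dependent constant; this is where one uses that $H(Dv)^{p-1}\nabla H(Dv)\cdot\nabla v = H(Dv)^p\ge 0$ and the $1$-homogeneity of $H$. On the right-hand side we must bound
\[
\int V|f(v)|^{p-2}f(v)f'(v)\,v\,v_L^{p(\beta-1)}\,dx
\quad\text{and}\quad
\lambda\int |f(v)|^{q-1}f(v)f'(v)\,v\,v_L^{p(\beta-1)}\,dx .
\]
For the potential term I would use $0\le V\le V(\infty)$ and Lemma \ref{P}(vi), which gives $|f(v)|^{p-2}f(v)f'(v)\,v\le \tfrac{1}{2\alpha}|f(v)|^p\le C|v|^p$ — harmless, it is absorbed. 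The genuine work is the $q$-term: using Lemma \ref{P}(ii),(v),(ix) one estimates $|f(v)|^{q-1}f(v)f'(v)\,v$ by a combination of powers of $|v|$; the membership $q\in\Pi$ ensures these powers sit in the Lebesgue spaces already known to contain $v$, so Hölder's inequality closes the first step and one obtains $v\in L^{s_1}$ for some $s_1>p^*$.

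Then I would iterate: feeding $v\in L^{s_k}$ back into the tested inequality with a suitable $\beta=\beta_k$ and invoking the anisotropic Sobolev inequality (equivalent to the Euclidean one by Remark \ref{equiv}) yields $v\in L^{s_{k+1}}$ with $s_{k+1}=\tfrac{p^*}{p}\,s_k$ up to controlled constants, and one tracks the product of constants $\prod_k C_k^{1/\beta_k}$ to check it stays bounded — this is the classical Moser bookkeeping and gives $\|v\|_{L^{s_k}}$ bounded uniformly, hence $v\in L^\infty(\R^N)$ after $L\to\infty$. The main obstacle I anticipate is precisely verifying that the nonlinear term controlled by $f$ does not stall the iteration: because $f(v)\sim |v|^{1/(2\alpha)}$ for large $v$, the effective growth of $|f(v)|^{q-1}f(v)f'(v)v$ is like $|v|^{q/(2\alpha)}$, and one must check that $q/(2\alpha) < p^*/p \cdot (\text{something})$ in the right sense at every stage — this is exactly what the upper endpoints in the definition of $\Pi$ encode, so the bookkeeping must be done carefully rather than invoked. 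A secondary (minor) technical point is justifying the test function manipulation rigorously with the truncation $v_L$ and passing to the limit, and handling the anisotropic left-hand side term $\int H(Dv)^{p-1}\nabla H(Dv)\cdot\nabla w\,dx$ with $w=v\,v_L^{p(\beta-1)}$ — here property (H5) and the chain rule for $H$ keep all constants dimensionally controlled.
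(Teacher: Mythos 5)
Your proposal matches the paper's strategy: a Moser-type iteration with truncated power test functions (your $v\,v_L^{p(\beta-1)}$ plays the role of the paper's $v_m$, which equals $v|v|^{p(s-1)}$ where $|v|^{s-1}\leq m$ and $m^pv$ otherwise), the potential term dropped by sign since $V\geq 0$, the nonlinearity controlled via the $f$-bounds of Lemma \ref{P} together with H\"older and Sobolev, and the membership $q\in\Pi$ ensuring the iteration exponent $\sigma=p^*/(pr)$ exceeds $1$ so the Moser bookkeeping closes. The one thing you leave implicit and the paper makes explicit is that the five disjoint intervals making up $\Pi$ correspond to five different factorizations of $|f(v)|^{q}f'(v)\,v_m$ before H\"older is applied (using $|f(t)|\leq|t|$, $|f(t)|\leq C|t|^{1/(2\alpha)}$, $f'\leq 1$, or $|f'(t)|\leq C|f(t)|^{1-2\alpha}$ in various combinations), each one closing the H\"older step in a different $q$-regime; also, a minor slip: Lemma \ref{P}(vi) gives $tf'(t)\leq f(t)$, so the potential term satisfies $|f(v)|^{p-2}f(v)f'(v)v\leq|f(v)|^p$ rather than $\leq\tfrac{1}{2\alpha}|f(v)|^p$, though it is simply discarded anyway.
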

\begin{proof}
For each $m\in \N$ and $s>1$, we define $A_m=\{x\in\R^N| |u(x)|^{s-1}\leq m\}$, $B_m=\R^N\setminus A_m$ and let us consider two functions
\begin{align}
    v_m=\begin{cases}
    v|v|^{p(s-1)} \text{ if } x\in A_m\\
    m^pv         \text{ if } x\in B_m  
\end{cases}
\end{align}
and
\begin{align}
    w_m=\begin{cases}
    v|v|^{(s-1)} \text{ if } x\in A_m\\
    mv         \text{ if } x\in B_m  
\end{cases}
\end{align}
So we have that $v_m\in W^{1,p}(\R^N)$, $|v_m|\leq |v|^{ps-p+1}$, $||v|^{p-1}v|=|w_m|^p\leq m^p|v|^p$, $|w_m|\leq |v|^s$, 
\begin{align}
    \nabla v_m=\begin{cases}
    (ps-p+1)|v|^{p(s-1)}\nabla v \text{ if } x\in A_m\\
    m^p\nabla v         \text{ if } x\in B_m  
\end{cases}
\end{align}
and 
\begin{align}
    \nabla w_m=\begin{cases}
    s|v|^{s-1}\nabla v \text{ if } x\in A_m\\
    m\nabla v         \text{ if } x\in B_m  
\end{cases}
\end{align}
We also have 
\begin{align}\label{M0}
    \int H(Dw_m)^p dx&=\int_{A_m} H(s|v|^{s-1}Dv)^p dx + \int_{B_m} H(mDv)^p dx\nonumber\\
    &=\int_{A_m} s^p |v|^{p(s-1)}H(Dv)^p dx + m^p\int_{B_m} H(Dv)^p dx
\end{align}
and 
\begin{align}\label{M1}
    \int H(Dv)^{p-1}\nabla H(Dv).Dv_m dx&=(ps-p+1)\int_{A_m} |v|^{p(s-1)}H(Dv)^{p-1}\nabla H(Dv).Dv dx + m^p\int_{B_m} H(Dv)^p dx\nonumber\\
    &=(ps-p+1)\int_{A_m} |v|^{p(s-1)}H(Dv)^p dx + m^p\int_{B_m} H(Dv)^p dx
\end{align}
As a consequence of (\ref{M1}), we get
\begin{align}\label{M2}
    \int_{A_m} |v|^{p(s-1)}H(Dv)^p dx\leq\frac{1}{ps-p+1}\int H(Dv)^{p-1}\nabla H(Dv).Dv_m dx
\end{align}
 Using (\ref{M0}) and (\ref{M1}), we derive
 \begin{align}\label{M3}
   \int H(Dw_m)^p dx=\int H(Dv)^{p-1}\nabla H(Dv).Dv_m dx+(s^p-ps+p-1) \int_{A_m} |v|^{p(s-1)}H(Dv)^p dx
 \end{align}
 Consider $v_m$ as a test function and using the definition of weak solution, we obtain
 \begin{align}\label{M4}
     \int H(Dv)^{p-1}\nabla H(Dv).Dv_m=\int [\lambda|f(v)|^{q-1}-V(x)|f(v)|^{p-2}]f(v)f'(v)v_m dx
 \end{align}

Using (\ref{M2})-(\ref{M4}), we obtain
\begin{align}\label{M5}
   \int H(Dw_m)^p&dx+s^p\int V(x)|f(v)|^{p-2}f(v)f'(v)v_m dx=\int H(Dv)^{p-1}\nabla H(Dv).Dv_m dx+(s^p-ps+p-1)\nonumber\\ &\int_{A_m} |v|^{p(s-1)}H(Dv)^p dx+s^p\int V(x)|f(v)|^{p-2}f(v)f'(v)v_m dx\nonumber\\
   &\leq [\frac{s^p-ps+p-1}{ps-p+1}+1]\int H(Dv)^{p-1}\nabla H(Dv).Dv_m dx+s^p\int V(x)|f(v)|^{p-2}f(v)f'(v)v_m dx\nonumber\\
   &\leq s^p[\int H(Dv)^{p-1}\nabla H(Dv).Dv_m dx+\int V(x)|f(v)|^{p-2}f(v)f'(v)v_m dx]\nonumber\\
   &=s^p\lambda\int |f(v)|^{q-1}f(v)f'(v)v_m dx
\end{align}
 since $f'(t)\leq 1$, (\ref{M5}) implies 
\begin{align*}
     \int H(Dw_m)^p dx\leq s^p\lambda\int |f(v)|^q|v_m| dx
\end{align*}
By using the facts that $|f(t)|\leq |t|$, $|f(t)|\leq \frac{1}{2^{2\alpha p}}|t|^\frac{1}{2\alpha}$ and $||v|^{p-1}v_m|=|v_m|^p$, we have 
\begin{align}\label{M6}
     \int H(Dw_m)^p dx\leq s^p\lambda\int |f(v)|^{p-1}|f(v)|^{q-p+1}|v_m| dx &\leq s^p\lambda\int |v|^{p-1}|v_m||v|^\frac{q-p+1}{2\alpha} dx\nonumber\\&\leq s^p\lambda\int |w_m|^p|v|^\frac{q-p+1}{2\alpha} dx
\end{align}

Thus, it follows from H\"{o}lder inequality and $|w_m|\leq |v|^s$, that
\begin{align}\label{M7}
    \int H(Dw_m)^p dx\leq s^p\lambda (\int |w_m|^{pr} dx)^\frac{1}{r}(\int|v|^\frac{(q-p+1)r'}{2\alpha})^\frac{1}{r'}\leq s^p\lambda (\int |v|^{spr} dx)^\frac{1}{r}(\int|v|^{p^*} dx)^\frac{1}{r'}
\end{align}
where we choose $r>0$ such that $$\frac{(q-p+1)r'}{2\alpha}=p^*$$
Now, by applying the Sobolev's inequality and the inequality (\ref{M7}), we deduce
\begin{align}
    (\int_{A_m} |w_m|^{p^*})^\frac{p}{p^*}\leq C\int H(Dw_m)^p dx\leq s^p\lambda (\int |v|^{spr} dx)^\frac{1}{r}(\int|v|^{p^*} dx)^\frac{q-p+1}{2\alpha p^*}
\end{align}
Since $|w_m|=|v|^s$ in $A_m$, by using the monotone convergence theorem, we obtain
\begin{align}
    (\int |v|^{sp^*})^\frac{1}{sp^*}\leq  \{C\lambda\}^\frac{1}{sp}s^\frac{1}{s} (\int |v|^{spr} dx)^\frac{1}{spr}(\int|v|^{p^*} dx)^\frac{q-p+1}{2sp\alpha p^*}
\end{align}
that is,
\begin{align}\label{M8}
    ||v||_{L^{sp^*}(\R^N)}\leq (C\lambda)^\frac{1}{sp}s^\frac{1}{s}||v||_{L^{spr}(\R^N)}||v||_{L^{p^*}(\R^N)}^\frac{\tilde{r}}{s}
\end{align}
where $\tilde{r}=\frac{q-p+1}{2\alpha p}$.
We choose $\sigma=\frac{p^*}{pr}$ and observe that $\sigma>1$ if and only if $q<2\alpha p^*-2\alpha p+p-1$.\\
By taking $s=\sigma$ in (\ref{M8}), we obtain
\begin{align}\label{M9}
     ||v||_{L^{\sigma p^*}(\R^N)}\leq (C\lambda)^\frac{1}{\sigma p}\sigma^\frac{1}{\sigma}||v||_{L^{p^*}(\R^N)}||v||_{L^{p^*}(\R^N)}^\frac{\tilde{r}}{\sigma}
\end{align}
putting $s=\sigma^2$ and using (\ref{M9}), we have 
\begin{align}\label{M10}
     ||v||_{L^{\sigma^2p^*}(\R^N)}\leq (C\lambda)^{\frac{1}{p}[\frac{1}{\sigma}+\frac{1}{\sigma^2}]}\sigma^{[\frac{1}{\sigma}+\frac{2}{\sigma^2}]}||v||_{L^{p^*}(\R^N)}^{\tilde{r}[\frac{1}{\sigma}+\frac{1}{\sigma^2}]}
\end{align}

putting $s=\sigma^k$ and continuing the above process, we obtain

\begin{align}\label{M11}
     ||v||_{L^{\sigma^kp^*}(\R^N)} \leq (C\lambda)^{\frac{1}{p} \sum_{i=1}^{k}\frac{1}{\sigma^i}} \sigma^{\sum_{i=1}^{k}\frac{i}{\sigma^i}} ||v||_{L^{p^*}(\R^N)}^{\tilde{r}\sum_{i=1}^{k}\frac{1}{\sigma^i}}
\end{align}
By taking $k\to \infty$, we obtain
\begin{align}
    ||v||_{\infty}<\infty
\end{align}
Thus, we proved that if $p-1<q<2\alpha p^*-2\alpha p+p-1$ then $u\in L^\infty(\R^N)$.

Again, we can easily derive the following inequality from (\ref{M5}) by using the facts that $f'(t)\leq 1$, $|f(t)|\leq \frac{1}{2^{2\alpha p}}|t|^\frac{1}{2\alpha}$ and $||v|^{p-1}v_m|=|v_m|^p$.
\begin{align}\label{M12}
     \int H(Dw_m)^p dx&\leq s^p\lambda 2^\frac{q}{2\alpha p}\int |w_m|^p|v|^{\frac{q}{2\alpha}-p+1} dx \nonumber\\
\end{align}
Similar argument as before we can prove  $u\in L^\infty(\R^N)$ if $2\alpha p-2\alpha<q<2\alpha p^*-2\alpha$.
Since $|f'(t)|\leq \frac{C}{|f(t)|^{2\alpha-1}}$, (\ref{M5}) implies
\begin{align}
     \int H(Dw_m)^p dx\leq Cs^p\lambda\int |f(v)|^{q-2\alpha+1}|v_m| dx
\end{align}
If $q$ satisfies one of the following conditions
\begin{enumerate}[label=(\alph*)]
    \item $p+2\alpha-2<q<2\alpha p^*-2\alpha p+2\alpha-2$
    \item $2\alpha p-1<q<2\alpha p^*-1$
    \item $\frac{p-1}{2\alpha}+2\alpha-1<q<p^*-p+\frac{p-1}{2\alpha}+2\alpha-1$
\end{enumerate}
then by similar argument one can prove that $u\in L^\infty(\R^N)$.
\end{proof}
\begin{lem}\label{PI}
    If $v$ is a critical point of the functional $J$ then $$P_V(v)=\frac{N-p}{p}\int H(Dv)^p dx+ \frac{N}{p}\int V(x)|f(v)|^p dx+\frac{1}{p}\int \langle\nabla V(x), x\rangle |f(v)|^p dx-\frac{\lambda N}{q+1}\int |f(v)|^{q+1} dx=0.$$
    We denote $P_V(v)$ as Poho$\breve{z}$aev identity.
\end{lem}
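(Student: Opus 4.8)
The asserted identity is the Poho$\breve{z}$aev identity attached to the dilations $v\mapsto v(\cdot/t)$, so the plan is the classical one: multiply the Euler--Lagrange equation $\langle J'(v),\cdot\rangle=0$ by a truncation of the generator $x\cdot\nabla v$ of these dilations, integrate by parts, and let the truncation parameter go to infinity. The preparatory step is to upgrade the regularity of $v$. First, a critical point $v\in X\subset W^{1,p}(\R^N)$ solves (\ref{maineq2}) weakly, and since $q\in\Pi$, Lemma \ref{Bdd} gives $v\in L^\infty(\R^N)$. Reinserting this in (\ref{maineq2}) and using the interior regularity theory for the reversible Finsler $p$-Laplacian (De Giorgi--Nash--Moser iteration together with Lieberman-type gradient H\"older estimates, available thanks to $(H1)$--$(H5)$; cf.\ \cite{FC,CX}), we obtain $v\in C^{1,\beta}_{\mathrm{loc}}(\R^N)$ for some $\beta\in(0,1)$, together with enough second-order interior regularity (of the type $H(Dv)^{(p-2)/2}\nabla_\eta H(Dv)\in W^{1,2}_{\mathrm{loc}}$) to justify one integration by parts. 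Combining Remark \ref{equiv}, Lemma \ref{P}$(ii)$, hypothesis $(v_2)$, $v\in W^{1,p}(\R^N)\cap L^\infty(\R^N)$ and Lemma \ref{Comp}$(i)$ (so $f(v)\in L^{q+1}(\R^N)$), one checks that $H(Dv)^p$, $V(x)|f(v)|^p$, $\langle\nabla V(x),x\rangle|f(v)|^p$ and $|f(v)|^{q+1}$ all belong to $L^1(\R^N)$; these global integrabilities are what will allow the truncation radius to be sent to infinity.

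Next, fix $\varphi\in C_c^\infty(\R^N)$ with $0\le\varphi\le1$, $\varphi\equiv1$ on the unit ball and $\varphi\equiv0$ outside the ball of radius $2$, and set $\varphi_R:=\varphi(\cdot/R)$. Using the regularity above (and, when $p\neq2$, a mollification of $v$; see the last paragraph), $w_R:=\varphi_R\,(x\cdot\nabla v)$ is an admissible test function, so $\langle J'(v),w_R\rangle=0$ reads
\begin{equation*}
 0=\int H(Dv)^{p-1}\nabla_\eta H(Dv)\cdot\nabla w_R\,dx+\int V|f(v)|^{p-2}f(v)f'(v)\,w_R\,dx-\lambda\int|f(v)|^{q-1}f(v)f'(v)\,w_R\,dx .
\end{equation*}
In the principal term one writes $\nabla w_R=(x\cdot\nabla v)\nabla\varphi_R+\varphi_R(\nabla v+(D^2v)x)$, uses Euler's identity $\nabla_\eta H(\eta)\cdot\eta=H(\eta)$ to get $H(Dv)^{p-1}\nabla_\eta H(Dv)\cdot\nabla v=H(Dv)^p$, rewrites $H(Dv)^{p-1}\nabla_\eta H(Dv)\cdot(D^2v)x=\tfrac1p\,x\cdot\nabla(H(Dv)^p)$ and integrates by parts once; in the lower-order terms one uses $\nabla_x|f(v)|^p=p|f(v)|^{p-2}f(v)f'(v)\nabla v$, $\nabla_x|f(v)|^{q+1}=(q+1)|f(v)|^{q-1}f(v)f'(v)\nabla v$ and $\mathrm{div}(V(x)x)=NV(x)+\langle\nabla V(x),x\rangle$. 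Collecting terms gives, for every $R>0$,
\begin{multline*}
 \frac{p-N}{p}\int\varphi_R H(Dv)^p\,dx-\frac{N}{p}\int\varphi_R V|f(v)|^p\,dx\\
 -\frac{1}{p}\int\varphi_R\langle\nabla V,x\rangle|f(v)|^p\,dx+\frac{\lambda N}{q+1}\int\varphi_R|f(v)|^{q+1}\,dx=\mathcal E_R,
\end{multline*}
where $\mathcal E_R$ collects all terms carrying $\nabla\varphi_R$ and, using $|x|\,|\nabla\varphi_R|\le C$, Remark \ref{equiv} and the boundedness of $\nabla_\eta H$ (which follows from the $1$-homogeneity of $H$ and $(H2)$), satisfies $|\mathcal E_R|\le C\int_{\{R\le|x|\le2R\}}(H(Dv)^p+V|f(v)|^p+|f(v)|^{q+1})\,dx$.

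By the integrabilities of the first paragraph, $\mathcal E_R\to0$ as $R\to\infty$, and the left-hand side above converges, by dominated convergence, to $-P_V(v)$; hence $P_V(v)=0$. Equivalently, the same computation can be organised through the dilation itself: $v_t:=v(\cdot/t)\in X$ for $t>0$ since $H$ is $1$-homogeneous and $V$ is bounded, the map $t\mapsto J(v_t)$ is $C^1$ (differentiation under the integral for the potential term being legitimate by $(v_2)$) with derivative at $t=1$ equal to $P_V(v)$, and this derivative vanishes because $v$ is a critical point of $J$.

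The main obstacle is entirely in the last two paragraphs: a priori $v$ is only $C^{1,\beta}_{\mathrm{loc}}$, so $x\cdot\nabla v$ is not literally a $W^{1,p}_{\mathrm{loc}}$ test function for the quasilinear operator and the pointwise relation $H(Dv)^{p-1}\nabla_\eta H(Dv)\cdot(D^2v)x=\tfrac1p\,x\cdot\nabla(H(Dv)^p)$ is not available verbatim. One resolves this either by exploiting the second-order interior regularity genuinely available for solutions of $p$-Laplacian-type equations (which suffices for the single integration by parts above), or by regularising --- mollifying $v$, or replacing $\Delta_{H,p}$ by a nondegenerate approximation and letting the regularisation parameter tend to $0$ --- while keeping uniform control of the annular remainders so that the limit $R\to\infty$ is unaffected. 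The anisotropy of $H$ adds only bookkeeping, absorbed cleanly by $H(t\eta)=tH(\eta)$ for $t>0$ and $\nabla_\eta H(\eta)\cdot\eta=H(\eta)$; hypotheses $(H1)$--$(H5)$ supply the necessary regularity input.
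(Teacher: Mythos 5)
Your proof is correct and takes essentially the same route as the paper: both proofs first invoke Lemma \ref{Bdd} to get $v\in L^\infty(\R^N)$ and the regularity result of \cite[Proposition 4.3]{CM} to upgrade to $C^1$, and then pass to the Poho\v{z}aev identity. The only difference is that where the paper ends by citing a general Poho\v{z}aev theorem, \cite[Theorem 1.3]{LS}, you unpack exactly what such a theorem does internally: test the equation against $\varphi_R\,(x\cdot\nabla v)$, integrate by parts using Euler's identity $\nabla_\eta H\cdot\eta=H$, control the annular remainders by $|x|\,|\nabla\varphi_R|\le C$ together with the global integrability of $H(Dv)^p$, $V|f(v)|^p$, $\langle\nabla V,x\rangle|f(v)|^p$ and $|f(v)|^{q+1}$, and send $R\to\infty$. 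That is the same underlying argument, just written out rather than delegated to a reference, and you are right to flag that the main technical point is justifying $x\cdot\nabla v$ as a test direction given only $C^{1,\beta}_{\mathrm{loc}}$ regularity; the standard second-order/regularization devices you mention are exactly what one uses. One small caution: your closing ``alternatively, differentiate $t\mapsto J(v_t)$ and use criticality'' aside is not actually a shortcut, because $\frac{d}{dt}J(v_t)\big|_{t=1}=0$ does not follow formally from $J'(v)=0$ unless the tangent direction $-x\cdot\nabla v$ lies in $X$, which it need not; making that rigorous brings you straight back to the cutoff computation you already did, so it is best presented as heuristics rather than as an alternative proof.
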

\begin{proof}
    The preceding lemma ensures us that $v\in L^\infty(\R^N)$ and using \cite[\text{ Proposition 4.3}]{CM}, we obtain $v\in C^1(\R^N)$. Hence, this result follows from \cite[\text{ Theorem 1.3}]{LS}. 
\end{proof}
\begin{cor}
If V is constant then the Poho$\check{z}$aev identity becomes 
\begin{align}\label{poho}
    P(v)=\frac{N-p}{p}\int H(Dv)^p dx+ \frac{N}{p}\int V(x)|f(v)|^p dx-\frac{\lambda N}{q+1}\int |f(v)|^{q+1} dx=0.
\end{align}
\end{cor}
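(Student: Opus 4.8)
The plan is to read this off directly from Lemma \ref{PI}. When $V$ is constant we have $\nabla V \equiv 0$ on $\R^N$, hence $\langle \nabla V(x), x\rangle \equiv 0$, so the term $\frac{1}{p}\int \langle \nabla V(x), x\rangle\, |f(v)|^p\, dx$ occurring in the expression for $P_V(v)$ simply vanishes. Substituting this into the identity of Lemma \ref{PI} collapses $P_V(v)$ to $\frac{N-p}{p}\int H(Dv)^p\, dx + \frac{N}{p}\int V(x)|f(v)|^p\, dx - \frac{\lambda N}{q+1}\int |f(v)|^{q+1}\, dx$, which is exactly $P(v)$, and the vanishing $P(v)=0$ is inherited from $P_V(v)=0$. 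The finiteness of the integrals involved is not an issue, since $|f(v)|\le |v|\in L^p(\R^N)$ gives $\int V(x)|f(v)|^p\, dx \le V(\infty)\int |v|^p\, dx < \infty$.

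The one bookkeeping point I would make explicit is that Lemma \ref{PI} and the boundedness Lemma \ref{Bdd} it relies on were stated under the running hypotheses on $V$, and a constant potential fails only the clause asking that $V$ not be identically $V(\infty)$. That clause, however, is used nowhere in the derivation of the Pohožaev identity: the proof of Lemma \ref{PI} only invokes (i) $v\in L^\infty(\R^N)$, supplied by Lemma \ref{Bdd} under the hypothesis $q\in\Pi$ and insensitive to whether $V$ is constant; (ii) the upgrade $v\in C^1(\R^N)$ from \cite[Proposition 4.3]{CM}; and (iii) the integral identity of \cite[Theorem 1.3]{LS}, whose hypotheses are met because a constant $V$ satisfies $\langle\nabla V,x\rangle = 0 \in L^\infty(\R^N)\cap L^{N/p}(\R^N)$ and $NV + \langle\nabla V,x\rangle = NV \ge 0$. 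Hence the argument of Lemma \ref{PI} applies verbatim with $V$ constant.

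Since the whole statement reduces to this substitution together with the routine verification just described, there is no genuine obstacle here; the only care needed is to confirm that the regularity and Pohožaev inputs cited inside the proof of Lemma \ref{PI} remain available in the constant-potential setting, which the checks above provide.
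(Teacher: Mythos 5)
Your proposal is correct and follows the same route the paper implicitly takes: the corollary is stated without proof because it is the immediate substitution $\nabla V\equiv 0$ (hence $\langle\nabla V(x),x\rangle\equiv 0$) into the identity of Lemma \ref{PI}, which removes the middle term. Your extra check that the hypotheses of Lemma \ref{Bdd} and Lemma \ref{PI} remain available for a constant potential---in particular that the clause ``$V$ not identically $V(\infty)$'' in $(v_1)$ is never used in deriving the Poho\v{z}aev identity, and that $\langle\nabla V,x\rangle=0$ trivially satisfies $(v_2)$---is careful and sound, but it does not constitute a different method; it is diligence on a step the paper leaves implicit.
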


\section{Auxiliary results}\label{Aux}
In this section, we prove a few auxiliary results that will be repeatedly used in the future. To begin with, we state two important lemmas whose proof can be found in (Bal et al. \cite{FC}) or in the references therein.
\begin{lem}(\cite[ \text{ lemma 2.1 }]{KB})\label{ET}
    Let $x\in \R^n\setminus\{0\}$ and $t\in\R\setminus\{0\}$ then
    \begin{enumerate}[label=(\alph*)]
        \item $x\cdot\nabla_\xi H(x)=H(x)$.
        \item $\nabla_\xi H(tx)=\mbox{sign}(t) \nabla_\xi H(x)$.
        \item $||\nabla_\xi H(x)||\leq C$, for some positive constant $C$.
        \item $H$ is strictly convex.
    \end{enumerate}
\end{lem}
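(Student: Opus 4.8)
\textbf{Proof proposal for Lemma \ref{ET}.}

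The plan is to derive each of the four properties directly from the defining features of the Minkowski norm $H$ listed in (H1)--(H5), exploiting that $H$ is $1$-homogeneous and, by (H3), that $H^2/2$ is smooth and strictly convex off the origin.

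First I would prove (a). Since $H$ is positively homogeneous of degree one, Euler's identity for homogeneous functions gives $x\cdot\nabla_\xi H(x)=H(x)$ at every $x\neq 0$; this is the cleanest route, though one can equivalently differentiate $t\mapsto H(tx)=tH(x)$ at $t=1$. For (b), I would again use homogeneity: differentiating $H(tx)=|t|H(x)$ in $x$ shows $\nabla_\xi H$ is positively homogeneous of degree zero, so $\nabla_\xi H(tx)=\nabla_\xi H(x)$ for $t>0$; for $t<0$ one combines this with the fact that $H$ being a genuine norm is even, $H(-x)=H(x)$, which upon differentiation yields $\nabla_\xi H(-x)=-\nabla_\xi H(x)$. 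Assembling the two cases gives $\nabla_\xi H(tx)=\operatorname{sign}(t)\nabla_\xi H(x)$.

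For (c), the key observation is that $\nabla_\xi H$ is $0$-homogeneous by (b), hence determined by its values on the compact set $\partial\mathscr{B}_1^H=\{H(\xi)=1\}$ (which is compact because $H$ is equivalent to the Euclidean norm by Remark \ref{equiv}). On that set $\nabla_\xi H$ is continuous by (H2), so it is bounded there by some constant $C$, and $0$-homogeneity extends the bound to all of $\R^n\setminus\{0\}$. Finally, for (d), strict convexity of $H$ follows from (H3)--(H4): the Hessian of $H^2/2$ is positive definite away from the origin, and combined with uniform convexity of the unit ball $\mathscr{B}_1^H$ in (H4) one gets that no line segment lies on the unit sphere; equivalently the sublevel sets of $H$ are strictly convex, which is exactly strict convexity of the norm $H$. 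The only mild subtlety — the ``main obstacle'' — is property (d): one must be careful that strict convexity of $H$ as a function is not automatic from $H$ being a norm, so the argument genuinely needs the uniform-convexity hypothesis (H4) (or equivalently the positive-definiteness in (H3)), and I would spell out that a norm whose unit ball is strictly convex is a strictly convex function on rays emanating from the origin, hence strictly convex as claimed. Since the lemma is quoted from \cite{KB, FC}, I would keep the write-up brief and cite those references for the routine verifications.
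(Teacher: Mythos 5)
The paper itself does not prove Lemma~\ref{ET}; it simply cites \cite[Lemma~2.1]{KB} (and the remark preceding it points to \cite{FC} and the references therein for proofs), so there is no internal argument to compare against. Judged on its own, your sketch is essentially the standard one and items (a)--(c) are sound: (a) is Euler's identity for $1$-homogeneous functions, (b) follows cleanly from differentiating $H(tx)=|t|H(x)$ with respect to $x$ (which directly gives $t\,(\nabla_\xi H)(tx)=|t|\nabla_\xi H(x)$, so you do not really need to split into the $t>0$ and $t<0$ cases), and (c) is the usual $0$-homogeneity-plus-compactness argument using Remark~\ref{equiv}.

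The one slip is in your closing sentence for (d): a norm whose unit ball is strictly convex is \emph{not} ``a strictly convex function on rays emanating from the origin'' --- by $1$-homogeneity $H(tx)=tH(x)$ for $t>0$, so $H$ is \emph{linear} on every such ray, and no norm can be strictly convex there. In this literature the statement ``$H$ is strictly convex'' is shorthand for strict convexity of the unit ball $\mathscr{B}_1^H$, equivalently strict inequality in the triangle inequality for linearly independent vectors, or strict convexity of $H$ restricted to affine lines avoiding the origin; that is what (H3)/(H4) deliver, and your main point --- that this is a genuine hypothesis and not automatic for a norm --- is the right one. Just replace the ``strictly convex on rays'' phrase with the correct formulation and the sketch is fine.
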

\begin{lem}(\cite[ \text{ lemma 2.5 }]{KB})\label{ET1}
    Let $2\leq p<\infty$. Then for $x, y\in \R^N$, there exists a positive constant $C$ such that
    \begin{align}\label{IN1}
        \langle H(x)^{p-1}\nabla_\eta H(x)-H(y)^{p-1}\nabla_\eta H(y), x-y\rangle\geq C\;H(x-y)^p.
    \end{align}
\end{lem}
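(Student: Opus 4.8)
The plan is to recognize the left-hand side of \eqref{IN1} as the increment of the gradient of the convex $C^1$ function $\Phi(\xi):=\tfrac1p H(\xi)^p$: since $H$ is $1$-homogeneous, $\nabla_\xi\Phi(\xi)=H(\xi)^{p-1}\nabla_\xi H(\xi)$, so the quantity to be estimated is exactly $\langle\nabla\Phi(x)-\nabla\Phi(y),\,x-y\rangle$. With $z:=x-y$ and $\gamma_t:=(1-t)y+tx$, and using that $\Phi\in C^2(\R^N\setminus\{0\})$ by (H2) while $\Phi\in C^1(\R^N)$ since $p\ge2$, the fundamental theorem of calculus gives
\begin{align*}
\langle\nabla\Phi(x)-\nabla\Phi(y),z\rangle=\int_0^1\langle D^2\Phi(\gamma_t)\,z,z\rangle\,dt ;
\end{align*}
the at most one value $t_0$ with $\gamma_{t_0}=0$ is harmless because the integrand is controlled by $H(\gamma_t)^{p-2}|z|^2$, which either vanishes at $t_0$ (if $p>2$) or stays bounded (if $p=2$). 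So the whole proof reduces to a pointwise lower bound for $\langle D^2\Phi(\xi)\eta,\eta\rangle$ together with an elementary one-dimensional estimate.

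The main step is the claim that $\langle D^2\Phi(\xi)\eta,\eta\rangle\ge c\,H(\xi)^{p-2}|\eta|^2$ for $\xi\ne0$ and all $\eta$, with $c=c(N,H,p)>0$. Differentiating once,
\begin{align*}
\langle D^2\Phi(\xi)\eta,\eta\rangle=(p-1)H(\xi)^{p-2}\langle\nabla H(\xi),\eta\rangle^2+H(\xi)^{p-1}\langle D^2H(\xi)\eta,\eta\rangle ,
\end{align*}
and since $\nabla H$ is $0$-homogeneous and $D^2H$ is $(-1)$-homogeneous, substituting $\xi=H(\xi)\hat\xi$ with $\hat\xi\in\partial\mathscr{B}_1^H$ reduces the claim to bounding $(p-1)\langle\nabla H(\hat\xi),\eta\rangle^2+\langle D^2H(\hat\xi)\eta,\eta\rangle$ below by $c|\eta|^2$ uniformly in $\hat\xi\in\partial\mathscr{B}_1^H$. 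Here I would differentiate Euler's identity $\langle\nabla H(\hat\xi),\hat\xi\rangle=H(\hat\xi)$ (Lemma~\ref{ET}(a)) to get $D^2H(\hat\xi)\hat\xi=0$, note that $\langle\nabla H(\hat\xi),\hat\xi\rangle=1\ne0$ yields the direct-sum splitting $\R^N=\R\hat\xi\oplus\nabla H(\hat\xi)^\perp$, and write $\eta=s\hat\xi+w$ with $w\perp\nabla H(\hat\xi)$. Then $\langle\nabla H(\hat\xi),\eta\rangle=s$, while $D^2H(\hat\xi)\hat\xi=0$ together with the uniform convexity (H4) gives $\langle D^2H(\hat\xi)\eta,\eta\rangle=\langle D^2H(\hat\xi)w,w\rangle\ge\Lambda|w|^2$; hence the expression is $\ge(p-1)s^2+\Lambda|w|^2$, and since $|\hat\xi|\le 1/A$ by Remark~\ref{equiv} one has $|\eta|^2\le 2A^{-2}s^2+2|w|^2$, which closes the claim. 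This is the crux of the argument, and the difficulty is precisely that (H3)--(H4) pin down $D^2H$ only in directions transverse to $\nabla H$, so the homogeneity reduction to $\partial\mathscr{B}_1^H$ and the splitting along $\hat\xi$ are what promote this to genuine uniform ellipticity of $D^2\Phi$ on all of $\R^N$.

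It then remains to note that $\int_0^1 H(\gamma_t)^{p-2}\,dt\ge c'|z|^{p-2}$: since $|z|\le|x|+|y|$, at least one of $|x|,|y|$ is $\ge|z|/2$, say $|x|$, and then for $t\in[\tfrac34,1]$ one checks $|\gamma_t|=|x-(1-t)z|\ge|z|/2-(1-t)|z|\ge|z|/4$, so $H(\gamma_t)\ge A|z|/4$ on that subinterval by Remark~\ref{equiv} (the case $|y|\ge|z|/2$ being symmetric, using $t\in[0,\tfrac14]$). Combining the three displays and invoking the comparability of $H$ with the Euclidean norm once more gives \eqref{IN1} with an explicit $C=C(N,H,p)>0$. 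For $p=2$ all of this can be bypassed: $\tfrac12H^2\in C^2(\R^N\setminus\{0\})$ has a $0$-homogeneous Hessian that is positive definite on the compact set $\partial\mathscr{B}_1^H$ by (H3), hence uniformly elliptic, and \eqref{IN1} follows at once.
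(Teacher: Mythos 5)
Your argument is correct, but it cannot be ``the same as the paper's'' because the paper offers no proof of this inequality at all: Lemma \ref{ET1} is quoted verbatim from the reference [KB, Lemma 2.5], so the only in-paper content is the citation. What you supply is a self-contained derivation, and it is the natural one: writing the left-hand side of \eqref{IN1} as $\langle\nabla\Phi(x)-\nabla\Phi(y),x-y\rangle$ with $\Phi=\tfrac1p H^p$, proving the uniform ellipticity $\langle D^2\Phi(\xi)\eta,\eta\rangle\ge c\,H(\xi)^{p-2}|\eta|^2$ via homogeneity, the identity $D^2H(\hat\xi)\hat\xi=0$ obtained by differentiating Euler's relation, the splitting $\eta=s\hat\xi+w$ with $w\in\nabla H(\hat\xi)^\perp$ (where the symmetry of $D^2H$ kills the cross term), and then the elementary estimate $\int_0^1 H(\gamma_t)^{p-2}\,dt\ge c'\,|x-y|^{p-2}$ along the segment. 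Each step checks out: the direct-sum decomposition is legitimate because $\langle\nabla H(\hat\xi),\hat\xi\rangle=1$, the cross term vanishes by symmetry of the Hessian, (H4) and Remark \ref{equiv} give the uniform lower bound on $\partial\mathscr{B}_1^H$, and the fundamental-theorem step across the (at most one) zero of $\gamma_t$ is justified since $|D^2\Phi|\le CH^{p-2}$ is locally bounded for $p\ge2$, so $t\mapsto\langle\nabla\Phi(\gamma_t),z\rangle$ is Lipschitz — you gesture at this and it is indeed the only delicate point, worth one explicit sentence. Compared with simply invoking [KB], your route buys an explicit constant $C=C(p,A,\Lambda,B)$ and makes visible that only (H1)–(H4) (not (H5)) are needed for \eqref{IN1}; note also that your separate closing remark for $p=2$ is redundant, since the general computation already covers it with $(p-1)=1$.
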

Next, we prove the following lemma, which will assist us in drawing conclusions regarding the pointwise convergence of the gradient of a Palais-Smale sequence of $J$ in $X$.
\begin{lem}\label{PBS}
    Let $p\geq 2$ and define $$T(t)=\begin{cases}
        t\; \text{ if }\; |t|\leq 1\\
        \frac{t}{|t|} \;\mbox{ otherwise }
    \end{cases}$$
    and assume that $[H1]-[H5]$ hold. Let $\{v_n\}$ be a sequence in $D^{1,p}(\R^N)$ such that $v_n\rightharpoonup v$ in $D^{1,p}(\R^N)$ and for every $\phi\in C^\infty_c(\R^N)$, $$ \int \phi (H(Dv_n)^{p-1}\nabla H(Dv_n)-H(Dv)^{p-1}\nabla H(Dv)).\nabla T(v_n-v) dx\rightarrow 0.$$
    Then up to a subsequence, the following conclusions hold:
    \begin{enumerate}[label=(\alph*)]
        \item $Dv_n\rightarrow Dv$ a.e. in $\R^N$.
        \item $\lim_{n\rightarrow\infty}[||H(Dv_n)||_{L^p}^p-||H(Dv_n-Dv)||_{L^p}^p]=||H(Dv)||_{L^p}^p$.
        \item  $H(Dv_n)^{p-1}\nabla H(Dv_n)-H(Dv_n-Dv)^{p-1}\nabla H(Dv_n-Dv)\rightarrow H(Dv)^{p-1}\nabla H(Dv)$\;\mbox{in}\; $L^{\frac{p}{p-1}}(\R^N)$.
    \end{enumerate}
\end{lem}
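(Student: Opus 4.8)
The plan is to adapt the classical Boccardo–Murat truncation argument for quasilinear operators to the anisotropic setting, exploiting the strong monotonicity inequality in Lemma~\ref{ET1}. The hypothesis already gives, for each fixed $\phi\in C_c^\infty(\R^N)$, that the localized ``energy gap'' $\int \phi\,\langle H(Dv_n)^{p-1}\nabla H(Dv_n)-H(Dv)^{p-1}\nabla H(Dv),\nabla T(v_n-v)\rangle\,dx\to 0$. First I would fix a nonnegative $\phi\in C_c^\infty(\R^N)$ equal to $1$ on a ball $B_R$, and write $w_n=v_n-v$. On the set $\{|w_n|\le 1\}$ we have $\nabla T(w_n)=\nabla w_n$, and on $\{|w_n|>1\}$ the integrand involving $\nabla T(w_n)=0$ contributes only through the $\nabla w_n$ appearing inside $\nabla T$; the point is that the nonnegative integrand $e_n:=\phi\,\langle H(Dv_n)^{p-1}\nabla H(Dv_n)-H(Dv)^{p-1}\nabla H(Dv),\nabla w_n\rangle\mathbf 1_{\{|w_n|\le1\}}$ is dominated by the given quantity (up to terms that vanish because $v_n\rightharpoonup v$ in $D^{1,p}$, hence $\nabla v_n\rightharpoonup\nabla v$ in $L^p$, and $H(Dv)^{p-1}\nabla H(Dv)\in L^{p/(p-1)}$ is a fixed function tested against the weakly-null $\nabla w_n\mathbf1_{\{|w_n|>1\}}$). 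Thus $\int_{B_R} e_n\,dx\to 0$.

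By Lemma~\ref{ET1}, $e_n\ge C\,\phi\,H(Dw_n)^p\mathbf1_{\{|w_n|\le1\}}$ pointwise, so $\int_{B_R}H(Dv_n-Dv)^p\mathbf1_{\{|w_n|\le1\}}\,dx\to 0$; since $R$ is arbitrary and $H$ is comparable to the Euclidean norm (Remark~\ref{equiv}), this gives $\nabla w_n\mathbf1_{\{|w_n|\le1\}}\to 0$ in $L^p_{loc}$, hence a.e. along a subsequence. Combined with $w_n\to 0$ a.e. (which follows from Rellich compactness: $v_n\rightharpoonup v$ in $D^{1,p}$ implies $v_n\to v$ in $L^p_{loc}$, so a.e. up to a subsequence), the set $\{|w_n|>1\}$ shrinks to a null set a.e., and we conclude $Dv_n\to Dv$ a.e. in $\R^N$, which is conclusion (a).

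For (b) and (c) I would invoke the Brezis–Lieb lemma in the form adapted to the vector field $\xi\mapsto H(\xi)^p$ and $\xi\mapsto H(\xi)^{p-1}\nabla H(\xi)$. Since $\{Dv_n\}$ is bounded in $L^p$ and $Dv_n\to Dv$ a.e., and since $H(\cdot)^p$ is continuous and $p$-homogeneous with the growth bound coming from (H2)–(H3) and Lemma~\ref{ET}(c), the standard Brezis–Lieb splitting yields $\|H(Dv_n)\|_{L^p}^p-\|H(Dv_n-Dv)\|_{L^p}^p\to\|H(Dv)\|_{L^p}^p$, which is (b). For (c), the map $A(\xi)=H(\xi)^{p-1}\nabla H(\xi)$ is continuous and positively $(p-1)$-homogeneous with $|A(\xi)|\le C|\xi|^{p-1}$; the a.e. convergence $Dv_n\to Dv$ gives $A(Dv_n)-A(Dv_n-Dv)\to A(Dv)$ a.e., and a Vitali/Brezis–Lieb-type equi-integrability argument in $L^{p/(p-1)}$ — using that $\{A(Dv_n)\}$ and $\{A(Dv_n-Dv)\}$ are bounded in $L^{p/(p-1)}$ and the difference is controlled by $|Dw_n|^{p-2}|Dv|+|Dv|^{p-1}$-type terms that are equi-integrable — upgrades this to strong $L^{p/(p-1)}$ convergence, giving (c).

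The main obstacle is the first step: correctly handling the truncation $T$ so that the passage from the hypothesized quantity to the genuinely nonnegative monotonicity integrand $e_n$ is justified, i.e.\ controlling the cross terms on $\{|w_n|>1\}$ and the contribution of the fixed field $H(Dv)^{p-1}\nabla H(Dv)$ against $\nabla w_n$. This is where the weak convergence $\nabla v_n\rightharpoonup\nabla v$ in $L^p$ and the local strong $L^p$ convergence of $v_n$ must be combined carefully; once the nonnegative localized energy is shown to vanish, the rest is a fairly mechanical application of Lemma~\ref{ET1} and Brezis–Lieb.
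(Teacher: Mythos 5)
Your proposal is correct and follows essentially the same route as the paper: for (a) the truncation structure plus the monotonicity inequality of Lemma \ref{ET1}, Remark \ref{equiv} and local strong convergence of $v_n$; for (b) the Brezis--Lieb lemma; and for (c) the difference estimate for $\xi\mapsto H(\xi)^{p-1}\nabla H(\xi)$ (this is where (H5) enters) upgraded to $L^{\frac{p}{p-1}}$ convergence. The only deviations are cosmetic: the paper handles (a) by splitting $\mathrm{supp}\,\phi$ into $\{|v_n-v|\le 1\}$ and $\{|v_n-v|>1\}$ and using an $s$-power trick with $s\in(0,1)$ on the untruncated monotonicity quantity rather than your direct indicator-removal, and it implements (c) via the auxiliary function $\Psi_{\epsilon,n}=[\,|G(Dv_n)-G(Dv_n-Dv)-G(Dv)|-\epsilon H(Dv_n)^{p-1}]^+$ and dominated convergence instead of Vitali; also, since $\nabla T(v_n-v)$ vanishes on $\{|v_n-v|>1\}$, the hypothesized integral already equals your localized truncated energy, so the ``cross terms'' you worry about do not actually arise.
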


\begin{proof}
\begin{enumerate}[label=(\alph*)]
    \item  Let us define $w_n=(H(Dv_n)^{p-1}\nabla H(Dv_n)-H(Dv)^{p-1}\nabla H(Dv)).(\nabla v_n-\nabla v)\geq 0$. Let $\phi\in C_c^\infty(\R^N)$ be a nonnegative function and $\Omega=\mbox{supp}(\phi)$. Without loss of generality, we can assume $v_n\rightarrow v$ in $L^p(\Omega)$ and $v_n\rightarrow v$ almost everywhere in $\R^N$. Using the given condition and H\"{o}lder inequality, we have for every $s\in(0, 1)$
\begin{align}\label{sa}
    0\leq \int (\phi w_n)^s dx&\leq \int_{K_n} (\phi w_n)^s dx +\int_{L_n} (\phi w_n)^s dx\nonumber\\
    &\leq |K_n|^{1-s}(\int_{K_n}\phi w_n dx)^s+ |L_n|^{1-s}(\int_{L_n}\phi w_n dx)^s\nonumber\\
    &\leq |\Omega|^{1-s} o(1)+o(1)
\end{align}
where $K_n=\{x\in \Omega: |u_n(x)-u(x)|\leq 1\}$ and $L_n=\{x\in \text{supp}(\phi): |u_n(x)-u(x)|>1\}$.\\ From (\ref{sa}) one has $w_n\rightarrow 0$ a.e in $\R^N$. Hence remark \ref{equiv} and lemma \ref{ET1} ensure $Dv_n\rightarrow Dv$ almost everywhere in $\R^N$.
\item It follows from Brezis-Leib lemma \cite{BL}.
\item Define $G:\R^N\to\R^N$ by $$G(x)=H(x)^{p-1}\nabla H(x)$$ and let  $$G_i(x)=H(x)^{p-2}HH_{x_i}.$$ By using lemma \ref{ET} and $(H4)$, we have
\begin{align}
    |G(x+h)-G(x)|=|\int_{0}^{1}\frac{d}{dt}[G(x+th)] dt|&\leq C\int_{0}^{1} H(x+th)^{p-2}|h| dt\nonumber\\
    &\leq C\int_{0}^{1} [H(x)^{p-2}+t^{p-2}H(h)^{p-2}]|H(h)| dt\nonumber\\
    &\leq C[H(x)^{p-2}H(h)+H(h)^{p-1}]\leq \epsilon H(x)^{p-1}+C_\epsilon H(h)^{p-1}\nonumber
\end{align}
where $\epsilon>0$ be any real number and $C_\epsilon>$ is a constant. Finally, we get \begin{align}\label{Y1}
    |G(x+h)-G(x)|< \epsilon H(x)^{p-1}+C_\epsilon H(h)^{p-1}
\end{align}
We define $$\Psi_{\epsilon,n}:=[|G(Dv_n)-G(Dv_n-Dv)-G(Dv)|-\epsilon H(Dv_n)^{p-1}]^+.$$
Clearly, $\Psi_{\epsilon,n}\rightarrow 0$ as $n\rightarrow\infty$ almost everywhere in $\R^N$. Using (\ref{Y1}) and Remark \ref{equiv}, we have $$\Psi_{\epsilon,n}\leq G(Dv)+C_\epsilon H(Dv)^{p-1}\leq C H(Dv)^{p-1}.$$
By the Dominated Convergence Theorem, we get
\begin{align}\label{Y2}
    \lim\int |\psi_{\epsilon,n}|^\frac{p}{p-1}dx=0.
\end{align}
From the definition of $\phi_{\epsilon,n}$ 
\begin{align*}
    |G(Dv_n)-G(Dv_n-Dv)-G(Dv)|\leq \Psi_{\epsilon,n}+\epsilon H(Dv_n)^{p-1}
\end{align*}
Since $\{v_n\}$ is bounded in $D^{1,p}(\R^N)$, using (\ref{Y2}) we conclude
$$\limsup_{n\to \infty}\int |G(Dv_n)-G(Dv_n-Dv)-G(Dv)|^\frac{p}{p-1} dx\leq \epsilon M$$
for some $M>0$. As $\epsilon>0$ is arbitrary so (c) follows.
\end{enumerate}
\end{proof}
\begin{rmk}
\begin{enumerate}[label=(\roman*)]
    \item The condition (H5) is not required to conclude $(a)$ and $(b).$
    \item  The above result is true for $1<p<\infty$ if $H=H_2$.
\end{enumerate}   
\end{rmk}
\begin{lem}\label{BL}
 Suppose that $p=2$ and $2\alpha\leq p$; or $p\geq 2\alpha+1$ hold 
 and $G$ is function from $\R$ to $\R$ such that $G(t)=|f(t)|^{p-2}f(t)f'(t).$ If $v_n\rightharpoonup v$ in $X$ then
       $$\lim_{n\to \infty}\int |G(v_n)-G(v_n-v)-G(v)|^\frac{p}{p-1} dx=0.$$
\end{lem}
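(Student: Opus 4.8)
The plan is to run the Brezis--Lieb splitting argument, reducing every estimate on $G$ to the structural properties of $f$ collected in Lemma~\ref{P}. First I would record that, since $v_n\rightharpoonup v$ in $X$, the sequences $\{v_n\}$ and $\{g_n:=v_n-v\}$ are bounded in $X$; hence, by Lemma~\ref{LL}(i), \eqref{LL5}, \eqref{LL6} and interpolation, $\{f(g_n)\}$ is bounded in $L^p(\R^N)$, while the limit satisfies $f(v)\in L^s(\R^N)$ for every $s\in[p,2\alpha p^*]$; and, passing to a subsequence and applying Rellich's theorem on balls, $v_n\to v$ and $g_n\to 0$ a.e. in $\R^N$. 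Since $f\in C^2$ and $p\ge 2$ in both cases of the hypothesis, $G$ is continuous with $G(0)=0$, so $G(v_n)-G(g_n)-G(v)\to G(v)-0-G(v)=0$ a.e.

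The engine of the proof is an elementary pointwise inequality for $G$. I would use the identity $G=\tfrac1p\big(|f|^p\big)'$: then $G\in C^1$ with $G'(t)=|f(t)|^{p-2}\big(f(t)f''(t)+(p-1)f'(t)^2\big)$, and the computation in the proof of Lemma~\ref{P}(viii) together with $p\ge 2\alpha$ yields $0\le f(t)f''(t)+(p-1)f'(t)^2\le C$, hence $|G'(t)|\le C|f(t)|^{p-2}$. Because $f$ is odd and concave on $[0,\infty)$ with $f(0)=0$ it is subadditive there, so $|f(a+b)|\le|f(a)|+|f(b)|$; feeding this into the mean value theorem and using $p\ge 2$ gives
\[|G(s+t)-G(s)|\le\int_0^1|G'(s+\theta t)|\,|t|\,d\theta\le C\big(|f(s)|^{p-2}+|f(t)|^{p-2}\big)|t|,\]
while $|f(t)|\le|t|$ and $f'\le 1$ give $|G(t)|\le|f(t)|^{p-1}\le|f(t)|^{p-2}|t|$. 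When $p=2$ the bracket above is merely bounded --- this is where $2\alpha\le p$ is used --- so $G$ is Lipschitz, $|G(v_n)-G(g_n)-G(v)|\le C|v|\in L^2(\R^N)$, and the claim follows at once from the dominated convergence theorem; I would therefore assume from now on that $p>2$ (hence, under the hypotheses, $p\ge 2\alpha+1$).

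To finish, fix $\epsilon>0$. Taking $s=g_n$, $t=v$ in the inequality above, bounding $|G(v)|\le|f(v)|^{p-2}|v|$, and applying Young's inequality with conjugate exponents $\tfrac{p-1}{p-2}$ and $p-1$ to the term $C|f(g_n)|^{p-2}|v|$, I obtain
\[|G(v_n)-G(g_n)-G(v)|\le\epsilon|f(g_n)|^{p-1}+w_\epsilon,\qquad w_\epsilon:=C_\epsilon|v|^{p-1}+C|f(v)|^{p-2}|v|.\]
Using $v\in L^p(\R^N)$, property (x) of Lemma~\ref{P} (so that $|f(v)|^{p-2}|v|\le M_1|f(v)|^{p-1}+M_2|f(v)|^{p-2+2\alpha}$) and the membership $f(v)\in L^{2\alpha p^*}(\R^N)$, one checks $w_\epsilon\in L^{p/(p-1)}(\R^N)$. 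Then $h_{n,\epsilon}:=\big[\,|G(v_n)-G(g_n)-G(v)|-\epsilon|f(g_n)|^{p-1}\,\big]^{+}$ satisfies $0\le h_{n,\epsilon}\le w_\epsilon$ and $h_{n,\epsilon}\to 0$ a.e., so $\int h_{n,\epsilon}^{p/(p-1)}\,dx\to 0$ by dominated convergence; and since $|G(v_n)-G(g_n)-G(v)|\le h_{n,\epsilon}+\epsilon|f(g_n)|^{p-1}$ with $\sup_n\int|f(g_n)|^p\,dx=:M<\infty$,
\[\limsup_{n\to\infty}\int|G(v_n)-G(g_n)-G(v)|^{\frac{p}{p-1}}\,dx\le C\,\epsilon^{\frac{p}{p-1}}M,\]
and letting $\epsilon\to0$ gives the conclusion. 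I expect the main obstacle to be the pointwise control $|G'(t)|\le C|f(t)|^{p-2}$ and the subadditivity $|f(a+b)|\le|f(a)|+|f(b)|$: this is exactly where $p\ge 2$ and $p\ge 2\alpha$ are indispensable (for $p<2$ the factor $|f|^{p-2}$ blows up at the origin, and for $p<2\alpha$ the bracket $f f''+(p-1)f'^2$ may change sign), and the $p=2$ endpoint genuinely requires $2\alpha\le p$ to keep that bracket bounded. The remaining points --- $w_\epsilon\in L^{p/(p-1)}(\R^N)$ and the uniform bound $\sup_n\int|f(g_n)|^p\,dx<\infty$ --- are routine integrability bookkeeping resting on Lemma~\ref{P}(v),(ix),(x), \eqref{LL5}, \eqref{LL6} and the Sobolev embedding.
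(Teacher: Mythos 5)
Your argument is correct and is essentially the paper's intended proof: the paper omits the details by saying the proof is the same as that of Lemma \ref{PBS}, and your write-up is precisely that adaptation — a pointwise estimate on $G$ (here via $|G'(t)|\le C|f(t)|^{p-2}$ from the Lemma \ref{P}(viii) computation, subadditivity of $f$ and Young's inequality, in place of \eqref{Y1}), followed by the positive-part truncation $[\,|G(v_n)-G(v_n-v)-G(v)|-\epsilon|f(v_n-v)|^{p-1}]^+$, dominated convergence, and letting $\epsilon\to 0$ using the $L^p$-boundedness of $f(v_n-v)$. The separate Lipschitz treatment of $p=2$ and the integrability bookkeeping via Lemma \ref{P}(x) are correct fillings-in of details the paper leaves implicit, so there is no substantive difference in approach.
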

\begin{proof}
    The proof is the same as that of the previous lemma. we omit it here.
\end{proof}
\begin{lem}\label{AL}
     Let us consider the function $h:(0, \infty)\to \R$,\; $h(t)=C_1t^{N-p}+t^N(C_2-\lambda C_3)$, where $C_1, C_2,\; \mbox{and}\; C_3$ are positive constants and $N>p$. Then for large enough $\lambda>0$, $h$ has a unique critical point, which corresponds to its maximum.
\end{lem}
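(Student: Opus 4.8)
The plan is to reduce the statement to an elementary one-variable computation by first making the leading coefficient negative. Fix $\lambda > C_2/C_3$, so that $C_2 - \lambda C_3 < 0$, and set $A := C_1 > 0$ and $B := \lambda C_3 - C_2 > 0$. Then on $(0,\infty)$ the function reads $h(t) = A t^{N-p} - B t^N$, and the claim is that it has a unique critical point which is its global maximum.

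Next I would differentiate and factor: $h'(t) = A(N-p)t^{N-p-1} - BN t^{N-1} = t^{N-p-1}\bigl[A(N-p) - BN t^p\bigr]$. Since $t^{N-p-1} > 0$ for every $t > 0$ (this holds irrespective of the sign of the exponent $N-p-1$), the zeros of $h'$ are precisely the solutions of $A(N-p) = BN t^p$. Because $N > p$, the left-hand side is a positive constant and $t \mapsto BN t^p$ is a strictly increasing bijection of $(0,\infty)$ onto itself, so there is exactly one solution, namely $t_0 = \bigl(A(N-p)/(BN)\bigr)^{1/p}$.

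Finally I would read off the sign of $h'$ from the factorization: the bracket $A(N-p) - BN t^p$ is strictly positive for $t < t_0$ and strictly negative for $t > t_0$, so $h$ is strictly increasing on $(0,t_0)$ and strictly decreasing on $(t_0,\infty)$. Hence $t_0$ is the only critical point, and since $h(t)\to 0$ as $t\to 0^+$ and $h(t)\to -\infty$ as $t\to\infty$, the point $t_0$ is the global maximum of $h$ on $(0,\infty)$.

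I do not expect any genuine obstacle: this is a routine calculus fact. The only points that need care are recording the explicit threshold $\lambda > C_2/C_3$ (without which, when $C_2 - \lambda C_3 \ge 0$, the function $h$ is monotone increasing and has no critical point, so the "large $\lambda$" hypothesis is essential), and observing that $t^{N-p-1}$ never vanishes on $(0,\infty)$ so that the factorization genuinely isolates all critical points.
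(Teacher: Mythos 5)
Your argument is correct and complete: fixing $\lambda > C_2/C_3$, factoring $h'(t)=t^{N-p-1}\bigl[C_1(N-p)-(\lambda C_3-C_2)Nt^p\bigr]$, and reading off the sign change gives the unique critical point $t_0=\bigl(C_1(N-p)/((\lambda C_3-C_2)N)\bigr)^{1/p}$, which is the global maximum. The paper omits the proof as "very simple," and your computation is exactly the routine calculus argument intended there, with the useful added precision of the explicit threshold $\lambda > C_2/C_3$.
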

\begin{proof}
The proof is very simple and is omitted here.
\end{proof}
\begin{rmk}
    Notice that if $h$ has a critical point $t_0>0$ then $C_2-\lambda C_3<0$ and $h(t_0)=\max_{t>0}h(t).$ 
\end{rmk}

We introduce the Poho$\check{z}$aev manifold 
$$M=\{v\in X_r\setminus\{0\}: P(v)=0\}$$
where $P_V$ is defined in (\ref{poho}).

\begin{lem}\label{NA}
    Let $N\geq 2$, $(\alpha, p)\in D_N$ and $p-1<q<2\alpha p^*-1$. Then 
\begin{enumerate}[label=(\roman*)]
    \item For any $v\in X_r\setminus\{0\}$, there exists unique $t_0=t_0(v)>0$ such that $v_{t_0}=v(\frac{.}{t_0})\in M$. Moreover, $J(v_{t_0})=\max_{t>0}J(v_t).$ 
    \item $0\notin \partial M$ and $\inf_{v\in M} J(v)>0.$
    \item For any $v\in M$, $P'(v)\neq 0.$
    \item $M$ is a natural constraint of $J$.
\end{enumerate}
\end{lem}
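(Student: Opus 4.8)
The natural strategy is the fibering method applied to the dilation $v_t:=v(\cdot/t)$. First I would record the scaling relations $\int H(Dv_t)^p\,dx=t^{N-p}\int H(Dv)^p\,dx$ and $\int|f(v_t)|^r\,dx=t^{N}\int|f(v)|^r\,dx$ (any $r>0$), so that, with the shorthand $A_1=\int H(Dv)^p$, $A_2=\int|f(v)|^p$, $A_3=\int|f(v)|^{q+1}$, one has
$$\phi_v(t):=J(v_t)=\tfrac{1}{p}A_1\,t^{N-p}+\Big(\tfrac{V}{p}A_2-\tfrac{\lambda}{q+1}A_3\Big)t^{N},$$
which is exactly the function $h$ of Lemma~\ref{AL} with $C_1=\tfrac{1}{p}A_1>0$, $C_2=\tfrac{V}{p}A_2\ge 0$ and $C_3=\tfrac{1}{q+1}A_3>0$. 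A one-line differentiation also yields the pivotal identity $P(v_t)=t\,\phi_v'(t)$, so that $v_t\in M$ precisely when $t$ is a critical point of $\phi_v$.

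For (i), provided $\lambda$ is large enough that the coefficient $\tfrac{V}{p}A_2-\tfrac{\lambda}{q+1}A_3$ of $t^N$ is negative (automatic when $V\equiv 0$), Lemma~\ref{AL} gives a unique critical point $t_0=t_0(v)>0$ of $\phi_v$, which is its global maximum; hence $v_{t_0}$ is the unique dilation of $v$ lying on $M$ and $J(v_{t_0})=\phi_v(t_0)=\max_{t>0}\phi_v(t)=\max_{t>0}J(v_t)$. For (ii), substituting $P(v)=0$ into $J$ collapses everything to the useful formula $J(v)=\tfrac{1}{N}\int H(Dv)^p\,dx$ on $M$, so it suffices to show $M$ is bounded away from $0$. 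From $P(v)=0$ one reads off $\tfrac{N-p}{p}\int H(Dv)^p\le \tfrac{\lambda N}{q+1}\int|f(v)|^{q+1}$, and $\int|f(v)|^{q+1}$ is controlled by a power of $\|v\|$ strictly above $p$: from the ODE for $f'$ together with Lemma~\ref{P}(ix) one gets $f^{\gamma}(v)\in D^{1,p}(\R^N)$ with $\|\nabla f^{\gamma}(v)\|_{L^p}\le C\|\nabla v\|_{L^p}$ for $\gamma\in[1,2\alpha]$, hence $\|f(v)\|_{L^{\gamma p^*}}\le C\|\nabla v\|_{L^p}^{1/\gamma}$; interpolating these with $\|f(v)\|_{L^p}\le\|v\|_{L^p}$ and using $p<q+1<2\alpha p^*$ produces $\int|f(v)|^{q+1}\le C\|v\|^{\theta}$ with $\theta>p$, while Lemma~\ref{P}(x) handles $\int|v|^p$ similarly. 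Plugging back into the Pohozaev relation forces $\|v\|\ge\delta>0$ on $M$, whence $0\notin\partial M$ and $\inf_M J=\tfrac{1}{N}\inf_M\int H(Dv)^p>0$.

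For (iii), differentiating $P(v_t)=t\,\phi_v'(t)$ at $t=1$ and using $\phi_v'(1)=P(v)=0$ gives the clean value $\frac{d}{dt}P(v_t)|_{t=1}=\phi_v''(1)=-(N-p)\int H(Dv)^p<0$ for every $v\in M$. Since this derivative equals $\langle P'(v),w\rangle$ for the dilation vector field $w=\frac{d}{dt}v_t|_{t=1}=-x\cdot\nabla v$, we conclude $P'(v)\ne 0$, so $M=P^{-1}(0)\setminus\{0\}$ is a $C^1$ manifold. For (iv), if $v$ is a critical point of $J|_M$, the Lagrange multiplier rule gives $J'(v)=\mu P'(v)$ for some $\mu\in\R$; pairing with the same $w$ and using $\langle J'(v),w\rangle=\frac{d}{dt}J(v_t)|_{t=1}=P(v)=0$ yields $0=\mu\big(-(N-p)\int H(Dv)^p\big)$, and since $\int H(Dv)^p>0$ we obtain $\mu=0$, i.e. $J'(v)=0$.

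The step I expect to be genuinely delicate is the lower bound in (ii): extracting from $\int|f(v)|^{q+1}$ a power of $\|v\|$ strictly above $p$ needs interpolation tailored to whether $q+1$ lies below or above $p^*$, and it must be run in the two ambient spaces corresponding to $V>0$ and $V\equiv0$, carrying the $\int|v|^p$ term along via Lemma~\ref{P}(x). A smaller technical point in (iii)--(iv) is justifying that $\frac{d}{dt}P(v_t)$ is genuinely the pairing of $P'(v)$ with $-x\cdot\nabla v$: for general $v\in X$ this calls for a density/approximation argument, although once $v$ is an actual critical point it is immediate from the $C^1\cap L^\infty$ regularity furnished by Lemmas~\ref{Bdd} and~\ref{PI}.
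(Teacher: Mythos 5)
Your parts (i) and the ``$0\notin\partial M$'' half of (ii) follow the same route as the paper: the fibering map $\phi_v(t)=J(v_t)$ coincides with Lemma~\ref{AL}'s model function, $P(v_t)=t\phi_v'(t)$, and the interpolation estimate (the paper's inequality (\ref{ineq1})) bounds $\int|f(v)|^{q+1}$ by a power of $\beta^p=\int[H(Dv)^p+V|f(v)|^p]$ strictly above $1$, forcing $\beta\geq\delta$ on $M$.

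However, there are two genuine gaps. First, in (ii) the step from ``$\|v\|\geq\delta$ on $M$'' to ``$\inf_M J=\tfrac1N\inf_M\int H(Dv)^p>0$'' does not follow: a bound below on $\beta^p=\int H(Dv)^p+\int V|f(v)|^p$ does not by itself bound $\int H(Dv)^p$ away from zero, since the potential term could carry all the mass. The paper instead argues by contradiction: a minimizing sequence on $M$ with $J(v_n)\to 0$ has $\int H(Dv_n)^p=NJ(v_n)\to 0$; boundedness of the sequence plus (\ref{ineq1}) then drives $\int|f(v_n)|^{q+1}\to 0$, whence $\int V|f(v_n)|^p\to 0$, so $\beta_n\to 0$, contradicting $0\notin\partial M$. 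You need this extra loop.

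Second, and more substantively, your proofs of (iii) and (iv) rest on the identity $\tfrac{d}{dt}P(v_t)\big|_{t=1}=\langle P'(v),-x\cdot\nabla v\rangle$ (and similarly for $J$). This requires both that $w=-x\cdot\nabla v$ belongs to $X$ and that $t\mapsto v_t$ is differentiable into $X$, neither of which holds for a general $v\in M$, nor even for a solution of the Euler equation: the regularity of Lemmas~\ref{Bdd} and~\ref{PI} gives $v\in C^1\cap L^\infty$ but says nothing about the integrability of $x\cdot\nabla v$, so the pairing you invoke need not make sense as a duality bracket. Calling this ``a smaller technical point'' or ``immediate'' undersells it. The paper's route sidesteps the problem: assuming $P'(v)=0$ (resp.\ $J'(v)=\tau P'(v)$), one reads off that $v$ weakly solves a quasilinear equation; Lemma~\ref{Bdd} and the cited regularity then license the corresponding Pohozaev identity (equations (\ref{P3}) resp.\ (\ref{P4})), which is established via cutoff arguments in the reference \cite{LS} rather than by a naive chain rule. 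Combining these scalar identities with (\ref{P1})--(\ref{P2}) (resp.\ (\ref{P5})--(\ref{P6})) yields $\int H(Dv)^p=0$ or $J(v)=0$ (resp.\ $\tau=0$), each a contradiction. Your computation $\phi_v''(1)=-(N-p)\int H(Dv)^p$ is correct and is morally the same information, but to make it a proof you should pass through the Pohozaev-identity machinery rather than the ill-defined pairing with $-x\cdot\nabla v$.
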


\begin{proof}
\begin{enumerate}[label=\roman*)]
    \item  Let $v\in X_r\setminus\{0\}$. For $t>0$, we define $v_t(x)=v(\frac{x}{t})$. Now, consider the function $$\phi:(0, \infty)\to \R\;\mbox{by}\; \phi(t)=J(v_t).$$ After simplification, we have $$\phi(t)=\frac{t^{N-p}}{p}\int H(Dv)^p dx+ \frac{t^N}{p}\int V(x)|f(v)|^p dx-\frac{\lambda t^N}{q+1}\int |f(v)|^{q+1} dx.$$
By lemma\ref{AL}, for large $\lambda>0$, there exists $t_0>0$ such that $\phi'(t_0)=0$ and $\phi(t_0)=\max_{t>0}\phi(t).$ Also we notice that $P(v_{t_0})=t_0\phi'(t_0)=0$. Hence, $v_{t_0}\in M$ and $J(v_{t_0})=\max_{t>0}J(v_t).$
\item  If $v\in M$ then
\begin{align*}
    \frac{N-p}{p}\int [H(Dv)^p+V|f(v)|^p]dx-\frac{\lambda N}{q+1}\int |f(v)|^{q+1} dx\leq P(v)=0
\end{align*}
which ensures 
\begin{align}\label{Ineq}
    \frac{N-p}{p}\int [H(Dv)^p+V|f(v)|^p]dx\leq \frac{\lambda N}{q+1}\int |f(v)|^{q+1} dx
\end{align}
Now, let $\int [H(Dv)^p+V|f(v)|^p]dx=\beta^p$ and $\gamma>0$ (to be determined later).  By using H\"{o}lder inequality and Sobolev inequality, we obtain 
\begin{align}\label{ineq1}
    \int |f(v)|^{q+1} dx\leq \int |f(v)|^{q+1} dx&\leq [\int |f(v)|^p dx]^\frac{\gamma(q+1)}{p} [\int |f^{2\alpha}(v)|^{p^*}]^{1-\frac{\gamma(q+1)}{p}} \nonumber\\
    &\leq C [\int |f(v)|^p dx]^\frac{\gamma(q+1)}{p} [\int |Dv|^p dx]^{\frac{p^*}{p}(1-\frac{\gamma(q+1)}{p})}\nonumber\\
    &\leq C [\int |f(v)|^p dx]^\frac{\gamma(q+1)}{p} [\int H(Dv)^p dx]^{\frac{p^*}{p}(1-\frac{\gamma(q+1)}{p})}\\
    &\leq C \beta^m\nonumber
\end{align}
where $\gamma=\frac{p(2\alpha p^*-q-1)}{(q+1)(2\alpha p^*-p)}$ and $m=\frac{2\alpha p p^*-pq-p+p^*(q+1-p)}{2\alpha p^*-p}>p.$ As $m>p$ so by using the above inequality and (\ref{Ineq}), we get $\beta\geq C$, for some positive constant $C$. Hence, $0\notin \partial M$. Notice that if $v\in M$ then $NJ(v)-P(v)=\int H(Dv)^p dx>0$. So, $J(v)>0$, for all $v\in M$. We shall prove that $\inf_{v\in M} J(v)>0.$ If not then there exists a sequence $\{v_n\}$ in $M$ such that $J(v_n)\to 0.$ We can prove that the sequence $\{v_n\}$ is bounded (see the proof of Theorem \ref{TC}). Using (\ref{ineq1}) and $\lim_{n\to\infty}\int H(Dv_n)^p dx=\lim_{n\to\infty} NJ(v_n)=0$, we get 
\begin{align}\label{f1}
    \int |f(v_n)|^{q+1} dx\to 0 \text{ as } n\to \infty
\end{align}
Now (\ref{f1}) and $\lim_{n\to\infty}J(v_n)=0$ implies $0\in \partial M$, which is a contradiction.

\item If possible let $P'(v)=0$, for some $v\in M$. As $v\in M$ and $r=J(v)>0$ so
\begin{align}\label{P1}
    \frac{N-p}{p}\int H(Dv)^p dx+ \frac{N}{p}\int V|f(v)|^p dx-\frac{\lambda N}{q+1}\int |f(v)|^{q+1} dx=0
    \end{align}
    and
    \begin{align}\label{P2}
    \frac{1}{p}\int [H(Dv)^p+ V|f(v)|^p]dx-\frac{\lambda}{q+1}\int |f(v)|^{q+1} dx=r
\end{align}
As $P'(v)=0$ so $v$ satisfies the following equation in weak sense
$$-(N-p)\Delta_{H,p}v+NV|f(v)|^{p-2}f(v)f'(v)-\lambda N|f(v)|^{q-1}f(v)f'(v)=0.$$
Hence $v$ satisfies the corresponding Poho$\Check{z}$aev identity:
\begin{align}\label{P3}
     \frac{(N-p)^2}{p}\int H(Dv)^p dx+ \frac{N^2}{p}\int V|f(v)|^p dx-\frac{\lambda N^2}{q+1}\int |f(v)|^{q+1} dx=0
\end{align}
For simplicity, we assume $a=\int H(Dv)^p dx$, $b=\int V|f(v)|^p dx$, $c=\int |f(v)|^{q+1} dx$. By using (\ref{P1}), (\ref{P3}) and $p<N$, we have $a=0$. From (\ref{P1}), we obtain $$\frac{b}{p}=\frac{\lambda c}{q+1}.$$
By using the above result, (\ref{P2}) gives us $r=0$, which is a contradiction as $r>0$.\\
\item Let $v\in M$ such that $J(v)=\inf_{u\in M}J(u)=r>0$ (say). Our Claim is $J'(v)=0$ in $X^*$. By Lagrange's multiplier, there exists a $\tau\in \R$ such that $J'(v)=\tau P'(v)$.
So, $v$ satisfies the following equation in the weak sense
\begin{align}
    -(1-\tau(N-p))\Delta_{H,p}v+(1-\tau N)V|f(v)|^{p-2}f(v)f'(v)+\lambda(\tau N-1)|f(v)|^{q-1}f(v)f'(v)=0.
\end{align}
Hence, $v$ satisfies the corresponding Poho$\Check{z}$aev identity. Using the same notation as before we have the following equations 
\begin{align}\label{P4}
    \frac{(N-p)(1-\tau(N-p))}{p}a+\frac{(1-\tau N)N}{p}b-\frac{N\lambda(1-\tau N)}{q+1}c=0.
\end{align}
\begin{align}\label{P5}
     \frac{a}{p}+\frac{b}{p}-\frac{\lambda c}{q+1}=r.
\end{align}
and
\begin{align}\label{P6}
    \frac{N-p}{p}a+\frac{N}{p}b-\frac{\lambda N}{q+1}c=0.
\end{align}
If $\tau\neq 0$ then by using (\ref{P4}), (\ref{P5}) and (\ref{P6}), we have $r=0$; which contradicts the fact that $r>0$. Hence, $\tau=0$. Consequently, $J'(v)=0$ in $X'$.
\end{enumerate}
\end{proof}
Let us consider a collection of auxiliary functional on $X$,\;
$\{J_\delta\}_{\delta \in I}$ of the form
\begin{align}\label{AF1}
    J_\delta(v)=\frac{1}{p}\int H(Dv)^p+ V(x)|f(v)|^p]dx-\frac{\lambda\delta}{q+1}\int |f(v)|^{q+1} dx
\end{align}
and we define
\begin{align}\label{AF2}
    J_{\infty,\delta}(v)= \frac{1}{p}\int H(Dv)^p+ V(\infty)|f(v)|^p]dx-\frac{\lambda \delta}{q+1}\int |f(v)|^{q+1} dx.
\end{align}

\begin{lem}\label{NL}
  Assume the potential $V$ satisfies $(v_1)$. Then the set  $$\Gamma_\delta=\{\gamma\in C([0, 1]; X): \gamma(0)=0,\; J_\delta(\gamma(1))<0\}\neq \{0\},\;\mbox{for any}\; \delta\in I.$$
\end{lem}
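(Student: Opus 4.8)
The plan is to show that the mountain-pass geometry is non-degenerate by exhibiting, for each fixed $\delta \in I$, a single path along which $J_\delta$ eventually becomes negative. The natural candidate is a dilation path: fix any nonzero $w \in C_c^\infty(\R^N)$ (which lies in $X$ since, under $(v_1)$, $X = W^{1,p}(\R^N)$ by Remark \ref{eqv}), and for $t>0$ consider $w_t(x) = w(x/t)$. Using the scaling identities already computed in the proof of Lemma \ref{NA}(i), one has
\[
J_\delta(w_t) = \frac{t^{N-p}}{p}\int H(Dw)^p\,dx + \frac{t^N}{p}\int V(x)|f(w_t)|^p\,dx - \frac{\lambda\delta\, t^N}{q+1}\int |f(w)|^{q+1}\,dx,
\]
wait — more carefully, the potential term does not scale cleanly when $V$ is non-constant, so instead I would bound it: since $0 \le V(x) \le V(\infty) < \infty$ by $(v_1)$ and $|f(w_t)| \le |w_t|$ by Lemma \ref{P}(ii), the term $\frac1p\int V(x)|f(w_t)|^p\,dx \le \frac{V(\infty)}{p}\,t^N\int |w|^p\,dx$. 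Thus
\[
J_\delta(w_t) \le \frac{t^{N-p}}{p}\int H(Dw)^p\,dx + t^N\Bigl(\frac{V(\infty)}{p}\int |w|^p\,dx - \frac{\lambda\delta}{q+1}\int |f(w)|^{q+1}\,dx\Bigr).
\]

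The key point is that the coefficient of $t^N$ can be made negative. This requires $\int |f(w)|^{q+1}\,dx > 0$, which holds for $w \not\equiv 0$ since $f$ is a strictly increasing bijection with $f(0)=0$; and it requires choosing $\delta$-dependence correctly — but $\delta \in I \subset \R^+$ is bounded away from $0$ only if $I$ is, so to be safe I would first rescale $w \mapsto Rw$ for large $R$: as $R \to \infty$, Lemma \ref{P}(vii) gives $\int |f(Rw)|^{q+1}\,dx \ge C\int_{\{|Rw|\ge 1\}} |Rw|^{(q+1)/(2\alpha)}\,dx$, which grows like $R^{(q+1)/(2\alpha)}$, while $\int |Rw|^p\,dx = R^p\int|w|^p\,dx$; since $q+1 > p$ forces... hmm, one needs $(q+1)/(2\alpha) > p$, i.e. $q > 2\alpha p - 1$, which is not always assumed. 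So instead I would keep $w$ fixed and simply note that for any fixed $\delta > 0$ (with $\lambda$ already chosen large as in the standing hypotheses of Theorems \ref{TC}–\ref{TV}), the quantity $\frac{\lambda\delta}{q+1}\int |f(w)|^{q+1}\,dx$ is a fixed positive number; if it happens to exceed $\frac{V(\infty)}{p}\int |w|^p\,dx$ we are done, and if not, replace $w$ by $Rw$ with $R$ small: as $R \to 0^+$, $f(Rw) \sim Rw$ by Lemma \ref{P}(iii), so both the $|f(Rw)|^{q+1}$ term and the $|Rw|^p$ term are comparable to powers of $R$, and since $q+1$ can be on either side of $p$ this still does not obviously work in one regime.

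Given these case distinctions, the cleanest route — and the one I would actually carry out — is to fix $w \not\equiv 0$ in $C_c^\infty$ and define the path $\gamma(s) = w_{T s}$ for $s \in [0,1]$ with $T$ to be chosen, setting $\gamma(0) = 0$ (interpreting $w_0 = 0$, which is continuous into $X$ since $\|w_t\| \to 0$ as $t \to 0^+$ by the scaling of $\int H(Dw_t)^p$ and dominated convergence for the potential term). Then $J_\delta(\gamma(1)) = J_\delta(w_T)$, and from the displayed upper bound, since $N > p$ the $t^N$ term dominates as $t \to \infty$; the coefficient of $t^N$ is $\frac{V(\infty)}{p}\int|w|^p - \frac{\lambda\delta}{q+1}\int|f(w)|^{q+1}$, but if this is nonnegative we instead use that $\int V(x)|f(w_t)|^p\,dx$ can actually be made $o(t^N)$ by a further rescaling argument, or — simplest of all — invoke Lemma \ref{P}(vii) directly to get $\int V(x)|f(w_t)|^p\,dx \le V(\infty)\int|f(w_t)|^p$ and bound $\int |f(w_t)|^p$ against $\int |f(w_t)|^{q+1}$ plus a lower-order term on the sublevel set where $|w_t| \le 1$. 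Once the $t^N$-coefficient is negative for a suitable fixed choice of $w$ (which exists because one is free to first dilate the \emph{amplitude} of $w$ and $q+1 \ne p$ generically, and in the borderline case $q+1=p$ is excluded by $q > p-1$ together with $q \ge 2\alpha p - 1 > p-1$... ), pick $T$ large enough that $J_\delta(w_T) < 0$. The main obstacle is precisely this bookkeeping: making the coefficient of the top-order dilation term strictly negative \emph{uniformly is not needed} — it suffices for one $w$ and each fixed $\delta$ — so I expect the real work is just choosing the amplitude of $w$ correctly using Lemma \ref{P}(iii),(vii) to beat the potential term, after which the conclusion $\Gamma_\delta \ne \{0\}$ is immediate since the constructed $\gamma$ is a nonconstant element of $\Gamma_\delta$.
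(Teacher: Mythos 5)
Your overall strategy (a dilation path $t\mapsto w(\cdot/t)$, bounding the potential term via $V(x)\le V(\infty)$ from $(v_1)$, and letting the $t^N$-terms dominate the $t^{N-p}$ gradient term) is exactly the paper's strategy, but your write-up never actually closes the one step the whole lemma hinges on: showing that the coefficient of $t^N$, i.e.\ something of the form $\frac{V(\infty)}{p}\int |w|^p\,dx-\frac{\lambda\delta}{q+1}\int |f(w)|^{q+1}\,dx$, is strictly negative for some admissible $w$. You explicitly flag that your two attempted fixes (amplitude rescaling $w\mapsto Rw$ with $R$ large or $R$ small) run into exponent obstructions, and you then assert the needed negativity "exists" on grounds ($q+1\neq p$ "generically", free choice of amplitude) that are not an argument. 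As it stands this is a genuine gap, not a routine omission: everything after it ("pick $T$ large") depends on it. The paper resolves it with no case analysis at all by using the standing hypothesis that $\lambda$ is large: fix one $v\in X\setminus\{0\}$; then $\int V(\infty)|f(v)|^p\,dx$ and $\int|f(v)|^{q+1}\,dx$ are fixed positive numbers, and for $\lambda$ large (which is exactly how the threshold $\lambda_0$ in Theorems \ref{TC}--\ref{TV} is calibrated, cf.\ Lemma \ref{AL}) the $t^N$-coefficient of $J_{\infty,\frac12}(v_t)$ is negative, whence $J_{\infty,\frac12}(v_t)\to-\infty$ and $J_\delta(v_{t_0})\le J_{\infty,\frac12}(v_{t_0})<0$ simultaneously for all $\delta\in I$ (here $\delta\ge\frac12$ is used, which also disposes of your worry about $I$ being bounded away from $0$). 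You instead treat $\lambda$ as an unquantified constant and try to tune $w$ against it, which is why you get stuck.

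If you want to salvage your own route without invoking largeness of $\lambda$ relative to the chosen test function, the fix is to not throw away $f$ in the potential term: bound $\int V(x)|f(Rw)|^p\,dx\le V(\infty)\int |f(Rw)|^p\,dx$ and use Lemma \ref{P}(iv): as $R\to\infty$, $\int|f(Rw)|^p\,dx$ grows like $R^{p/(2\alpha)}$ while $\int|f(Rw)|^{q+1}\,dx$ grows like $R^{(q+1)/(2\alpha)}$, and $q+1>p$ always holds since $q>p-1$; so for $R$ large the $t^N$-coefficient is negative for every fixed $\lambda>0$ and $\delta\in I$. Your difficulty came precisely from replacing $|f(Rw)|^p$ by $|Rw|^p$, which scales like $R^p$ and forces the spurious condition $q>2\alpha p-1$. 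The remaining ingredients of your proposal (continuity of the path at $t=0$ via Corollary \ref{BC}, and the definition $\gamma(s)=w_{Ts}$ with $\gamma(0)=0$) are fine and match the paper.
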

\begin{proof}
    For every $v\in X$,
\begin{align}
    J_\delta(v)\leq J_{\infty, \frac{1}{2}}(v)
\end{align}
Now, let $v\in X\setminus \{0\}$,
    $$J_{\infty,\frac{1}{2}}(v_t)= J_{\infty,\frac{1}{2}}(v(\frac{x}{t}))= \frac{t^{N-p}}{p}\int H(Dv)^p dx+\frac{t^N}{p}\int V(\infty)|f(v)|^p]dx-\frac{\lambda t^N}{2(q+1)}\int |f(v)|^{q+1} dx$$
As $\lambda>0$ is large enough so $J_{\infty,\frac{1}{2}}(v_t)\to -\infty$ as $t\to \infty$. Hence, there exists $t_0>0$ such that
$J_{\infty,\frac{1}{2}}(v_{t_0})<0$. Consequently, $J_\delta(v_{t_0})<0$, for all $\delta\in I$.
Define $\gamma:[0,1 ]\to X$ as
$$\gamma(t)=\begin{cases}
    0, \text{ if } t=0\\
    (v_{t_0})_t, \text{ if } 0<t\leq 1
\end{cases}$$
It is easy to prove that the $\gamma$ is continuous. Hence, $\gamma$ is a desired path.
\end{proof}

\begin{lem}
    The above collection of functional $J_\delta$ satisfies all the hypotheses of Theorem \ref{MP}.
\end{lem}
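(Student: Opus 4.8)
The plan is to verify the three structural requirements of Theorem \ref{MP} directly from the definition \eqref{AF1} of $J_\delta$. Write $J_\delta(v) = A(v) - \delta B(v)$ where
\[
A(v) = \frac{1}{p}\int \big[H(Dv)^p + V(x)|f(v)|^p\big]\,dx, \qquad B(v) = \frac{\lambda}{q+1}\int |f(v)|^{q+1}\,dx .
\]
First I would record that $A$ and $B$ are $C^1$ on $X$: this follows from the fact, already established around \eqref{TF}, that $J$ itself is $C^1$ with the stated derivative, since $A = J_1 + B$ and each piece is handled by the same computation (using Lemma \ref{P}(ii),(v),(ix) to control the Nemytskii maps $v\mapsto |f(v)|^{p}$ and $v\mapsto |f(v)|^{q+1}$, together with Lemma \ref{Comp} for continuity of $v\mapsto f(v)$ into the relevant $L^s$ spaces). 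The functional $B$ is manifestly non-negative, so that hypothesis is immediate.

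Next I would check the coercivity-type alternative: either $A(v)\to\infty$ or $B(v)\to\infty$ as $\|v\|\to\infty$. Here I would argue that $A(v)\to\infty$. Indeed $\|v\| = \|H(Dv)\|_{L^p} + \inf_{\eta>0}\frac1\eta\{1+\int V|f(\eta v)|\}$, and the first term is controlled by $A(v)^{1/p}$ up to constants via Remark \ref{equiv}; for the second term, when $V$ is the constant potential relevant to Theorem \ref{TC} one uses Remark \ref{eqv}, so that $\|v\|$ is equivalent to $\|v\|_{1,p,\R^N}$ and $\int V|f(v)|^p \sim \int V|f(v)|^p$ together with the gradient term dominates a power of the norm; more precisely one bounds the second term in the norm by $C(1 + \int V|f(v)|^p)^{1/p} \le C(1 + pA(v))^{1/p}$ using $s^{1/p}\le 1+s$. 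Hence $\|v\|\to\infty$ forces $A(v)\to\infty$, which is what is required. (The same estimate works in the general-$V$ case of Theorem \ref{TV} using Lemma \ref{LL}(i) to absorb the $\int V|f(v)|^p$ term into $\|\nabla v\|_{L^p}$ plus the infimum term.)

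The substantive step is the mountain-pass geometry, i.e.\ showing $C_\delta := \inf_{\gamma\in\Gamma_\delta}\max_{t\in[0,1]}J_\delta(\gamma(t)) > 0$ for every $\delta\in I$. The nontriviality $\Gamma_\delta\neq\{0\}$ is exactly Lemma \ref{NL}. For the strict positivity I would show that $J_\delta$ has a strict local minimum at $0$ with a positive "ring" value: there exist $\rho>0$ and $c_0>0$ such that $J_\delta(v)\ge c_0$ whenever $\|v\| = \rho$, uniformly in $\delta\in I$ (it suffices to do this for $\delta = \sup I$ since $B\ge0$ makes $J_\delta$ increasing in the relevant direction as $\delta$ decreases). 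For this, bound $A(v)$ below by $c\|v\|^p$ for small $\|v\|$ (using the coercivity estimate of the previous paragraph run in reverse, valid in the small-norm regime where $\int V|f(v)|^p$ is itself small), and bound $B(v)$ above by $C\|v\|^{p}\cdot(\text{small})$ using the Gagliardo–Nirenberg/Sobolev interpolation: from Lemma \ref{Comp}, $v\mapsto f(v)$ maps $X$ into $L^{q+1}$ with $\int|f(v)|^{q+1}\le C\|v\|^{\theta_1}$ for some exponent $\theta_1 > p$ when $q+1 > p$, which holds since $q > p-1$ — here one interpolates between $\|f(v)\|_{L^p}\lesssim\|v\|$ and $\|f(v)\|_{L^{2\alpha p^*}}\lesssim \|v\|^{1/(2\alpha)\cdot 2\alpha}$ coming from \eqref{LL6}. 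Then for $\|v\| = \rho$ small, $J_\delta(v)\ge c\rho^p - C\rho^{\theta_1} \ge \tfrac{c}{2}\rho^p =: c_0 > 0$. Since every $\gamma\in\Gamma_\delta$ starts at $0$ and ends at a point with $J_\delta<0<c_0$, by continuity $\|\gamma(t)\|$ must hit the value $\rho$, so $\max_t J_\delta(\gamma(t))\ge c_0$; taking the infimum over $\gamma$ gives $C_\delta\ge c_0>0$. The main obstacle is making the two-sided control $A(v)\asymp \|v\|^p$ for small $\|v\|$ genuinely uniform and clean given that the norm on $X$ involves an infimum over a scaling parameter; I would isolate this as a short preliminary estimate (essentially repackaging Corollary \ref{BC} and Lemma \ref{LL}(i) quantitatively) before invoking it here.
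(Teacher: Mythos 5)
Your overall strategy is the same as the paper's: identify $A$ and $B$, note $B\ge 0$, establish coercivity of $A$, and prove $C_\delta>0$ by a mountain-pass ring estimate combining a lower bound on the quadratic-like term with the interpolation inequality of Lemma \ref{NA} to control $\int|f(v)|^{q+1}$. (You also re-verify the $C^1$ regularity and spell out the coercivity, which the paper simply asserts; that is harmless extra work.) The genuine difference is your choice of ``ring''. The paper takes the level set
\[
S(\beta)=\Bigl\{u\in X:\ T(u):=\int\bigl[H(Du)^p+V(x)|f(u)|^p\bigr]\,dx=\beta^p\Bigr\},
\]
on which the interpolation estimate (\ref{ineq1}) is expressed \emph{directly} in terms of $\beta$, giving $J_\delta(u)\ge\frac1p\beta^p-C\beta^m$ with $m>p$ at once, and one closes by noting that any $\gamma\in\Gamma_\delta$ must cross $S(\beta)$ for $\beta$ small (since $T$ is continuous, $T(\gamma(0))=0$, and $J_\delta(\gamma(1))<0$ forces $T(\gamma(1))^{(m-p)/p}>1/(pC)$). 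You instead use the sphere $\|v\|=\rho$ in the $X$-norm, which is more intuitive but obliges you to prove a two-sided quantitative equivalence $A(v)\asymp\|v\|^p$ near the origin. You correctly flag this as the nontrivial point; it does hold — the inequality $\inf_{\xi>1}[\xi^{-1}+\xi^{p-1}\int V|f(v)|^p]\lesssim(\int V|f(v)|^p)^{1/p}$, valid when the integral is small, comes from minimizing $\xi^{-1}+L\xi^{p-1}$ exactly as in Remark \ref{eqv} — but it is an extra lemma you would need to state and prove, and it is precisely what the paper's choice of $S(\beta)$ is designed to avoid. So: same method, but your ring forces an additional estimate you have flagged rather than completed; switch to the $T$-level set (as the paper does) and that step disappears.
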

\begin{proof}
    Here, $Au=\frac{1}{p}\int [H(Du)^p+V(x)|f(Du)|^p]\;dx$ and $Bu=\frac{\lambda \delta}{q+1}\int |f(u)|^{q+1}\;dx.$ Clearly, $B$ is nonnegative and $Au\to \infty$ as $||u||\to \infty$.\\ Claim: 
\begin{align}\label{Cdef}
    C_\delta=\inf_{\gamma\in\Gamma} \max_{t\in[0, 1]} J_\delta(\gamma(t))>0.
\end{align}
Let $u\in S(\beta):=\{u\in X: \int [H(Du)^p+V(x)|f(Du)|^p]\; dx=\beta^p$. By using (\ref{ineq1}), we have 
    \begin{align}
        J_\delta(u)&=\frac{1}{p}\int [H(Du)^p+V(x)|f(Du)|^p]\; dx-\frac{\lambda \delta}{q+1}\int |f(u)|^{q+1} \;dx\\
        &\geq \frac{1}{p}\beta^p -C\beta^m
    \end{align}
    where $m>p$. If $\beta>0$ is small enough then there exists $r>0$ such that $J_\delta(u)\geq r$ and hence $C_\delta\geq r>0$.   
\end{proof}
Define the Poho$\check{z}$aev Manifold
$$M_{\infty,\delta}=\{v\in X_r\setminus\{0\} : P_{\infty, \delta}(v)= \frac{N-p}{p}\int H(Dv)^p\; dx+ \frac{N}{p}\int V(\infty)|f(v)|^p\; dx-\frac{\lambda \delta N}{q+1}\int |f(v)|^{q+1}\; dx=0\}.$$
We have the following lemma:
\begin{lem}\label{LM}
    If $N\geq 2$, $(\alpha, p)\in D_N$, $p-1<q<2\alpha p^*-1$ and $\delta\in I$. Then for large $\lambda>0$, there exists $v_{\infty,\delta}\in M_{\infty,\delta}$ such that 
    $$J_{\infty, \delta}(v_{\infty,\delta})=m_{\infty,\delta}:=\{J_{\infty, \delta}(v) : v\neq 0, J'_{\infty, \delta}(v)=0\}.$$
    Moreover, $$J'_{\infty, \delta}(v_{\infty,\delta})=0.$$
\end{lem}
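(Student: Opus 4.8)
The plan is to obtain $v_{\infty,\delta}$ as a minimizer of $J_{\infty,\delta}$ on the Pohozaev manifold $M_{\infty,\delta}$, exploiting that $V(\infty)$ is a positive constant so that Lemma \ref{NA} applies verbatim with $V$ replaced by $V(\infty)$ and $\lambda$ by $\lambda\delta$ (for fixed $\delta\in I$, ``$\lambda\delta$ large'' is the same as ``$\lambda$ large''). That lemma then gives $0\notin\partial M_{\infty,\delta}$, $m_{\infty,\delta}:=\inf_{M_{\infty,\delta}}J_{\infty,\delta}>0$, the scaling property (for each $v\in X_r\setminus\{0\}$ a unique $t_0(v)>0$ with $v(\cdot/t_0)\in M_{\infty,\delta}$ realizing $\max_{t>0}J_{\infty,\delta}(v_t)$), and the fact that $M_{\infty,\delta}$ is a natural constraint. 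Granting that $m_{\infty,\delta}$ is attained, the minimizer $v_{\infty,\delta}$ is a critical point by the natural-constraint property; conversely any nontrivial critical point of $J_{\infty,\delta}$ satisfies the associated Pohozaev identity (Lemma \ref{PI}), hence lies on $M_{\infty,\delta}$, so $m_{\infty,\delta}=\inf\{J_{\infty,\delta}(v):v\neq 0,\ J'_{\infty,\delta}(v)=0\}$ and the statement follows. Thus everything reduces to attainment of $m_{\infty,\delta}$.

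First I would take a minimizing sequence $\{v_n\}\subset M_{\infty,\delta}$ (automatically radial) and show it is bounded in $X_r$. From $P_{\infty,\delta}(v_n)=0$ one computes $J_{\infty,\delta}(v_n)=\frac1N\int H(Dv_n)^p\,dx$, so $\int H(Dv_n)^p\,dx\to Nm_{\infty,\delta}$ is bounded; feeding this together with $P_{\infty,\delta}(v_n)=0$ into the interpolation estimate (\ref{ineq1}) — whose exponent $\gamma(q+1)/p$ is $<1$ precisely because $q+1>p$ — forces $\int V(\infty)|f(v_n)|^p\,dx$ to remain bounded, and then property (x) of Lemma \ref{P}, together with (\ref{LL6}) and an interpolation between $L^p$ and $L^{2\alpha p^*}$, bounds $\int|v_n|^p\,dx$. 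Passing to a subsequence, $v_n\rightharpoonup v$ in $X_r$, $v_n\to v$ a.e., and by the compact embedding in Lemma \ref{Comp}(ii) (valid since $q+1\in(p,2\alpha p^*)$) one has $f(v_n)\to f(v)$ in $L^{q+1}(\R^N)$.

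Next I would rule out $v=0$. If $v=0$ then $\int|f(v_n)|^{q+1}\,dx\to 0$, and since on $M_{\infty,\delta}$ the nonnegative terms $\int H(Dv_n)^p$ and $\int V(\infty)|f(v_n)|^p$ add up, up to positive constants, to $\frac{\lambda\delta N}{q+1}\int|f(v_n)|^{q+1}\to 0$, we would obtain $J_{\infty,\delta}(v_n)=\frac1N\int H(Dv_n)^p\to 0$, contradicting $m_{\infty,\delta}>0$. Hence $v\neq 0$, and $a:=\int H(Dv)^p\,dx>0$ (otherwise $v$ is constant in $D^{1,p}(\R^N)$, i.e. $v=0$). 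By convexity of $\xi\mapsto H(\xi)^p$ the functional $v\mapsto\int H(Dv)^p$ is weakly lower semicontinuous, Fatou applies to $\int|f(v)|^p$, and the $L^{q+1}$-convergence controls the last term, so $P_{\infty,\delta}(v)\le\liminf_{n\to\infty} P_{\infty,\delta}(v_n)=0$. Since $a>0$, this forces $\frac1p V(\infty)\int|f(v)|^p-\frac{\lambda\delta}{q+1}\int|f(v)|^{q+1}<0$; hence $\phi_v(t):=J_{\infty,\delta}(v_t)$ has a unique maximum point $t_0=t_0(v)$, and as $\phi_v'(1)=P_{\infty,\delta}(v)\le 0$ we get $t_0\le 1$ with $v_{t_0}\in M_{\infty,\delta}$. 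Using $J_{\infty,\delta}=\frac1N\int H(D\,\cdot)^p$ on $M_{\infty,\delta}$ and $\int H(Dv_{t_0})^p=t_0^{\,N-p}a$,
\[
m_{\infty,\delta}\le J_{\infty,\delta}(v_{t_0})=\tfrac1N t_0^{\,N-p}a\le\tfrac1N a\le\liminf_{n\to\infty}\tfrac1N\int H(Dv_n)^p\,dx=m_{\infty,\delta},
\]
so all inequalities are equalities, $t_0=1$, $v\in M_{\infty,\delta}$ and $J_{\infty,\delta}(v)=m_{\infty,\delta}$. Setting $v_{\infty,\delta}=v$ and applying part (iv) of Lemma \ref{NA} (with $V(\infty)$) yields $J'_{\infty,\delta}(v_{\infty,\delta})=0$.

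The main obstacle is the compactness of the minimizing sequence: the equation lives on all of $\R^N$ and is translation invariant, so a priori the mass of $\{v_n\}$ could escape to infinity. This is exactly why one works on the radial space $X_r$, where Lemma \ref{Comp}(ii) upgrades weak to strong convergence in $L^{q+1}$ thanks to the strict subcriticality $q+1<2\alpha p^*$; the only other delicate point is that $|f|^p$ fails to be convex when $p<2\alpha$, so $v\mapsto\int|f(v)|^p$ is not weakly lower semicontinuous, but this is harmless because that term enters $P_{\infty,\delta}$ with the sign for which Fatou's lemma already delivers the one-sided estimate $P_{\infty,\delta}(v)\le\liminf P_{\infty,\delta}(v_n)$ that drives the squeeze argument.
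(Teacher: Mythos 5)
Your proof is correct and supplies exactly the details the paper is deferring to when it says ``simple consequence of Theorem~\ref{TC}'': same blueprint of minimizing $J_{\infty,\delta}$ on the radial Pohozaev manifold $M_{\infty,\delta}$, same boundedness estimate via $P_{\infty,\delta}=0$ plus the interpolation inequality~(\ref{ineq1}), same compact embedding into $L^{q+1}(\R^N)$, and the natural-constraint conclusion via Lemma~\ref{NA}(iv). Your final attainment step (project the weak limit onto $M_{\infty,\delta}$ by scaling with $t_0\le 1$ and squeeze) is just a direct rephrasing of the very scaling comparison the paper runs as a contradiction with the auxiliary polynomials $g_1<g_2$ in the proof of Theorem~\ref{TC}, so the mechanism is the same.
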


\begin{proof}
    This lemma is a simple consequence of Theorem \ref{TC} so we omit the proof.
\end{proof}

Now we are going to prove the following lemma
\begin{lem}\label{SL}
If the potential $V$ satisfies $(v_1)$ and $(v_2)$, $N\geq 2$, $(\alpha, p)\in D_N$ and $p-1<q<2\alpha p^*-1$. Then for every $\delta\in I$, $$C_\delta<m_{\infty, \delta}$$
\end{lem}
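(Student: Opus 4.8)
The plan is to compare the mountain-pass level $C_\delta$ of the functional $J_\delta$ with the least ``energy'' $m_{\infty,\delta}$ at infinity by exhibiting a competitor path. First I would take the minimizer $v_{\infty,\delta}\in M_{\infty,\delta}$ provided by Lemma \ref{LM}, which satisfies $J'_{\infty,\delta}(v_{\infty,\delta})=0$ and $J_{\infty,\delta}(v_{\infty,\delta})=m_{\infty,\delta}$. By the same scaling computation as in Lemmas \ref{NA} and \ref{NL}, the map $t\mapsto J_{\infty,\delta}\big((v_{\infty,\delta})_t\big)$ where $(v_{\infty,\delta})_t(x)=v_{\infty,\delta}(x/t)$ is of the form $h(t)=C_1 t^{N-p}+t^N(C_2-\lambda\delta C_3)$, so by Lemma \ref{AL} (and the attached remark) it attains its maximum exactly at $t=1$, and $J_{\infty,\delta}((v_{\infty,\delta})_t)\to-\infty$ as $t\to\infty$. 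Thus, choosing $T>1$ large enough, the rescaled path $\gamma_0(s) = (v_{\infty,\delta})_{sT}$ (suitably reparametrised so that $\gamma_0(0)=0$, as in the proof of Lemma \ref{NL}) lies in $\Gamma_\delta$, and along it
\[
\max_{s\in[0,1]} J_\delta(\gamma_0(s)) \le \max_{t>0} J_\delta\big((v_{\infty,\delta})_t\big).
\]

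The second step is to estimate this last maximum. Since $V(x)\le V(\infty)$ pointwise by $(v_1)$, we have $J_\delta(v)\ge$... wait — the inequality goes the wrong way for $J$, since a smaller $V$ makes $-\frac{1}{p}\int V|f(v)|^p$ \emph{larger}; but $J_\delta(v)\le J_{\infty,\delta}(v)$ fails. Instead I would use that $V$ is \emph{not} identically $V(\infty)$: on a set of positive measure $V(x)<V(\infty)$, hence for the fixed function $v_{\infty,\delta}$ (which is radial, continuous and positive on a large ball — here we use $v_{\infty,\delta}\in C^1$ from Lemma \ref{PI} and positivity) we get the \emph{strict} inequality
\[
\int V(x)\,|f(v_{\infty,\delta})|^p\,dx < \int V(\infty)\,|f(v_{\infty,\delta})|^p\,dx .
\]
This strict gap, propagated through the scaling, gives $J_\delta\big((v_{\infty,\delta})_t\big) < J_{\infty,\delta}\big((v_{\infty,\delta})_t\big)$ for every $t>0$, and in particular
\[
\max_{t>0} J_\delta\big((v_{\infty,\delta})_t\big) \le \sup_{t>0}\Big[ J_{\infty,\delta}\big((v_{\infty,\delta})_t\big) - \big(\tfrac{t^N}{p}\big)\varepsilon_0 \Big] < \max_{t>0} J_{\infty,\delta}\big((v_{\infty,\delta})_t\big) = m_{\infty,\delta},
\]
where $\varepsilon_0 = \frac1p\int (V(\infty)-V(x))|f(v_{\infty,\delta})|^p\,dx>0$; the strict inequality survives the supremum because the maximising $t$ for the right-hand side is a fixed positive number ($t=1$) where the correction term is strictly negative and bounded away from zero on a neighbourhood. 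Combining with the first step yields $C_\delta \le \max_{s}J_\delta(\gamma_0(s)) < m_{\infty,\delta}$.

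The main obstacle I anticipate is making the ``strict gap survives the sup'' step fully rigorous: one must ensure that the unique maximiser $t_\delta$ of $t\mapsto J_\delta((v_{\infty,\delta})_t)$ (which exists and is unique by Lemma \ref{AL} applied with the constant $C_2$ replaced by $\frac1p\int V|f(v_{\infty,\delta})|^p$) stays in a compact subinterval of $(0,\infty)$ uniformly, and then compare $J_\delta$ at $t_\delta$ with $J_{\infty,\delta}$ at its own maximiser $t=1$; the cleanest route is to write $J_\delta((v_{\infty,\delta})_t) = J_{\infty,\delta}((v_{\infty,\delta})_t) - \frac{t^N}{p}\big(\text{positive constant}\big)$ and note $\sup_{t>0}[h(t)-c t^N] < \sup_{t>0} h(t)$ for any $c>0$ whenever $h$ has an interior maximiser, which is elementary. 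A secondary technical point is justifying positivity and $C^1$-regularity of $v_{\infty,\delta}$ so that the integral gap is genuinely positive; this follows from Lemma \ref{Bdd}, Lemma \ref{PI}, and the fact that $m_{\infty,\delta}$ is attained at a nonnegative, nontrivial function (which may be taken radially decreasing). All other ingredients — the scaling identity, membership in $\Gamma_\delta$, and Lemma \ref{AL} — are already in place in the excerpt.
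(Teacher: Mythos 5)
Your proposal is essentially the paper's argument, packaged directly rather than by contradiction: you take the path built from scalings of $v_{\infty,\delta}$ and compare $J_\delta$ against $J_{\infty,\delta}$ along it; the paper instead assumes $C_\delta=m_{\infty,\delta}$, locates a maximiser $t^*$ of $J_\delta\circ\gamma$ with $J_\delta(\gamma(t^*))=m_{\infty,\delta}$, and uses uniqueness of the maximiser of $J_{\infty,\delta}\circ\gamma$ to derive $J_\delta(\gamma(t^*))>J_{\infty,\delta}(\gamma(t^*))$, contradicting $(v_1)$. Your direct version, via the decomposition $J_\delta((v_{\infty,\delta})_t)=J_{\infty,\delta}((v_{\infty,\delta})_t)-\frac{t^N}{p}\varepsilon_0$ and the elementary observation that $\sup_t[h(t)-ct^N]<\sup_t h(t)$ when $c>0$ and $h$ has an interior maximum, is arguably cleaner.

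One correction you must make: in the middle of the proposal you assert that the inequality $J_\delta(v)\leq J_{\infty,\delta}(v)$ \emph{fails}. It does not. The potential term in $J_\delta$ appears with a plus sign, $J_\delta(v)=\frac{1}{p}\int[H(Dv)^p+V(x)|f(v)|^p]\,dx-\frac{\lambda\delta}{q+1}\int|f(v)|^{q+1}\,dx$, so $V\leq V(\infty)$ gives $J_\delta(v)\leq J_{\infty,\delta}(v)$ for every $v\in X$; indeed the paper's proof is built on exactly this. Your subsequent argument does use the (strict form of the) correct inequality, so nothing downstream breaks, but the parenthetical claim as written is wrong and should be removed. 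You are also right to flag the need to ensure $\int(V(\infty)-V)|f(v_{\infty,\delta})|^p\,dx>0$; this requires the support of $v_{\infty,\delta}$ to overlap $\{V<V(\infty)\}$, a point the paper leaves implicit and which, if pressed, is handled using the translation invariance of $J_{\infty,\delta}$ (the admissible paths live in $X$, not $X_r$, so one is free to pre-translate $v_{\infty,\delta}$).
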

\begin{proof}
     It is easy to see that $$J_{\infty, \delta}(v_{\infty, \delta})=\max_{t>0}J_{\infty, \delta}(v_{\infty, \delta}(\frac{.}{t}))$$
     Let $\gamma$ be the curve defined in the lemma \ref{NL} for $v=v_{\infty,\delta}$. By using $(v_1)$, we have
            $$C_\delta\leq \max_{t\in[0,1]} J_\delta(\gamma(t))\leq \max_{t\in[0,1]} J_{\infty,\delta}(\gamma(t))\leq  J_{\infty,\delta}(v_{\infty,\delta})=m_{\infty,\delta}.$$

If possible let $C_\delta=m_{\infty,\delta}$. Then $\max_{t\in[0,1]} J_\delta(\gamma(t))= J_{\infty,\delta}(v_{\infty,\delta})$. As $m_{\infty,\delta}>0$ and $J_\delta o \gamma$ is a continuous map, so there exists $t^*\in (0,1)$ such that 
 $J_\delta(\gamma(t^*))=J_{\infty,\delta}(v_{\infty,\delta})=m_{\infty,\delta}$. Moreover, since $t=1$ is the unique maxima of $J_{\infty,\delta} o \gamma$ so $J_\delta(\gamma(t^*))>J_{\infty,\delta}(\gamma(t^*))$, which is not possible because of $(v_1)$. Hence, $C_\delta<m_{\infty,\delta}.$ 
 \end{proof}
 Now, we present the most important lemma to prove our main result.
\begin{lem}(Global compactness lemma)\label{GCL}
Suppose that $(v_1)$ and $(v_2)$ hold, $N\geq 3$, $(\alpha,p)\in D^N$, and $2\alpha p-1\leq q<2\alpha p^*-1$. For every $\delta\in I$, let $\{u_n\}$ be a bounded $(PS)_{c_\delta}$ sequence for $J_\delta$. Then there exist a subsequence of $\{u_n\}$, still denote by $\{u_n\}$ and $u_0\in X$, an integer $k\in \N\cup\{0\}$, $w_i\in X$, sequence $\{x^i_n\}\subset\R^N$ for $1\leq i\leq k$ such that
\begin{enumerate}[label=(\roman*)]
    \item $u_n\rightharpoonup u_0$ in $X$ with $J_\delta(u_0)\geq 0$ and $J'_\delta(u_0)=0$
    \item $|x^i_n|\to\infty$, $|x^i_n-x^j_n|\to\infty$ as $n\to \infty$ if $i\neq j$.
    \item $w_i\nequiv 0$ and $J_{\infty,\delta}'(w_i)=0$, for $1\leq i\leq k$.
    \item $||u_n-u_0-\sum_{i=1}^{k}w_i(.-x^i_n)||\to 0$ as $n\to \infty$.
    \item $J_\delta(u_n)\to J_\delta(u_0)+\sum_{i=1}^{k} J_{\infty,\delta}(w_i)$.
\end{enumerate}
\end{lem}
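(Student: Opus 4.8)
This is the standard concentration-compactness / profile-decomposition argument of Benci--Cerami type, adapted to the Orlicz-space setting via the transformation $u=f(v)$; the plan is to run an iterative "bubbling off at infinity" scheme. First I would establish the $k=0$ base case: since $\{u_n\}$ is bounded in $X$ (hence in $W^{1,p}(\R^N)$ by Remark \ref{eqv} and the earlier norm equivalences), up to a subsequence $u_n\rightharpoonup u_0$ in $X$, $u_n\to u_0$ a.e., and $u_n\to u_0$ in $L^s_{loc}$. To get $J_\delta'(u_0)=0$ one tests $J_\delta'(u_n)\to 0$ against $\phi\in C_c^\infty(\R^N)$: the delicate point is passing to the limit in the quasilinear principal term $\int H(Du_n)^{p-1}\nabla H(Du_n)\cdot\nabla(f'(u_n)\phi)$. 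Here I would verify the hypothesis of Lemma \ref{PBS} — namely that $\int\phi\,(H(Du_n)^{p-1}\nabla H(Du_n)-H(Du)^{p-1}\nabla H(Du))\cdot\nabla T(u_n-u_0)\to 0$ — using the test function $\psi_n=f'(u_n)T(u_n-u_0)\phi$ or a truncation thereof, exploiting $0<f'\le 1$ and Lemma \ref{P}; Lemma \ref{PBS}(a) then gives $Du_n\to Du_0$ a.e., and Lemma \ref{PBS}(c) together with Lemma \ref{BL} (applicable because $(\alpha,p)\in D^N$ forces $p\ge 2\alpha+1$, or $p=2$, $2\alpha\le p$) lets one split all the nonlinear terms, yielding $J_\delta'(u_0)=0$. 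That $J_\delta(u_0)\ge 0$ follows from $J_\delta(u_0)=J_\delta(u_0)-\frac1{?}\langle J_\delta'(u_0),\cdot\rangle$ combined with the Pohozaev identity of Lemma \ref{PI} (the same computation as $NJ(v)-P(v)=\int H(Dv)^p\ge 0$ in Lemma \ref{NA}(ii)).

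Next, set $u_n^1:=u_n-u_0$. By the Brezis--Lieb type splitting (Lemma \ref{PBS}(b) for the gradient term, Lemma \ref{LL}(ii) and Lemma \ref{BL} for the potential and nonlinear terms) one gets $J_\delta(u_n^1)=J_\delta(u_n)-J_\delta(u_0)+o(1)\to c_\delta-J_\delta(u_0)$ and $J_{\infty,\delta}'(u_n^1)\to 0$ in $X^*$ (note the potential changes from $V(x)$ to $V(\infty)$ in the limit because $u_n^1\rightharpoonup 0$ and $V(x)\to V(\infty)$ at infinity, using $(v_1)$–$(v_2)$; this uses that $\int(V(x)-V(\infty))|f(u_n^1)|^{p-2}f(u_n^1)f'(u_n^1)\phi\to 0$ by local compactness, Lemma \ref{Comp}). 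Now dichotomy: if $u_n^1\to 0$ in $X$ we stop with $k=0$. Otherwise, by Corollary \ref{BC} the quantity $\int[H(Du_n^1)^p+V|f(u_n^1)|^p]$ does not go to zero, and I claim $\delta_0:=\limsup_n\sup_{y\in\R^N}\int_{B_1(y)}|f(u_n^1)|^p\,dx>0$: if it were $0$, Lemma \ref{CP1} would give $f(u_n^1)\to 0$ in $L^{q+1}(\R^N)$ (since $p<q+1<2\alpha p^*$), and then testing $J_{\infty,\delta}'(u_n^1)\to 0$ against $u_n^1$ (more precisely against a suitable admissible variation, controlling the $f'$ factors via Lemma \ref{P}(ix),(vi)) would force $\int[H(Du_n^1)^p+V(\infty)|f(u_n^1)|^p]\to 0$, i.e. $u_n^1\to 0$, a contradiction. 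So there are $x_n^1$ with $\int_{B_1(x_n^1)}|f(u_n^1)|^p\ge\delta_0/2$; since $u_n^1\rightharpoonup 0$ locally, $|x_n^1|\to\infty$. Define $w_1$ as the weak limit of $u_n^1(\cdot+x_n^1)$; translation invariance of $J_{\infty,\delta}$ gives $J_{\infty,\delta}'(w_1)=0$, and $w_1\not\equiv 0$ because of the non-vanishing mass on $B_1$ and local compactness.

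Then iterate: put $u_n^2:=u_n^1-w_1(\cdot-x_n^1)$, repeat the Brezis--Lieb bookkeeping to get $J_\delta(u_n)\to J_\delta(u_0)+J_{\infty,\delta}(w_1)+J_\delta(u_n^2)+o(1)$ with $J_{\infty,\delta}'(u_n^2)\to 0$, and the divergence $|x_n^1-x_n^2|\to\infty$ follows from $u_n^1(\cdot+x_n^1)\rightharpoonup w_1$ while $u_n^2(\cdot+x_n^1)\rightharpoonup 0$. The process terminates after finitely many steps: each nontrivial $w_i$ satisfies $J_{\infty,\delta}'(w_i)=0$, $w_i\ne 0$, hence $J_{\infty,\delta}(w_i)\ge m_{\infty,\delta}>0$ (positivity of the ground-state level, Lemma \ref{LM} and the argument of Lemma \ref{NA}(ii) applied with potential $V(\infty)$), while the running energies $J_\delta(u_n^{k+1})\to c_\delta-J_\delta(u_0)-\sum_{i=1}^k J_{\infty,\delta}(w_i)$ stay bounded below by $0$ by the same Pohozaev-type lower bound; thus $k\le (c_\delta - J_\delta(u_0))/m_{\infty,\delta}$. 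When the algorithm stops we have $u_n^{k+1}\to 0$ in $X$, which is exactly conclusions (iv) and (v), and (i)–(iii) have been recorded along the way.

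\textbf{Main obstacle.} The genuinely hard step is the a.e. gradient convergence and the limit-passage in the quasilinear term — i.e. checking the hypothesis of Lemma \ref{PBS} for the translated sequences and for $u_n^1$ under the shifted potential $V(\infty)$ — because the test function must simultaneously handle the $f'(u_n)$ nonlinearity, the truncation $T$, and the fact that $\{u_n^{i}\}$ is only bounded in $D^{1,p}$ and converges weakly to $0$; one must be careful that the "error" terms coming from $\nabla f'(u_n)=f''(u_n)\nabla u_n$ are controlled, which is where the hypotheses $p=2,\,2\alpha\le p$ or $(\alpha,p)\in D^N$ (giving $p\ge 2\alpha+1$, hence convexity of $|f|^p$ and the bounds in Lemma \ref{P}) are essential. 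A secondary subtlety is making the "test against $u_n^i$" rigorous: $u_n^i$ itself need not be an admissible test function after the transformation, so one tests against $f(u_n^i)/f'(u_n^i)$-type quantities or truncations and absorbs the remainders using Lemma \ref{P}(vi),(ix).
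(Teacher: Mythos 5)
Your plan follows essentially the same Benci--Cerami profile-decomposition scheme as the paper: pass to weak limits, verify the hypothesis of Lemma~\ref{PBS} with a truncation test function, use Lemma~\ref{PBS} for a.e.\ gradient convergence, invoke Lemma~\ref{CP1} in the vanishing alternative, translate to extract bubbles and check $J'_{\infty,\delta}(w_i)=0$, and terminate after finitely many steps because nontrivial critical points of $J_{\infty,\delta}$ are bounded away from zero. One small simplification worth noting: since $J_\delta$ is already the transformed functional in the variable $v$, its principal term is the plain anisotropic $p$-Laplacian with no $f'$ attached, so the paper simply tests with $\phi\,T(u_n-u_0)$ (your proposed $\psi_n=f'(u_n)T(u_n-u_0)\phi$ would only introduce the extra $\nabla f'(u_n)$ error terms you worry about, which in fact never arise here), and the termination step is argued via the norm lower bound $\|w_i\|\ge C$ from Lemma~\ref{NA}(ii) fed into the $W^{1,p}$-norm splitting rather than via energy levels, though your energy-level count also works.
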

\begin{proof}
The proof consists of several steps:
\begin{enumerate}[label= Step \arabic*.]
\item Since $\{u_n\}$ is bounded so without loss of generality we can assume that 
\begin{enumerate}
    \item $u_n\rightharpoonup u_0\; \mbox{ in }\; X.$
    \item $f(u_n)\rightarrow f(u_0) \;\mbox{ in }\; L^s_{loc}(\R^N)\; \mbox{ for all}\; p\leq s\leq 2\alpha p^*.$
    \item $u_n\rightharpoonup u_0 \;\mbox{ a.e. in }\; \R^N.$
\end{enumerate}
We show that for every $\phi\in C_c^\infty(\R^N)$,$$\langle J'_\delta(u_0), \phi\rangle=\lim_{n\to\infty} \langle J'_\delta(u_n), \phi\rangle.$$
That is, we only have to show the following identities:
\begin{enumerate}[label=(\alph*)]
    \item $\lim_{n\to\infty}\int H(Du_n)^{p-1}\nabla H(Du_n).\nabla \phi\; dx=\int H(Du_0)^{p-1}\nabla H(Du_0).\nabla \phi\; dx$
    \item $\lim_{n\to\infty}\int V(x)|f(u_n)|^{p-2}f(u_n)f'(u_n)\phi\; dx=\int V(x)|f(u_0)|^{p-2}f(u_0)f'(u_0)\phi \;dx$
    \item $\lim_{n\to\infty}\int |f(u_n)|^{q-1}f(u_n)f'(u_n)\phi\; dx=\int V(x)|f(u_0)|^{q-1}f(u_0)f'(u_0)\phi\; dx$
\end{enumerate}
   Let $K=\mbox{supp}(\phi)$. For every $s\in[p, 2\alpha p^*)$, there exists $h_s\in L^s(K)$ such that up to a subsequence $|f(u_n)|,\; |f(u_0)|\leq h$ and $u_n\rightarrow u_0$ a.e. in $\R^N$. The equalities (b) and (c) are two consequences of the Dominated Convergence Theorem. As for the first part by Egorov's theorem for every $\epsilon>0$, there exists a measurable set $E\subset K$ such that $|E|<\epsilon$ and $u_n$ converges to $u$ uniformly on $E^c\cap K$. So for large n, $|u_n(x)-u(x)|\leq 1$. Using the fact that $u_n\rightharpoonup u_0$, we have 
\begin{align*}
|\int \phi H(Du_0)^{p-1}\nabla H(Du_0).\nabla T(u_n-u_0)\; dx|&\leq |\int_{E} \phi H(Du_0)^{p-1}\nabla H(Du_0).\nabla T(u_n-u_0)\; dx|\\ 
&+ |\int_{E^c\cap K} \phi H(Du_0)^{p-1}\nabla H(Du_0).\nabla T(u_n-u_0)\; dx|\\
& \leq \int_{E} |\phi H(Du_0)^{p-1}\nabla H(Du_0).\nabla T(u_n-u_0)|\; dx \\
& + |\int_{E^c\cap K} \phi H(Du_0)^{p-1}\nabla H(Du_0).\nabla(u_n-u_0) \;dx|\\
& \leq M\epsilon^\frac{1}{p}+o(1)
\end{align*}
Hence,
\begin{align}\label{I1}
    \int \phi H(Du_0)^{p-1}\nabla H(Du_0).\nabla T(u_n-u_0)\; dx\rightarrow 0.
\end{align}
Since $\{u_n\}$ is a bounded Palais-Smale sequence so 
    $$\langle J'_\delta(u_n), \phi.T(u_n-u_0)\rangle=o(1)$$
which implies 
\begin{align}\label{I2}
\int H(Du_n)^{p-1}\nabla H(Du_n).\nabla (\phi.T(u_n-u_0))\; dx&= -\int V|f(u_n)|^{p-2}f(u_n)f'(u_n)\phi T(u_n-u_0)\; dx\nonumber\\
&+ \lambda\delta\int |f(u_n)|^{q-1}f(u_n)f'(u_n)\phi T(u_n-u_0)\; dx + o(1)
\end{align}
Using (\ref{I2}), we have
\begin{align}\label{I3}
    |\int \phi H(Du_n)^{p-1}\nabla H(Du_n).\nabla T(u_n-u_0) \;dx|&\leq |\int H(Du_n)^{p-1}\nabla H(Du_n).\nabla (\phi.T(u_n-u_0)) \;dx|\nonumber\\
    &+ \int H(Du_n)^{p-1}(\nabla H(Du_n).\nabla\phi) T(u_n-u_0)\;dx|\nonumber\\
    &\leq \int V|f(u_n)|^{p-1}|\phi T(u_n-u_0)|\; dx\nonumber\\
    &+ \lambda\delta\int |f(u_n)|^q|f'(u_n)||\phi T(u_n-u_0)| \;dx\nonumber \\
    &+ |\int H(Du_n)^{p-1}(\nabla H(Du_n).\nabla\phi) T(u_n-u_0)\;dx|+ o(1)\nonumber\\
    &=o(1)
\end{align}
From (\ref{I1})-(\ref{I3}), one has
$$ \int \phi (H(Du_n)^{p-1}\nabla H(Du_n)-H(Du_0)^{p-1}\nabla H(Du_0)).\nabla T(u_n-u_0) dx\rightarrow 0.$$
By lemma \ref{PBS}, we can conclude $Du_n\rightarrow Du_0$ almost everywhere in $\R^N$. Moreover, 
\begin{enumerate}[label=\roman*)]
    \item $H(Du_n)^{p-1}\nabla H(Du_n)$ is bounded in $L^\frac{p}{p-1}(\R^N)$
    \item $H(Du_n)^{p-1}\nabla H(Du_n)\rightarrow H(Du_0)^{p-1}\nabla H(Du_0)$ a.e. in $\R^N$.
\end{enumerate}
Hence, $H(Du_n)^{p-1}\nabla H(Du_n)\rightharpoonup H(Du_0)^{p-1}\nabla H(Du_0)$ in $L^\frac{p}{p-1}(\R^N)$. Consequently, (a) follows.
\item
 Since $J_\delta'(u_0)=0$ so $u_0$ satisfies the following Poho$\check{z}$aev identity 
 \begin{align}\label{I4}
     \frac{N-p}{p}\int H(Du_0)^p + \frac{1}{p}\int [V(x)+\langle\nabla V(x),x\rangle]|f(u_0)|^p dx - \frac{\lambda\delta N}{q+1}\int |f(u_0)|^{q+1} dx=0.
 \end{align}
From (\ref{I4}) and $\langle J_\delta'(u_0),\frac{f(u_0)}{f'(u_0)}\rangle=0$ we deduce
\begin{align}\label{I5}
    \frac{N-p}{p}A + \frac{N}{p}\beta_2+\frac{1}{p}B-\frac{\lambda\delta N}{q+1}\beta_3=0.
\end{align}
and
\begin{align}\label{I6}
J_\delta(u_0)= (2\alpha-1)(\beta_1+\beta_2)+\frac{\lambda\delta\beta_3(q+1-2\alpha p)}{2\alpha p(q+1)}.
\end{align}
where $\beta_1=\int \frac{H(Du_0)^p}{1+(2\alpha)^{p-1}|f(u_0)|^{p(2\alpha-1)}} dx$, $\beta_2=\int V(x)|f(u_0)|^p dx$, $\beta_3=\int |f(u_0)|^{q+1} dx$, $A=\int H(Du_0)^p dx$, and $B=\int \langle\nabla V(x),x\rangle|f(u_0)|^p dx$.
Using (\ref{I5}), (\ref{I6}) and $(v_2)$, we get
\begin{align}
    NJ_\delta(u_0)=(2\alpha-1)(\beta_1+\beta_2)+\frac{(N-p)(q+1-2\alpha p)}{2\alpha p^2}A+\frac{q+1-2\alpha p}{2\alpha p^2}(N\beta_2+B)\geq 0.
\end{align}
\item
Since $\{u_n\}$ is a Palais-Smale sequence and $u_0$ is a critical point of $J_\delta$, we have  
\begin{align}\label{f5}
    \langle J'_\delta(u_n),\frac{f(u_n)}{f'(u_n)}\rangle&=\int H(Du_n)^p(1+G(u_n)) dx + \int V|f(u_n)|^p dx-\lambda\delta\int |f(u_n)|^{q+1} dx=o(1)
\end{align}
and 
\begin{align}\label{f6}
    \langle J'_\delta(u_0),\frac{f(u_0)}{f'(u_0)}\rangle&=\int H(Du_0)^p(1+G(u_0)) dx + \int V|f(u_0)|^p dx-\lambda\delta\int |f(u_0)|^{q+1} dx=0
\end{align}
where $G(t)=(2\alpha-1)(2\alpha)^{p-1}|f(t)|^{p(2\alpha-1)}[1+(2\alpha)^{p-1}|f(t)|^{p(2\alpha-1)}]^{-1}$.\\
Let $$\rho=\limsup_{n\to\infty} \sup_{ y\in\R^N}\int_{ B_1(y)} |f(u^1_n)|^p dx$$ where $u^1_n=u_n-u_0\rightharpoonup 0$ in $X$.
\begin{enumerate}
    \item \textbf{Vanishing Case:} If $\rho=0$ then by lemma \ref{CP1}, we have 
\begin{align*}
    f(u^1_n)\to 0\text{ in } L^{q+1}(\R^N).
\end{align*}
which implies 
\begin{align}\label{f7}
    f(u_n)\to f(u_0) \text{ in } L^{q+1}(\R^N)
\end{align}
Using (\ref{f5}), (\ref{f6}) and (\ref{f7}), we deduce
\begin{align}
    \lim_{n\to\infty}\int [H(Du_n)^p(1+G(u_n))+ V|f(u_n)|^p]\; dx=\int [H(Du_0)^p(1+G(u_0)) + \int V|f(u_0)|^p]\; dx
\end{align}
 Fatou's lemma ensures that up to a subsequence 
\begin{equation}\label{ff1}
    \begin{split}
        \lim_{n\to\infty}\int H(Du_n)^p\; dx&=\int H(Du_0)^p\;dx\\
     \lim_{n\to\infty} \int V|f(u_n)|^p\; dx&=\int V|f(u_0)|^p\; dx
    \end{split}
\end{equation}
The Brezis-Lieb lemma and (\ref{ff1}) imply $u_n\to u_0$ in $X$.
\item \textbf{Non-Vanishing Case:} If vanishing does not occur then there exists a sequence $\{x^1_n\}\subset \R^N$ such that 
\begin{align}\label{f8}
    \int_{ B_1(0)} |f(\tilde{u}^1_n)|^p\; dx\geq \frac{\rho}{2}
\end{align}
where $\tilde{u}^1_n(x):=u^1_n(x+x^1_n).$
Since the sequence $\{\tilde{u}^1_n\}$ is also bounded in $X$, there exists $w_1\in X$ such that
\begin{align}
    \begin{cases}
    \tilde{u}^1_n\rightharpoonup w_1 \text{ in } X\\
    f(\tilde{u}^1_n)\to f(w_1) \text{ in } L^p(B_1(0)).
\end{cases}
\end{align}
The inequality (\ref{f8}) ensures $w_1\nequiv 0$. Moreover, $\{x^1_n\}$
is unbounded. Our next goal is to show $J'_{\infty,\delta}(w_1)=0$. For $\phi\in C^\infty_c(\R^N)$, one has,
\begin{equation}
    \begin{split}
        \langle J'_{\infty,\delta}(w_1),\phi\rangle&=\lim_{n\to\infty}\langle J'_{\infty,\delta}(\tilde{u}^1_n),\phi\rangle\nonumber\\
    &=\lim_{n\to\infty}\langle J'_\delta(u^1_n),\phi(\cdot-x^1_n)\rangle-\lim_{n\to\infty}\int (V(x+x^1_n)-V(\infty))|f(\tilde{u}^1_n)|^{p-2}f(\tilde{u}^1_n)f'(\tilde{u}^1_n)\phi\; dx
    \end{split}
\end{equation}
By Remark \ref{BL}, we deduce 
$$\langle J'_\delta(u^1_n),\phi \rangle \to 0 \mbox{ uniformly with respect to }\phi.$$
Since $u^1_n\rightharpoonup 0$, one has that $\lim_{n\to\infty}\langle J'_\delta(u^1_n),\phi(\cdot-x^1_n)\rangle=0$ and the condition $(v_1)$ implies $$\lim_{n\to\infty}\int (V(x+x^1_n)-V(\infty))|f(\tilde{u}^1_n)|^{p-2}f(\tilde{u}^1_n)f'(\tilde{u}^1_n)\phi dx=0.$$ Thus, $J'_{\infty,\delta}(w_1)=0$.
Again by Brezis-Lieb lemma, one has,
\begin{equation}\label{f9}
    \begin{split}
        \lim_{n\to\infty}\int [H(Du^1_n)^p-H(Du_n)^p+H(Du_0)^p] dx=0.\\
    \lim_{n\to\infty}\int [|f(u^1_n)|^{q+1}-|f(u_n)|^{q+1}+|f(u_0)|^{q+1}] dx=0.\\
    \lim_{n\to\infty}\int V[|f(u^1_n)|^p-|f(u_n)|^p+|f(u_0)|^p] dx=0.
    \end{split}
\end{equation}
Under the assumption $(v_1)$, we have 
\begin{align}\label{f10}
  \lim_{n\to\infty} \int (V(x)-V(\infty)) |f(u^1_n)|^p dx=0
\end{align}
Using (\ref{f9}) and (\ref{f10}), one can easily conclude,
\begin{equation}\label{f11}
    \begin{split}
        J_\delta(u^1_n)-J_\delta(u_n)+J_\delta(u_0) \to 0 \text{ as }n\to\infty.\\
    J_\delta(u_n)-J_{\infty,\delta}(u^1_n)-J_\delta(u_0)\to 0 \text{ as } n\to\infty.
    \end{split}
\end{equation}
\end{enumerate}
\item Now, we define $$\rho_1=\limsup_{n\to\infty} \sup_{ y\in\R^N}\int_{ B_1(y)} |f(u^2_n)|^p\; dx, \mbox{where}\; u^2_n=u^1_n-w_1(.-x^1_n)\rightharpoonup 0\;\mbox{in}\;X$$\\
If $\rho_1=0$ then by a similar argument as \textbf{Step 3}, we obtain $$||u_n-u_0-w_1(.-x^1_n)||\to 0 \text{ in } X.$$ If $\rho_1\neq 0$ then there exists a sequence $\{x^2_n\}$ such that $\tilde{u}^2_n\rightharpoonup w_2\nequiv 0$. Moreover, $|x^1_n-x^2_n|\to \infty$ as $n\to\infty.$ Arguing as above, we obtain the following:
\begin{equation}\label{f12}
    \begin{split}
        ||H(Du^2_n)||^p_p-||H(Du_n)||^p_p+||H(Du_0)||^p_p+||H(Dw_1(.-x^1_n))||^p_p=o(1)\\
        \int [V|f(u^2_n)|^p-V|f(u_n)|^p+V|f(u_0)|^p
         +V|f(w_1(.-x^1_n))|^p ]\;dx =o(1)\\
        ||f(u^2_n)||_{q+1}^{q+1}-||f(u_n)||_{q+1}^{q+1}+||f(u_0)||_{q+1}^{q+1}+||f(w_1(.-x^1_n))||_{q+1}^{q+1}=o(1)
    \end{split}
\end{equation}
which helps us to obtain
\begin{equation}\label{f13}
    \begin{split}
     J_\delta(u^2_n)=J_\delta(u_n)-J_\delta(u_0)-J_{\infty,\delta}+o(1)\\
        J_{\infty,\delta}(u^2_n)=J_\delta(u^1_n)-J_{\infty,\delta}(w_1)+o(1).   
    \end{split}
\end{equation}
Moreover, using the Brezis-Lieb lemma, we deduce  
 \begin{align}\label{BL1}
      \langle J'_\delta(u^2_n),\phi\rangle=\langle J'_\delta(u_n),\phi\rangle-\langle J'_\delta(u_0),\phi\rangle-\langle J'_{\infty,\delta}(w_1),\phi(\cdot+x^1_n)\rangle+o(1)=o(1)
 \end{align}
and
\begin{align*}
||\tilde{u}^1_n-w_1||^p_{1,p,\R^N}&=||\tilde{u}^1_n||^p_{1,p,\R^N}-||w_1||^p_{1,p,\R^N}+o(1)\\
&=||u_n||^p_{1,p,\R^N}-||u_0||^p_{1,p,\R^N}-||w_1||^p_{1,p,\R^N}+o(1)
\end{align*}
that is, $$||u_n-u_0-w_1(\cdot-x^1_n)||^p_{1,p,\R^N}=||u_n||^p_{1,p,\R^N}-||u_0||^p_{1,p,\R^N}-||w_1(\cdot-x^1_n)||^p_{1,p,\R^N}+o(1).$$
 Since $u^2_n\rightharpoonup 0$, one has $\langle J'_\delta(u^2_n),\phi(\cdot-x^2_n)\rangle\to 0$. Consequently, $J'_{\infty,\delta}(w_2)=0$. 
Using (\ref{f11}) and (\ref{f13}), we have 
\begin{align}
    J_\delta(u_n)=J_\delta(u_0)+J_{\infty,\delta}(u^1_n)+o(1)=J_\delta(u_0)+J_{\infty,\delta}(w_1)+J_{\infty,\delta}(u^2_n)+o(1).
\end{align}
Iterating this process k-times, we obtain $(k-1)$ number of sequences $\{x^j_n\}\subset \R^N$ for $j=1,2,...(k-1)$ and $(k-1)$ number of critical points $w_1,w_2,...w_{k-1}$ of $J'_{\infty,\delta}$ such that
\begin{equation}\label{BB1}
    \begin{split}
        ||u_n-u_0-\sum_{i=1}^{k-1}w_i(.-x^i_n)||^p_{1,p,\R^N}&=||u_n||^p_{1,p,\R^N}-||u_0||^p_{1,p,\R^N}-\sum_{i=1}^{k-1}||w_i(.-x^i_n)||^p_{1,p,\R^N}+o(1)\\
        J_\delta(u_n)&\to J_\delta(u_0)+\sum_{i=1}^{k-1}J_{\infty,\delta}(w_i)+J_{\infty,\delta}(u^k_n)
    \end{split}
\end{equation}
where  $u^k_n;=u_n-u_0-\sum_{i=1}^{k-1}w_i(\cdot-x^i_n)\rightharpoonup 0$ in $X$.\\
\item Since $J'_{\infty,\delta}(w_i)=0$, by property (ii) in Lemma \ref{NA} there exists a constant $C>0$ such that $||w_i||\geq C.$ Using this fact together with (\ref{BB1}), we can conclude that the iteration stops after some finite index $k\in \N$.
\end{enumerate}
\end{proof}
\begin{lem}\label{ML}
    Suppose that $(v_1)$ and $(v_2)$ hold, $N\geq 3$,  $(\alpha,p)\in D^N$ and $2\alpha p-1\leq q<2\alpha p^*-1$. For $\delta\in I$, let $\{v_n\}$ be a bounded $(PS)_{C_\delta}$ sequence of $J_\delta$. Then there exists $v_\delta\in X$ such that $J'_\delta(v_\delta)=0$ and $J_\delta(v_\delta)=C_\delta$, where $C_\delta$ is defined by (\ref{Cdef}).
\end{lem}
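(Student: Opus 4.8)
The plan is to feed the bounded $(PS)_{C_\delta}$ sequence into the global compactness lemma (Lemma~\ref{GCL}) and then rule out the appearance of any concentration ``bubbles'' by comparing energy levels with $m_{\infty,\delta}$.

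First I would apply Lemma~\ref{GCL} to $\{v_n\}$: since $(v_1)$, $(v_2)$ hold, $N\geq 3$, $(\alpha,p)\in D^N$ and $2\alpha p-1\leq q<2\alpha p^*-1$, every hypothesis is in place. This produces, up to a subsequence, a weak limit $v_0\in X$ with $J'_\delta(v_0)=0$ and $J_\delta(v_0)\geq 0$, an integer $k\in\N\cup\{0\}$, nontrivial critical points $w_1,\dots,w_k$ of $J_{\infty,\delta}$, and translations $\{x^i_n\}$ with $|x^i_n|\to\infty$, such that $\|v_n-v_0-\sum_{i=1}^k w_i(\cdot-x^i_n)\|\to 0$ and $J_\delta(v_n)\to J_\delta(v_0)+\sum_{i=1}^k J_{\infty,\delta}(w_i)$. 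Because $J_\delta(v_n)\to C_\delta$, this gives the energy identity $C_\delta=J_\delta(v_0)+\sum_{i=1}^k J_{\infty,\delta}(w_i)$.

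Next I would show $k=0$. Each $w_i$ is a nonzero critical point of $J_{\infty,\delta}$, hence satisfies the corresponding Poho\v{z}aev identity, so $w_i\in M_{\infty,\delta}$ and $J_{\infty,\delta}(w_i)\geq m_{\infty,\delta}$; moreover the argument of Lemma~\ref{NA}(ii), applied with the constant potential $V(\infty)$, gives $m_{\infty,\delta}\geq\inf_{M_{\infty,\delta}}J_{\infty,\delta}>0$ (and Lemma~\ref{LM} shows this level is attained). Together with $J_\delta(v_0)\geq 0$ this yields $C_\delta\geq k\,m_{\infty,\delta}$. On the other hand, since $(\alpha,p)\in D^N\subset D_N$ and $p-1<2\alpha p-1\leq q<2\alpha p^*-1$ (the strict inequality $p-1<2\alpha p-1$ being a consequence of $2\alpha p\geq p+1$, i.e.\ $2\alpha>1$, which holds on $D_N$), Lemma~\ref{SL} applies and gives $C_\delta<m_{\infty,\delta}$. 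Hence $k\,m_{\infty,\delta}<m_{\infty,\delta}$ with $m_{\infty,\delta}>0$, which forces $k=0$.

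Finally, with $k=0$ the conclusion of Lemma~\ref{GCL}(iv) becomes $\|v_n-v_0\|\to 0$, so $v_n\to v_0$ strongly in $X$; by continuity of $J_\delta$ and $J'_\delta$ we obtain $J_\delta(v_0)=\lim_n J_\delta(v_n)=C_\delta$ and $J'_\delta(v_0)=0$, and we set $v_\delta:=v_0$. (Note $v_\delta\neq 0$ automatically, since $C_\delta\geq r>0$.) The analytically heavy work has already been absorbed into Lemma~\ref{GCL}, so the only delicate points here are bookkeeping: verifying that the hypotheses of Lemma~\ref{GCL} and Lemma~\ref{SL} hold simultaneously, and confirming $m_{\infty,\delta}>0$ so that $k\,m_{\infty,\delta}<m_{\infty,\delta}$ genuinely forces $k=0$.
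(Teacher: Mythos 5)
Your proposal is correct and follows essentially the same route as the paper: apply the global compactness Lemma \ref{GCL} to the bounded $(PS)_{C_\delta}$ sequence, use the energy splitting together with $J_{\infty,\delta}(w_i)\geq m_{\infty,\delta}$ and the strict inequality $C_\delta<m_{\infty,\delta}$ from Lemma \ref{SL} to force $k=0$, and conclude strong convergence and $J_\delta(v_\delta)=C_\delta$, $J'_\delta(v_\delta)=0$. Your extra bookkeeping (verifying $m_{\infty,\delta}>0$ and the hypotheses of Lemma \ref{SL}) is consistent with, and slightly more explicit than, the paper's argument.
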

\begin{proof}
    By using lemma \ref{GCL}, there exist $v_\delta\in X$, $k\in \N\cup \{0\}$ and $\{w_1,w_2,...w_k\}\subset X$ such that
    \begin{enumerate}[label=(\roman*)]
        \item $v_n\rightharpoonup v_\delta$,\; $J'_\delta(v_\delta)=0$ and $J_\delta(v_\delta)\geq 0$.
        \item $w_i\nequiv 0$ and $J_{\infty,\delta}'(w_i)=0$, for $1\leq i\leq k$.
        \item $J_\delta(v_n)\to J_\delta(v_\delta)+\sum_{i=1}^{k} J_{\infty,\delta}(w_i)$.
    \end{enumerate}
Clearly, $ J_{\infty,\delta}(w_i)\geq m_{\infty,\delta}$. If $k\neq 0$ then $C_\delta\geq m_{\infty,\delta}$, which contradicts the fact that $C_\delta<m_{\infty,\delta}$. Hence, $k=0$. By using lemma \ref{GCL}, we have $v_n\to v_\delta$ in $X$ and $J_\delta(v_\delta)=C_\delta$.
\end{proof}

\begin{cor}\label{cor2}
    If all the assumptions of lemma \ref{ML} are satisfied. Then for almost every $\delta\in I$, there exists $v_\delta\in X$ such that $J_{\delta}(v_\delta)=C_{\delta}$ and $J'_{\delta}(v_\delta)=0$.
\end{cor}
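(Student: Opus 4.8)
The plan is to combine the abstract monotonicity trick of Jeanjean (Theorem \ref{MP}) with the compactness statement of Lemma \ref{ML}. Recall that we have already verified that the family $\{J_\delta\}_{\delta\in I}$ fits the framework of Theorem \ref{MP}: writing $J_\delta(v)=Av-\delta Bv$ with $Av=\frac{1}{p}\int[H(Dv)^p+V(x)|f(v)|^p]\,dx$ and $Bv=\frac{\lambda}{q+1}\int|f(v)|^{q+1}\,dx$, the functional $B$ is nonnegative, $Av\to\infty$ as $\|v\|\to\infty$, one has $\Gamma_\delta\neq\{0\}$ by Lemma \ref{NL}, and the mountain-pass level $C_\delta=\inf_{\gamma\in\Gamma_\delta}\max_{t\in[0,1]}J_\delta(\gamma(t))$ is strictly positive by the geometry established via (\ref{ineq1}). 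Hence Theorem \ref{MP} is applicable to this family on the interval $I$.

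First I would apply Theorem \ref{MP} to obtain a subset $\mathcal{N}\subset I$ of full measure such that for each $\delta\in\mathcal{N}$ there is a sequence $\{v_n\}=\{v_n^\delta\}\subset X$ which is bounded in $X$, satisfies $J_\delta(v_n)\to C_\delta$, and $J'_\delta(v_n)\to 0$ in $X^*$; in other words, $\{v_n\}$ is a bounded $(PS)_{C_\delta}$ sequence for $J_\delta$. The point of invoking Theorem \ref{MP} here, rather than arguing directly, is precisely that it delivers the \emph{boundedness} of the Palais--Smale sequence for free, which is otherwise not guaranteed for this quasilinear functional.

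Next, for each such $\delta\in\mathcal{N}$ I would feed this bounded $(PS)_{C_\delta}$ sequence into Lemma \ref{ML}. Since the standing hypotheses on $N$, $(\alpha,p)$ and $q$ here are exactly those required by Lemma \ref{ML} (namely $(v_1)$, $(v_2)$, $N\geq 3$, $(\alpha,p)\in D^N$ and $2\alpha p-1\leq q<2\alpha p^*-1$), Lemma \ref{ML} produces $v_\delta\in X$ with $J'_\delta(v_\delta)=0$ and $J_\delta(v_\delta)=C_\delta$. Because $\mathcal{N}$ has full measure in $I$, this establishes the claim for almost every $\delta\in I$, which is precisely the assertion of the corollary.

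The genuinely unavoidable restriction is the ``almost every $\delta$'': the monotonicity argument inside Theorem \ref{MP} controls the Palais--Smale sequence only at differentiability points of the monotone map $\delta\mapsto C_\delta$, so the exceptional null set cannot be removed at this stage. This is not an obstacle to the corollary itself, which is simply the chaining of Theorem \ref{MP} and Lemma \ref{ML}; passing from ``a.e.\ $\delta$'' to the value $\delta=1$ needed for (\ref{maineq}) is a separate matter, handled later in the proofs of Theorems \ref{TC} and \ref{TV} by taking $\delta\to 1^-$ along $\mathcal{N}$, using the left-continuity of $\delta\mapsto C_\delta$ from Theorem \ref{MP} together with a further compactness/uniform-boundedness argument.
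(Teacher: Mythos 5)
Your proof is correct and follows exactly the paper's own two-step argument: invoke Theorem \ref{MP} (the Jeanjean monotonicity trick) to obtain, for almost every $\delta\in I$, a bounded $(PS)_{C_\delta}$ sequence, and then feed this into Lemma \ref{ML} to produce the desired critical point at level $C_\delta$. The only difference is that you spell out more of the surrounding context (where the hypotheses of Theorem \ref{MP} were verified, and why the exceptional null set cannot be avoided), but the substance is identical.
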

\begin{proof}
     Theorem \ref{MP} ensures us that for almost every $\delta\in I$, $J_\delta$ has a bounded $(PS)_{C_\delta}$ sequence. Hence by using the above lemma, we get the result.
\end{proof}

\section{Proof of Theorem \ref{TC}}\label{TCP}

Let $l=\inf\{J(v): v\in M\}(>0)$ and $\{v_n\}\subset M$ be a minimizing sequence. Also, $$NJ(v_n)=NJ(v_n)-P(v_n)=\int H(Dv_n)^p dx$$ So, $\{H(Dv_n)\}$ is bounded in $L^p(\R^n)$. We will prove $\{v_n\}$ is bounded in $X$. From (\ref{ineq1}), we have 
\begin{align}\label{AA1}
    \int |(f(v_n))|^{q+1} dx&\leq  C [\int |f(v_n)|^p dx]^\frac{\gamma(q+1)}{p} [\int H(Dv_n)^p dx]^{\frac{p^*}{p}(1-\frac{\gamma(q+1)}{p})}\nonumber\\
    &\leq \epsilon \int |f(v_n)|^p + C_\epsilon (\int H(Dv_n)^p dx)^\frac{p^*}{p} 
\end{align}
where $\epsilon>0$ and $\gamma=\frac{p(2\alpha p^*-q-1)}{(q+1)(2\alpha p^*-p)}.$ Now, by using the Poho$\check{z}$aev identity and (\ref{AA1}), we get
\begin{align*}
    \frac{N}{p}\int V|f(v)|^p dx&=\frac{\lambda N}{q+1}\int |f(v)|^{q+1} dx -\frac{N-p}{p}\int H(Dv)^p dx.\\
    &\leq \frac{\lambda N\epsilon}{q+1} \int |f(v_n)|^p + \frac{\lambda NC_\epsilon}{q+1} (\int H(Dv_n)^p dx)^\frac{p^*}{p} -\frac{N-p}{p}\int H(Dv)^p dx.
\end{align*}

Choose $\epsilon=\frac{V(q+1)}{2p\lambda}$ and by using the fact that $\{H(Dv_n\})$ is bounded in $L^p(\R^n)$, we can conclude that $\{\int V|f(v_n)|^p\}$ is bounded in $\R$. Finally, (\ref{n2}) ensures the boundedness of $\{v_n\}$ in $X$. By lemma \ref{Comp}, up to a subsequence $v_n\rightharpoonup v$ in X and $f(v_n)\rightarrow f(v)$ in $L^{q+1}(\R^n)$. 
Now we will show $v\in M$ and $l=J(v)$. Now,
\begin{align}
     P(v_n)=\frac{N-p}{p}\int H(Dv_n)^p dx+ \frac{N}{p}\int V|f(v_n)|^p dx-\frac{\lambda N}{q+1}\int |f(v_n)|^{q+1} dx=0.
\end{align}
For simplicity, let 
\begin{enumerate}
    \item $a_n=\int H(Dv_n)^p\; dx$, $a=\lim_{n\to\infty}a_n$ and $\bar{a}=\int H(Dv)^p \;dx$.
    \item $b_n=\int V|f(v_n)|^p\; dx$, $b=\lim_{n\to\infty}b_n$ and $\bar{b}=\int V|f(v)|^p\; dx$.
    \item $c_n=\int |f(v_n)|^{q+1}\; dx$, $c=\lim_{n\to\infty}c_n$ and $\bar{c}=\int |f(v)|^{q+1} \;dx$. 
\end{enumerate}
Clearly, $\bar{a}\leq a$, $\bar{b}\leq b$ and $c=\bar{c}$. Our claim is $a=\bar{a}$ and $ b=\bar{b}$. For the time being, let us assume that the claim is true. Now, 
\begin{equation}
    P(v)=\frac{N-p}{p}\int H(Dv)^p dx+ \frac{N}{p}\int V|f(v)|^p dx-\frac{\lambda N}{q+1}\int |f(v)|^{q+1} dx=\lim_{n\to \infty} P(v_n)=0
\end{equation}
and 
\begin{align}
    J(v)=\lim_{n\to\infty} J(v_n)=l>0
\end{align}
So, $v\in X_r\setminus\{0\}$. Hence, $v\in M$ and $J(v)=\inf_{u\in M} J(u)$. Moreover, by (iv) in lemma \ref{NA}, we have $J'(v)=0$. Without loss of generality, we can assume $v$ is non-negative and \cite[ \text{ Proposition 4.3 }]{CM} ensures $v\in C^1(\R^N)$. The function $u=f(v)\in C^1(\R^N)$ is a non-trivial non-negative bounded ground state solution of (\ref{maineq}).\\
Now, we will prove our claim. If the claim is not true then $\bar{a}+\bar{b}<a+b$. Consider the following equations
\begin{align*}
    \frac{1}{p}a+\frac{1}{p}b-\frac{\lambda}{q+1}c=l
\end{align*}
and 
\begin{align*}
    \frac{N-p}{p}a+\frac{N}{p}b-\frac{\lambda N}{q+1}c=0
\end{align*}
Clearly, $c\neq 0$. Define two functions $g_1,g_2:(0, \infty)\to \R$ by 
\begin{align*}
    g_1(t)=\frac{1}{p}\bar{a}t^{N-p}+\frac{1}{p}\bar{b}t^{N}-\frac{\lambda}{q+1}\bar{c}t^{N}
\end{align*}
and 
\begin{align*}
    g_2(t)=\frac{1}{p}at^{N-p}+\frac{1}{p}bt^{N}-\frac{\lambda}{q+1}ct^{N}
\end{align*}
 It is clear that $g_1(t)<g_2(t)$, for all $t>0$. Also, $g_2'(1)=0$ and $g_2(1)=l$. Hence there exists $t_0>0$ such that $g_1(t_0)=\max_{t>0} g_1(t)<l$. Now, consider the function $v_{t_0}(x)=v(\frac{x}{t_0})$, which satisfies $J(v_{t_0})=g_1(t_0)<l$ and $P(v_{t_0})=t_0g_1'(t_0)=0$. Hence, $v_{t_0}\in M$ and $ J(v_{t_0})<l$, which is a contradiction.

\section{Proof of Theorem \ref{TV}}\label{TVP}
Now we are ready to prove our main theorem which we split into two steps:
\begin{enumerate}[label= Step \arabic*:]
\item In this step, our aim is to show the existence of a non-trivial critical point of the functional $J$. By corollary \ref{cor2}, we are allowed to choose a sequence $\delta_n\nearrow 1$ such that for any $n\geq 1$, there exists $v_n\in X\setminus\{0\}$ satisfying
\begin{equation}\label{L1}
    J_{\delta_n}(v_n)=\frac{1}{p}\int H(Dv_n)^p+ V(x)|f(v_n)|^p]dx-\frac{\lambda\delta_n}{q+1}\int |f(v_n)|^{q+1} dx=C_{\delta_n}
\end{equation}
and
\begin{equation*}
        J'_{\delta_n}(v_n)=0.
\end{equation*}
By using lemma \ref{ET} and $\langle J'_{\delta_n}(v_n),\frac{f(v_n)}{f'(v_n)}\rangle=0$, we deduce
\begin{align}\label{L2}
    \int H(Dv_n)^p(2\alpha-F(v_n)) dx +\int V(x)|f(v_n)|^p dx-\lambda\delta_n\int |f(v_n)|^{q+1} dx=0
\end{align}
where $F(v_n)=\frac{2\alpha-1}{1+(2\alpha)^{p-1}|f(v_n)|^{p(2\alpha-1)}}$.
Moreover, $v_n$ satisfies the following Poho$\check{z}$aev identity,
\begin{align}\label{L3}
    \frac{N-p}{p}\int H(Dv_n)^p dx+ \frac{N}{p}\int V(x)|f(v_n)|^p dx&+\frac{1}{p}\int \langle\nabla V(x), x\rangle |f(v_n)|^p dx\nonumber\\
    &-\frac{N\lambda \delta_n}{q+1}\int |f(v_n)|^{q+1} dx=0.
\end{align}
 Multiplying (\ref{L1}) and (\ref{L3}) by $N$ and $r=\frac{q-2\alpha p+1}{2\alpha p}$ respectively and then adding those results we get 
 \begin{align}\label{L4}
      [\frac{N}{p}+\frac{r(N-p)}{p}]\int H(Dv_n)^p dx+ \frac{N}{p}\int V(x)|f(v_n)|^p dx + \frac{r}{p}\int [NV(x)+\langle\nabla V(x), x\rangle]|f(v_n)|^p dx\nonumber\\=\frac{N\lambda\delta_n}{2\alpha p}\int |f(v_n)|^{q+1} dx+NC_{\delta_n}
 \end{align}
From (\ref{L2}) and (\ref{L4}), we deduce
\begin{equation}\label{L5}
\begin{split}
    \frac{r(N-p)}{p}\int H(Dv_n)^p dx+ \frac{N(2\alpha-1)}{2\alpha p}\int V(x)|f(v_n)|^p dx + \frac{r}{p}\int [NV(x)+\langle\nabla V(x), x\rangle]|f(v_n)|^p dx\\ \nonumber+\frac{N}{2\alpha p}\int H(Dv_n)^pF(v_n) dx=NC_{\delta_n}
    \end{split}
\end{equation}
Since $V$ satisfies $(v_2)$, $(\alpha,p)\in D_N$ and $\{C_{\delta_n}\}$ is bounded so (\ref{L5}) ensures the boundedness of $$\{\int H(Dv_n)^p dx+\int V(x)|f(v_n)|^p dx\}_n.$$ Hence $\{v_n\}$ is bounded in $X$. Now,
\begin{equation}\label{L6}
    J(v_n)=J_{\delta_n}(v_n)+\frac{\lambda(\delta_n-1)}{q+1}\int |f(v_n)|^{q+1} dx
    =C_{\delta_n}+\frac{\lambda(\delta_n-1)}{q+1}\int |f(v_n)|^{q+1} dx
\end{equation}
and, 
\begin{equation*}
\begin{split}
    \langle J'(v_n),w\rangle&= \langle J_{\delta_n}(v_n),w\rangle+\frac{\lambda(\delta_n-1)}{q+1}\int |f(v_n)|^{q-1}f(v_n)f'(v_n)w dx\\
    &=\frac{\lambda(\delta_n-1)}{q+1}\int |f(v_n)|^{q-1}f(v_n)f'(v_n)w\;dx
    \end{split}
\end{equation*}
that is, 
\begin{align}\label{L7}
    J'(v_n)=J'_{\delta_n}(v_n)+\frac{\lambda (\delta_n-1)}{q+1} g(v_n)
\end{align}
where $g(v_n)=|f(v_n)|^{q-1}f(v_n)f'(v_n)\in X'$. Since $\{v_n\}$ is bounded in $X$, by Banach-Steinhaus theorem we have $\{g(v_n)\}$ is bounded in $X'$. Using (\ref{L6}), (\ref{L7}), and the left continuity of the map $\delta\to C_\delta$, we obtain
\begin{equation*}
    J(v_n)\to C_1 \text{ as } n\to \infty.
\end{equation*}
and,
\begin{equation*}
    J'(v_n)\to 0 \text{ as } n\to \infty.
\end{equation*}
Hence $\{v_n\}$ is a bounded $(PS)_{C_1}$ sequence for $J$. By the lemma \ref{ML}, there exists $\tilde{v}\in X$ such that $J(\tilde{v})=C_1$ and $J'(\tilde{v})=0$.
\item Let $E=\{ v\in X\setminus\{0\} : J'(v)=0\}$ and $S=\inf_{v\in E} J(v)$. Clearly, $E$ is nonempty and $0\leq S\leq C_1<m_{\infty,1}$. Let $\{v_n\}\subset E$ be a minimizing sequence. Therefore, $J(v_n)\to S$ as $n\to\infty$ and $J'(v_n)=0$, for all $n\in \N$. Using a similar argument as \textbf{Step I}, we can prove that $\{v_n\}$ is a bounded $(PS)_S$ sequence for $J$. Using the argument introduced in the proof of the lemma \ref{SL}, there exists $v_0\in X$ such that $v_n\to v_0$ and $J(v_0)=S$. Without loss of generality, we can assume $v_0$ is nonnegative. We want to prove $v_0\nequiv 0$.\\ Define a map $T: X\to \R$ as 
\begin{equation*}
    T(v):= \int [H(Dv)^p+V(x)|f(v)|^p] dx.
\end{equation*}
which is continuous. If $v_n\to 0$ in $X$ then $T(v_n)\to 0$.\\
Now,
\begin{equation}\label{ineq2}
\begin{split}
    \langle J'(v_n),\frac{f(v_n)}{f'(v_n)}\rangle&= \int H(Dv_n)^p(1+G(v_n)) dx+ \int V(x)|f(v_n)|^p dx-\lambda\int |f(v_n)|^{q+1} dx\\
    &\geq  \int H(Dv_n)^p dx+ \int V(x)|f(v_n)|^p dx-\lambda\int |f(v_n)|^{q+1} dx
    \end{split}
\end{equation}
where $G(v_n)=\frac{(2\alpha-1)(2\alpha)^{p-1}|f(v_n)|^{p(2\alpha-1)}}{1+(2\alpha)^{p-1}|f(v_n)|^{p(2\alpha-1)}}\geq 0.$
Let $$v_n\in S(\beta_n)=\{v \in X: \int [H(Dv)^p+V(x)|f(v)|^p] dx=\beta_n^p\}$$ Using (\ref{ineq1}), (\ref{ineq2}) and $J'(v_n)=0$, we deduce 
\begin{equation*}
\begin{split}
     \beta_n^p=\int H(Dv_n)^p dx+ \int V(x)|f(v_n)|^p dx&\leq \langle J'(v_n),\frac{f(v_n)}{f'(v_n)}\rangle +\lambda\int |f(v_n)|^{q+1} dx\\
     &\leq C\beta_n^m, \text{ where } m>p
     \end{split}
\end{equation*}
Hence, the sequence $\{\beta_n\}$ is bounded below by some positive constant, which contradicts the fact that $T(v_n)\to 0$. Hence, $\tilde{v}=|v_0|$ is a non-trivial non-negative ground state solution of (\ref{maineq2}). By using lemma \ref{Bdd} and \cite[\text{ Proposition 4.3}]{CM}, we have $\tilde{v}$ is bounded and in $C^1(\R^N)$. 
 Thus, $u_0=f(\tilde{v})\in C^1(\R^N)$ is a non-trivial non-negative bounded ground state solution of (\ref{maineq}).
\end{enumerate}
 
\section{Acknowledgement}
We would like to thank Prof. Adimurthi for his invaluable advice and assistance. The first author was supported by MATRICS project no MTR/2020/000594.
\nocite{*}
\bibliographystyle{plain}
\bibliography{Reference.bib}
\end{document}